\newtheorem{theorem}{Theorem}[section]
\newtheorem{lemma}[theorem]{Lemma}
\newtheorem{proposition}[theorem]{Proposition}
\newtheorem{corollary}[theorem]{Corollary}
\theoremstyle{definition}
\newtheorem{definition}[theorem]{Definition}
\newtheorem{example}[theorem]{Example}
\theoremstyle{remark}
\newtheorem{remark}[theorem]{Remark}
\numberwithin{equation}{section}
\newcommand{\f}{\mathscr{F}}
\newcommand{\lr}{\mathcal{L}}
\newcommand{\e}{\mathbb{E}}
\newcommand{\br}{\mathbb{R}}
\newcommand{\pr}{\mathcal{P}}
\newcommand{\dd}{\partial}
\newcommand{\tm}{\tilde{m}}
\newcommand{\hm}{\hat{m}}
\newcommand{\vp}{\varphi}
\newcommand{\brd}{\mathbb{R}^d}
\begin{document}
\title[State-Density Flows of Mean-Field SDEs and Associated PDEs]{State-Density Flows of Non-Degenerate Density-Dependent Mean Field SDEs and Associated PDEs}

\author[Z. Huang]{Ziyu Huang}
\address{School of Mathematical Sciences, Fudan University, Shanghai 200433, China}
\email{zyhuang19@fudan.edu.cn}

\author[S. Tang]{Shanjian Tang}
\address{Department of Finance and Control Sciences, School of Mathematical Sciences, Fudan University, Shanghai 200433, China}
\email{sjtang@fudan.edu.cn}
\thanks{Research partially supported by National Natural Science Foundation of China (Grants No. 11631004 and No. 12031009).}

\subjclass[2020]{Primary 60H30, 60H10; Secondary 35K55}


\keywords{Mean Field SDE, PDE, McKean-Vlasov SDEs, Fokker-Planck equation}

\begin{abstract}
In this paper, we study a combined system of a Fokker-Planck (FP) equation for $m^{t,\mu}$ with initial $(t,\mu)\in[0,T]\times L^2(\mathbb{R}^d)$, and a stochastic differential equation  for $X^{t,x,\mu}$ with initial $(t,x)\in[0,T]\times \mathbb{R}^d$, whose coefficients depend on the solution of FP equation. We develop a combined probabilistic and analytical method to explore the regularity of the functional $V(t,x,\mu)=\mathbb{E}[\Phi(X^{t,x,\mu}_T,m^{t,\mu}(T,\cdot))]$. Our main result states that, under a non-degenerate condition and appropriate regularity assumptions on the coefficients, the function $V$ is the unique classical solution of a nonlocal partial differential equation of mean-field type. The proof depends heavily on the differential properties of the flow $\mu\mapsto (m^{t,\mu}, X^{t,x,\mu})$ over $\mu\in L^2(\mathbb{R}^d)$. We also give an example to illustrate the role of our main result. Finally, we give a discussion on the $L^1$ choice case in the initial $\mu$.
\end{abstract}

\maketitle

\section{Introduction}\label{sec1}
Let $(\Omega,\f,\{\f_t,\ 0\le t\le T\},\mathbb{P})$ denote a complete filtered probability space augmented by all the $\mathbb{P}$-null sets on which a $d$-dimensional Brownian motion $B$ is defined. In this paper, we consider the functional-dependent Fokker-Planck (FP) equation
\begin{equation}\label{pde_m}
	\left\{
	\begin{aligned}
		&\frac{\dd m^{t,\mu}}{\dd s}(s,x)-\sum_{i,j=1}^d\frac{\dd^2}{\dd x_i\dd x_j}[a_{ij}(x,m^{t,\mu}(s,\cdot))m^{t,\mu}(s,x)]\\
		&\qquad+\sum_{i=1}^d\frac{\dd}{\dd x_i}[b_i(x,m^{t,\mu}(s,\cdot))m^{t,\mu}(s,x)]=0,\qquad (s,x)\in(t,T]\times\brd;\\
		&m^{t,\mu}(t,x)=\mu(x),\quad x\in\brd,
	\end{aligned}
	\right.
\end{equation}
where $\mu\in L^2(\brd)$ is the initial probability density, and the dependent stochastic differential equation (SDE)
\begin{equation}\label{sde}
	\left\{
	\begin{aligned}
		&dX^{t,x,\mu}_s=b(X^{t,x,\mu}_s,m^{t,\mu}(s,\cdot))ds+\sigma(X^{t,x,\mu}_s,m^{t,\mu}(s,\cdot))dB_s,\quad s\in(t,T];\\
		&X^{t,x,\mu}_t=x\in\brd,
	\end{aligned}
	\right.
\end{equation}
where
\begin{align*}
	&b:\brd\times L^2(\brd)\to\brd,\quad \sigma:\brd\times L^2(\brd)\to \br^{d\times d},\\
	&b=(b_1,\dots,b_d)^*;\quad\sigma_{ij}=(\sigma)_{ij},\quad a_{ij}=\frac{1}{2}(\sigma\sigma^*)_{ij},\quad 1\le i,j\le d.
\end{align*}
FP equation \eqref{pde_m} can describe the probability density flow of a McKean-Vlasov SDE with the initial probability density $\mu$. For a functional defined on $\brd\times L^2(\brd)$, we develop a combined probabilistic and analytical method to study the differentiability of $(m^{t,\mu},X^{t,x,\mu})$ in the initial $(x,\mu)\in\brd\times L^2(\brd)$, and to explore the regularity of the function $V:[0,T]\times\brd\times L^2(\brd)\to\br$ defined by
\begin{equation*}\label{intr_v}
	V(t,x,\mu)=\e[\Phi(X^{t,x,\mu}_T,m^{t,\mu}(T,\cdot))],
\end{equation*}
where $m^{t,\mu}$ is a solution of FP equation \eqref{pde_m} and $X^{t,x,\mu}$ is a solution of SDE \eqref{sde}. Our main result states that, under a non-degenerate condition and appropriate regularity assumptions, the function $V$ defined above is the unique classical solution of the following partial differential equation (PDE) of mean-field type:
\begin{equation}\label{pde_v}
	\left\{
	\begin{aligned}
		&\frac{\dd V}{\dd t}(t,x,\mu)+\sum_{i=1}^{d}\frac{\dd V}{\dd x_i}(t,x,\mu)b_i(x,\mu)+\sum_{i,j=1}^{d}\frac{\dd^2V}{\dd x_i\dd x_j}(t,x,\mu)a_{ij}(x,\mu)\\
		&\qquad+\int_{\brd}\frac{\dd V}{\dd \mu}(t,x,\mu)(\xi)\Big\{\sum_{i,j=1}^d\frac{\dd^2}{\dd \xi_i\dd \xi_j}[a_{ij}(\xi,\mu(\cdot))\mu(\xi)]\\
		&\qquad-\sum_{i=1}^d\frac{\dd}{\dd \xi_i}[b_i(\xi,\mu(\cdot))\mu(\xi)]\Big\}d\xi=0,\\
		&\qquad(t,x,\mu)\in[0,T)\times\brd\times W^{2,2}(\brd);\\
		&V(T,x,\mu)=\Phi(x,\mu),\quad (x,\mu)\in\brd\times W^{2,2}(\brd).
	\end{aligned}
	\right.
\end{equation}

Our work is based on the derivative in $m\in L^2(\brd)$. Recall that a function $f:L^2(\brd)\to\br$ is differentiable, if there exists a function $L^2(\brd)\times\brd\ni(m,x)\mapsto \frac{\dd f}{\dd m}(m)(x)$ such that $\frac{\dd f}{\dd m}(m)(\cdot)\in L^2(\brd)$ for all $m\in L^2(\brd)$, and for any $\tilde{m}\in L^2(\brd)$,
\begin{equation*}\label{der}
	\frac{d}{d \theta}f(m+\theta \tilde{m})\Big|_{\theta=0}=\int_{\brd} \frac{\dd f}{\dd m}(m)(x)\tilde{m}(x)dx.
\end{equation*}
There are  various studies on the differentiability of functionals of probability distribution, namely, the $L$-derivative introduced by Lions \cite{PL} for functions defined on the space $\pr_2(\brd)$ and the linear functional derivative introduced by Carmona and Delarue \cite{CAR}, where $\pr_2(\br^d)$ denotes the space of probability measures over $\brd$ with finite second-order moments. We refer to \cite[Chapter 5]{CAR} for a discussion of relations between the $L$-derivative and the linear functional derivative. We also refer the reader to Bensoussan et al. \cite{AB4} for a discussion of relations between the differentiability of functions of probability distribution in $\pr_2(\brd)$ and of probability density in $L^2(\brd)$.

Based on the $L$-derivative, PDEs of the form \eqref{pde_v} have been already studied with different approaches and under various settings. We refer to \cite{AB4,BR,CHA,CD}. Buckdahn et al. \cite{BR} use a probabilistic method to investigate PDE \eqref{pde_v} of the probabilistic form
\begin{equation}\label{pde_v0}
	\left\{
	\begin{aligned}
		&\frac{\dd V}{\dd t}(t,x,P_{\xi})+\sum_{i=1}^d\frac{\dd V}{\dd x_i}(t,x,P_\xi)b_i(x,P_\xi)\\
		&\qquad+\frac{1}{2}\sum_{i,j,k=1}^d\frac{\dd^2V}{\dd x_i\dd x_j}(t,x,P_\xi)(\sigma_{ik}\sigma_{jk})(x,P_\xi)\\
		&\qquad+\e\Big[\frac{1}{2}\sum_{i,j,k=1}^d\frac{\dd }{\dd y_i}(\frac{\dd V}{\dd \mu})_j(t,x,P_\xi)(\xi)(\sigma_{ik}\sigma_{jk})(\xi,P_\xi)\\
		&\qquad+\sum_{i=1}^d(\frac{\dd V}{\dd \mu})_i(t,x,P_\xi)(\xi)b_i(\xi,P_\xi)\Big]=0,\\
		&\quad\qquad\qquad(t,x,P_\xi)\in[0,T)\times\brd\times \pr_2(\brd),\\
		&V(T,x,P_\xi)=\Phi(x,P_\xi),\quad (x,P_\xi)\in\brd\times \pr_2(\brd).
	\end{aligned}
	\right.
\end{equation}
They show that, if the coefficients $(b,\sigma)$ and the terminal condition $\Phi$ are twice differentiable in $(x,\mu)$ with bounded Lipschitz derivatives of first- and second-order, there is a unique classical solution of PDE \eqref{pde_v0}. Chassagneux et al. \cite{CHA} analyze a non-linear version of PDE \eqref{pde_v0} and show that such a PDE admits a classical solution on sufficiently small time intervals. They also give general examples such that the solution can be extended to arbitrary large intervals. Crisan and McMurray \cite{CD} prove the existence and uniqueness of the classical solutions of PDE \eqref{pde_v0} when the terminal condition $\Phi$ is not differentiable. Nonetheless, a non-degenerate condition is necessary and the third-order differentiability of the coefficients $(b,\sigma)$ in the distribution variable is required. Based on the linear functional derivative, de Raynal et al. \cite{de} give the existence and uniqueness result of the solutions of PDE \eqref{pde_v0} under the non-degenerate condition when the second-order linear functional derivatives of the coefficients $(b,\sigma)$ in the distribution variable are uniformly H\"older-continuous.

The $L$-derivative and the linear functional derivative can deal with the probability measure which does not admit a density. However, in our setting, the regularity assumption of coefficients is reduced: only the first-order differentiability in $m\in L^2(\brd)$ of coefficients $(b,\sigma)$ is required. We would like to emphasize that, our results still hold when $\mu\in L^2(\brd)$ is not necessarily a probability density. Moreover, a probability distribution with a square-integrable density might go beyond the space of probability measures of finite second-order moments (see Example~\ref{eg1}), and thus our measure variable space $W^{2,2}(\brd)$ in PDE \eqref{pde_v} is not necessarily included in $\pr_2(\brd)$ as it used to be impressed.

The formulation of taking the density as the primal variable in the augmented system of McKean-Vlasov SDEs can be found in Bensoussan et al. \cite{AB0,AB2} and Barbu and R\"ockner \cite{BAR}. The former deals with Mean-Field control problems and Mean-Field games, and the latter concerns the existence of weak solutions of the McKean-Vlasov SDEs for the case of Nemytskii-type coefficients. The density form of the FP equation \eqref{pde_m} on nonlinear Markov processes is justified by modeling natures in many occasions, and also by mathematical conveniences when the diffusion is non-degenerate (which usually allows a probability measure to admit a density) and the FP equation only depends on some characteristics of the probability measure.

In Sections~\ref{PDE1} and \ref{PDE2}, we give some estimates of $m^{t,\mu}$ and study the differentiability of $m^{t,\mu}$ with respect to the initial $\mu\in L^2(\brd)$ in the sense of Fr\'echet differentiability. The derivative is characterized as a  weak solution of a generalized PDE (see \eqref{k}), and the analysis relies on the non-degenerate condition, which is crucial to the regularity analysis of parabolic equations. The regularity of nonlinear PF equations with respect to the measure component is studied by Cardaliaguet et al. \cite{PC1} and, for higher order regularity, in the work of Tse \cite{TSE} for the case where $\sigma$ is a positive constant by using the linear functional derivative. We also refer to recent results on the smoothing properties of McKean-Vlasov SDEs. See \cite{BAN,CD,de1}, and the related result of Chassagneux et al. \cite{CHA1} on the weak error for propagation of chaos.

In Section~\ref{SDE}, we study the differentiability of the flow $(x,\mu)\mapsto X^{t,x,\mu}$ in the Hilbert space $\brd\times L^2(\brd)$. The derivative is characterized as a solution of a SDE which is associated with the derivative of $m^{t,\mu}$ in $\mu\in L^2(\brd)$ (see \eqref{sde_U}). We also refer to Buckdahn et al. \cite{BR} for the study of first- and second-order derivatives of $X^{t,x,\mu}$ with respect to the probability law when $\mu\in\pr_2(\brd)$ is a probability distribution.

The results in Sections~\ref{PDE1}-\ref{SDE} are used in Section~\ref{regularity} to prove the regularity of the value function $V(t,x,\mu)$. Section~\ref{PDE_V} is devoted to an It\^o's formula associated with mean-field problems, and it gives our main result, Theorem~\ref{main} and Corollary~\ref{cor}, stating that the value function $V$ is the unique classical solution of PDE \eqref{pde_v}. In Section~\ref{sec:exp}, we give an example to illustrate the role of our Theorem~\ref{main}. Finally, in Section~\ref{l1}, we give a discussion on the $L^1$ choice case in the initial $\mu$.

\subsection{Notations}
Let $(\Omega,\f,\{\f_t,\ 0\le t\le T\},\mathbb{P})$ denote a complete filtered probability space augmented by all the $\mathbb{P}$-null sets on which a $d$-dimensional Brownian motion $B=(B^1,\dots,B^d)=\{B_t,\ {0\le t\le T}\}$ is defined. Let $L^2(\f_t;\brd)$ denote the set of all $\f_t$-measurable square-integrable $\brd$-valued random variables. Let $\lr^2_{\f}(0,T)$ denote the set of all $\f_t$-progressively-measurable $\brd$-valued processes $\alpha=\{\alpha_t,\ 0\le t\le T\}$ such that $\e\big[\int_0^T |\alpha_t|^2dt\big]<+\infty$. Let $\mathcal{S}^2_{\f}[0,T]$ denote the set of all $\f_t$-progressively-measurable $\brd$-valued processes $\beta=\{\beta_t,\ 0\le t\le T\}$ such that $\e[\sup_{0\le t\le T} |\beta_t|^2]<+\infty$. In this paper, the notation $C(h_1,h_2,\dots,h_n)$ stands for a constant depending only on $(h_1,h_2,\dots,h_n)$.

We introduce the Sobolev spaces \cite{ARA}. Let $p$ be a positive real number. We denote by $L^p(\brd)$ the class of all measurable functions $u$ defined on $\brd$ for which
\begin{equation*}
	\|u\|_{L^p(\brd)}:=\Big(\int_{\brd}|u(x)|^pdx\Big)^{\frac{1}{p}}<+\infty.
\end{equation*}
The space $(L^p(\brd),\|\cdot\|_{L^p(\brd)})$ is a Banach space  \cite[p.29]{ARA} and $C_0^{\infty}(\brd)$ is dense in $L^p(\brd)$ if $1\le p<+\infty$  \cite[p.38]{ARA}. The space $(L^2(\brd),(\cdot,\cdot))$ is a Hilbert space \cite[p.31]{ARA} with respect to the inner product
\begin{equation*}
	(u,v):=\int_{\brd} u(x)v(x)dx,\quad u,v\in L^2(\brd).
\end{equation*}
For any positive integer $m$ and $1\le p<+\infty$, we denote by
\begin{equation*}
	W^{m,p}(\brd):=\{u\in L^p(\brd):D^\alpha u\in L^p(\brd),\  |\alpha|\le m\},
\end{equation*}
where $D^\alpha u$ is the generalized partial derivative. The norm of $u\in W^{m,p}(\brd)$ is defined by
\begin{equation*}
	\|u\|_{W^{m,p}(\brd)}:=\Big(\sum_{0\le |\alpha|\le m}\|D^\alpha u\|_{L^p(\brd)}^p\Big)^{\frac{1}{p}}.
\end{equation*}
The space $(W^{m,p}(\brd),\|\cdot\|_{W^{m,p}(\brd)})$ is a Banach space \cite[p.60]{ARA}.

\section{The Fokker-Planck equation}\label{PDE1}
In this section, we analyze the FP equation \eqref{pde_m} under the following assumptions. For notational convenience, we use the same constant $L>0$ for all the conditions below.

\textbf{(H1)} There exists $\gamma>0$, such that
\begin{align*}
	\sum_{i,j=1}^da_{ij}(x,m)\xi_i\xi_j\geq\gamma|\xi|^2,\quad \forall\xi\in\brd,\quad (x,m)\in\brd\times L^2(\brd).
\end{align*}

\textbf{(H2)} The function $b(\cdot,m):\brd\to\brd$ is differentiable and the function $\sigma(\cdot,m):\brd\to\br^{d\times d}$ is twice differentiable for all $m\in L^2(\brd)$. The functionals $f(x,\cdot):=b_i,\frac{\dd b_i}{\dd x_j},\sigma_{ij},\frac{\dd \sigma_{ij}}{\dd x_k},\frac{\dd^2\sigma_{ij}}{\dd x_k\dd x_l}(x,\cdot):L^2(\brd)\to \br$ have derivatives $\frac{\dd f}{\dd m}(x,m)(\cdot)\in W^{1,2}(\brd)$ for all $(x,m)\in\brd\times L^2(\brd)$ and $1\le i,j,k,l\le d$. The functionals and derivatives are uniformly bounded by $L$. That is,
\begin{align*}
	|f(x,m)|+\|\frac{\dd f}{\dd m}(x,m)(\cdot)\|_{W^{1,2}(\brd)}\le L,\qquad (x,m)\in \brd\times L^2(\brd).
\end{align*}
Moreover, the functionals $f(\cdot,\cdot):\brd\times L^2(\brd)\to \br$ and the derivatives $\frac{\dd f}{\dd m}(\cdot,\cdot):\brd\times L^2(\brd)\to L^2(\brd)$ are $L$-Lipschitz continuous. That is,
\begin{align*}
	&|f(x',m')-f(x,m)|+\|\frac{\dd f}{\dd m}(x',m')(\cdot)-\frac{\dd f}{\dd m}(x,m)(\cdot)\|_{L^2(\brd)}\\
	&\quad\le L\Big(|x'-x|+\|m'-m\|_{L^2(\brd)}\Big),\quad (x,m),\ (x',m') \in \brd\times L^2(\brd).
\end{align*}

The following example shows that a probability distribution admitting a density in $L^2(\brd)$ might be neither in $\pr_2(\brd)$ nor in $\pr_1(\brd)$.
\begin{example}\label{eg1}
	Let $d=1$ and the density be defined as
	\begin{align*}
		\rho(x)=\sum_{n=1}^{\infty}\frac{\pi^2}{6n}{\mathbbm 1}_{[n,n+\frac{1}{n})}(x),\quad x\in\br.
	\end{align*}
	It is easy to check that $\int_\br \rho(x)dx=1$ and $\rho\in L^2(\br)$. However,
	\begin{align*}
		&\int_\br |x|\rho(x)dx\geq \frac{\pi^2}{6}\sum_{n=1}^{\infty}\frac{1}{n}=+\infty,\qquad\int_\br |x|^2\rho(x)dx\geq \frac{\pi^2}{6}\sum_{n=1}^{\infty}1=+\infty.
	\end{align*}
\end{example}

For a differentiable function $F:L^2(\brd)\to\br$ (see Section~\ref{sec1} for the definition), we give the following example.
\begin{example}
	Let $h\in W^{1,2}(\brd)$. We define $F:L^2(\brd)\to \br$ as
	\begin{align*}
		F(m):=\exp(-f(m)^2), \quad 	f(m):=\int_{\brd} h(x)m(x)dx, \quad m\in L^2(\brd).
	\end{align*}
	Note that $|F|\le 1$ and for any $m,\tilde{m}\in L^2(\brd)$,
	\begin{align*}
		\lim_{\theta\to 0}\frac{1}{\theta}[F(m+\theta\tm)-F(m)]=-2f(m)\exp(-f(m)^2)\int_{\brd}h(x)\tm(x)dx,
	\end{align*}
	therefore,
	\begin{align*}
		\frac{\dd F}{\dd m}(m)(\cdot)=-2f(m)\exp(-f(m)^2)h(\cdot), \qquad \forall m\in L^2(\brd).
	\end{align*}
	Moreover, for any $m\in L^2(\brd)$,
	\begin{align*}
		&\|\frac{\dd F}{\dd m}(m)(\cdot)\|_{L^{2}(\brd)}\le \sqrt{2}\exp(-\frac{1}{2})\|h\|_{L^{2}(\brd)}<+\infty,\\
		&\|\frac{\dd F}{\dd m}(m)(\cdot)\|_{W^{1,2}(\brd)}\le \sqrt{2}\exp(-\frac{1}{2})\|h\|_{W^{1,2}(\brd)}<+\infty,
	\end{align*}
	and for any $m,m'\in L^2(\brd)$,
	\begin{align*}
		&|F(m')-F(m)|\le \sqrt{2}\exp(-\frac{1}{2})\|h\|_{L^{2}(\brd)}\|m'-m\|_{L^2(\brd)},\\
		&\|\frac{\dd F}{\dd m}(m')(\cdot)-\frac{\dd F}{\dd m}(m)(\cdot)\|_{L^{2}(\brd)}\le 2\|h\|^2_{L^2(\brd)}\|m'-m\|_{L^2(\brd)}.
	\end{align*}
\end{example}

The following lemma is proved in Appendix~\ref{pf_thm1}.
\begin{lemma}\label{thm1}
	Let Assumptions (H1) and (H2) be satisfied and $\mu\in L^{2}(\brd)$. Then, equation \eqref{pde_m} has a unique solution $m^{t,\mu}\in L^2([t,T];W^{1,2}(\brd))\cap L^{\infty}([t,T];L^{2}(\brd))$. Moreover, if $\mu\in W^{1,2}(\brd)$, then, $m^{t,\mu}\in L^2([t,T];W^{2,2}(\brd))\cap L^{\infty}([t,T];W^{1,2}(\brd))$ and $m^{t,\mu}_s\in L^2([t,T];L^2(\brd))$, and we have the estimates
	\begin{equation}\label{estimate_m}
		\begin{split}
			&\sup_{t\le s\le T}\|m^{t,\mu}(s,\cdot)\|_{L^{2}(\brd)}+\|m^{t,\mu}\|_{L^2([t,T];W^{1,2}(\brd))}
			\le C\big(\gamma,L,T,\|\mu\|_{L^{2}(\brd)}\big);\\
			&\sup_{t\le s\le T}\|m^{t,\mu}(s,\cdot)\|_{W^{1,2}(\brd)}+\|m^{t,\mu}\|_{L^2([t,T];W^{2,2}(\brd))}+\|m^{t,\mu}_{s}\|_{L^2([t,T];L^2(\brd))}\\
			&+\sup_{s\ne s'}\frac{\|m^{t,\mu}(s,\cdot)-m^{t,\mu}(s',\cdot)\|_{L^2(\brd)}}{|s-s'|^{\frac{1}{2}}} \le C\big(\gamma,L,T,\|\mu\|_{W^{1,2}(\brd)}\big).
		\end{split}
	\end{equation}
\end{lemma}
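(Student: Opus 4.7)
The plan is to prove Lemma~\ref{thm1} by a Picard-type fixed-point argument combined with standard $L^{2}$-energy estimates for linear parabolic equations in divergence form. First, for a given $\tilde m\in L^{\infty}([t,T];L^{2}(\brd))$, I would consider the frozen linear FP equation obtained by replacing $m^{t,\mu}(s,\cdot)$ with $\tilde m(s,\cdot)$ inside the coefficients. Assumption~(H2) guarantees that $\tilde a_{ij}(s,x):=a_{ij}(x,\tilde m(s,\cdot))$ and $\tilde b_i(s,x):=b_i(x,\tilde m(s,\cdot))$ are bounded by $L$ and Lipschitz in $x$ uniformly in $s$, while (H1) supplies uniform strict ellipticity. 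Standard linear parabolic theory (Lions--Magenes, or Ladyzhenskaya--Solonnikov--Ural'tseva) then produces a unique weak solution $m\in L^{2}([t,T];W^{1,2}(\brd))\cap L^{\infty}([t,T];L^{2}(\brd))$ starting from $\mu\in L^{2}(\brd)$. Testing the equation against $m$, integrating by parts, and using (H1) yields
\begin{equation*}
\tfrac{1}{2}\tfrac{d}{ds}\|m(s,\cdot)\|^{2}_{L^{2}(\brd)}+\tfrac{\gamma}{2}\|\nabla m(s,\cdot)\|^{2}_{L^{2}(\brd)}\le C(\gamma,L)\|m(s,\cdot)\|^{2}_{L^{2}(\brd)},
\end{equation*}
which, combined with Grönwall, gives the first bound in \eqref{estimate_m} with a constant independent of $\tilde m$.

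Second, applying the same energy identity to the difference $w=m_{1}-m_{2}$ of the solutions associated with two inputs $\tilde m_{1},\tilde m_{2}$ produces a right-hand side of divergence form involving $\tilde a^{1}_{ij}-\tilde a^{2}_{ij}$ and $\tilde b^{1}_{i}-\tilde b^{2}_{i}$ multiplied by $m_{2}$ and its derivatives. By (H2) each coefficient difference is dominated pointwise in $x$ by $L\,\|\tilde m_{1}(s)-\tilde m_{2}(s)\|_{L^{2}(\brd)}$, so after integration by parts and use of the already established $W^{1,2}$-bound on $m_{2}$ I would obtain
\begin{equation*}
\|w(s)\|^{2}_{L^{2}(\brd)}+\int_{t}^{s}\|\nabla w(\tau,\cdot)\|^{2}_{L^{2}(\brd)}\,d\tau\le C\int_{t}^{s}\|\tilde m_{1}(\tau)-\tilde m_{2}(\tau)\|^{2}_{L^{2}(\brd)}\,d\tau.
\end{equation*}
This makes the solution map $\tilde m\mapsto m$ a contraction on $L^{\infty}([t,t+\delta];L^{2}(\brd))$ for some $\delta=\delta(\gamma,L)>0$ independent of $\mu$, and patching yields a unique global fixed point on $[t,T]$, settling the first half of the lemma.

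Third, when $\mu\in W^{1,2}(\brd)$, I would differentiate the equation in $x_{k}$ (rigorously via difference quotients) to obtain a divergence-form equation for $\partial_{k}m^{t,\mu}$ whose additional source terms involve only $\partial_{k}a_{ij}$, $\partial_{k}b_{i}$ and the already controlled $m^{t,\mu}$ and $\nabla m^{t,\mu}$. The same energy estimate then delivers $\partial_{k}m^{t,\mu}\in L^{\infty}([t,T];L^{2}(\brd))\cap L^{2}([t,T];W^{1,2}(\brd))$, which is equivalent to $m^{t,\mu}\in L^{\infty}([t,T];W^{1,2})\cap L^{2}([t,T];W^{2,2})$. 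Reading the equation as an identity between functions, I then get $m^{t,\mu}_{s}\in L^{2}([t,T];L^{2}(\brd))$, and the $\tfrac{1}{2}$-Hölder bound of $s\mapsto m^{t,\mu}(s,\cdot)$ in $L^{2}(\brd)$ follows from Cauchy--Schwarz applied to $m^{t,\mu}(s,\cdot)-m^{t,\mu}(s',\cdot)=\int_{s'}^{s}m^{t,\mu}_{\tau}\,d\tau$.

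The step I expect to be the main obstacle is the contraction estimate: because the coefficients depend nonlocally on the whole function $\tilde m(s,\cdot)$ and the source term appearing in the equation for $w$ is in double-divergence form, one must carefully balance the two integrations by parts against the $L^{\infty}_{x}$-smallness of coefficient differences provided by (H2) and against the $W^{1,2}$-control of $m_{2}$ made available by the non-degeneracy (H1). Once this step is secured on a time step whose length depends only on $(\gamma,L)$, the remaining claims follow by standard parabolic bootstrapping and routine iteration.
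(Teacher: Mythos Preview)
Your proposal is correct and uses standard energy-method ingredients. It differs from the paper's proof in two places worth noting. For existence and uniqueness, the paper invokes the Schauder fixed-point theorem (deferring to the literature) rather than a Banach contraction; your contraction argument is fine, but the time step $\delta$ should be allowed to depend on $\|\mu\|_{L^{2}(\brd)}$ as well, since the source term in the equation for $w$ carries $\nabla m_{2}$, whose $L^{2}$-in-time bound scales with $\|\mu\|_{L^{2}(\brd)}$---this is harmless because the a priori $L^{\infty}_{t}L^{2}_{x}$ estimate propagates uniformly and the iteration still closes. For the higher-order estimate when $\mu\in W^{1,2}(\brd)$, the paper does not differentiate the equation in $x$; instead it multiplies \eqref{pde_m} by $a(x,m(s,\cdot))^{-1}m_{s}$ and integrates, obtaining $\sup_{s}\|m_{x}(s,\cdot)\|_{L^{2}}$ and $\|m_{s}\|_{L^{2}_{s,x}}$ in a single step, and then reads off $\|m_{xx}\|_{L^{2}_{s,x}}$ directly from the equation. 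Your route (differentiate in $x_{k}$, apply the basic energy estimate to $\partial_{k}m$, then recover $m_{s}$ from the equation) is equally valid and arguably more transparent, at the cost of one extra step. Both proofs derive the $\tfrac{1}{2}$-H\"older bound in $s$ the same way, via Cauchy--Schwarz on $\int_{s'}^{s}m_{\tau}\,d\tau$.
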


Now we discuss the continuous dependence of $m^{t,\mu}$ on $\mu$. The following proposition is proved in Appendix~\ref{pf_lemma6}.
\begin{proposition}\label{lemma6}
	Let Assumptions (H1) and (H2) be satisfied, and $\mu,\mu'\in W^{1,2}(\brd)$. Let $m^{t,\mu}$ and $m^{t,\mu'}$ be solutions of equation \eqref{pde_m} with initials $\mu$ and $\mu'$, respectively, such that \eqref{estimate_m} holds true. Then,
	\begin{equation}\label{lem6}
		\begin{split}
			&\sup_{t\le s\le T}\|m^{t,\mu'}(s,\cdot)-m^{t,\mu}(s,\cdot)\|_{L^{2}(\brd)}+\|m^{t,\mu'}-m^{t,\mu}\|_{L^2([t,T];W^{1,2}(\brd))}\\
			&\qquad\le C\big(\gamma,L,T,\|\mu\|_{W^{1,2}(\brd)}\big)\|\mu'-\mu\|_{L^{2}(\brd)};\\
			&\sup_{t\le s\le T}\|m^{t,\mu'}(s,\cdot)-m^{t,\mu}(s,\cdot)\|_{W^{1,2}(\brd)}+\|m^{t,\mu'}-m^{t,\mu}\|_{L^2([t,T];W^{2,2}(\brd))}\\
			&\qquad \le C\big(\gamma,L,T,\|\mu\|_{W^{1,2}(\brd)}\big)\|\mu'-\mu\|_{W^{1,2}(\brd)}.
		\end{split}
	\end{equation}
\end{proposition}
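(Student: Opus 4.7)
The plan is to show that $w := m^{t,\mu'} - m^{t,\mu}$ satisfies a linear parabolic equation of FP type with a forcing term, and to close both inequalities in \eqref{lem6} via energy estimates and Gronwall's inequality. Using the elementary splitting
\begin{equation*}
a_{ij}(x,m')m' - a_{ij}(x,m)m = a_{ij}(x,m')w + \bigl[a_{ij}(x,m')-a_{ij}(x,m)\bigr]m,
\end{equation*}
together with the analogous identity for $b_i$, subtracting the two versions of \eqref{pde_m} yields
\begin{equation*}
\frac{\dd w}{\dd s} - \sum_{i,j=1}^d\frac{\dd^2}{\dd x_i\dd x_j}[a_{ij}(x,m')w] + \sum_{i=1}^d\frac{\dd}{\dd x_i}[b_i(x,m')w] = F(s,x),
\end{equation*}
with $w(t,\cdot) = \mu'-\mu$, where $F$ collects the remaining terms in divergence form. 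By the Lipschitz bound $|a_{ij}(x,m')-a_{ij}(x,m)| \le L\|w(s,\cdot)\|_{L^2(\brd)}$ from (H2), together with its analogues for $x$-derivatives up to order two, the Sobolev norms of $F$ at time $s$ are controlled by $\|w(s,\cdot)\|_{L^2(\brd)}$ times Sobolev norms of $m(s,\cdot)$ up to $W^{2,2}(\brd)$, the latter handled by Lemma~\ref{thm1}.

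For the first inequality in \eqref{lem6}, I would test the equation for $w$ against $w$ itself, integrate by parts once in the principal part, and use (H1) to extract a coercive term $\gamma\sum_i\|\frac{\dd w}{\dd x_i}(s,\cdot)\|_{L^2(\brd)}^2$. The remaining lower-order contributions, together with $\int F(s,\cdot)w(s,\cdot)\,dx$ integrated by parts once, are dominated by
\begin{equation*}
C\bigl(1+\|m(s,\cdot)\|_{W^{1,2}(\brd)}\bigr)\|w(s,\cdot)\|_{L^2(\brd)}\,\Bigl(\sum_i\|\tfrac{\dd w}{\dd x_i}(s,\cdot)\|_{L^2(\brd)}^2\Bigr)^{1/2}.
\end{equation*}
Young's inequality absorbs half of the coercive term and produces
\begin{equation*}
\tfrac{d}{ds}\|w(s,\cdot)\|_{L^2(\brd)}^2 + \tfrac{\gamma}{2}\sum_i\|\tfrac{\dd w}{\dd x_i}(s,\cdot)\|_{L^2(\brd)}^2 \le C\bigl(1+\|m(s,\cdot)\|_{W^{1,2}(\brd)}^2\bigr)\|w(s,\cdot)\|_{L^2(\brd)}^2.
\end{equation*}
Since $s\mapsto\|m(s,\cdot)\|_{W^{1,2}(\brd)}^2$ is bounded on $[t,T]$ by Lemma~\ref{thm1}, Gronwall's inequality followed by integration in $s$ yields the first line of \eqref{lem6}.

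For the second inequality, I would differentiate the $w$-equation once in $x_k$, obtain an analogous parabolic equation for $\frac{\dd w}{\dd x_k}$, and test against $\frac{\dd w}{\dd x_k}$. The principal part is treated exactly as above, while the new source $\frac{\dd F}{\dd x_k}$, tested against $\frac{\dd w}{\dd x_k}$ and integrated by parts once, produces contributions bounded by $C\|m(s,\cdot)\|_{W^{2,2}(\brd)}\|w(s,\cdot)\|_{L^2(\brd)}\cdot(\sum_{i,k}\|\frac{\dd^2 w}{\dd x_i\dd x_k}(s,\cdot)\|_{L^2(\brd)}^2)^{1/2}$, once again absorbable via Young's inequality into the second-order coercive term. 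Combining with the $L^2$-estimate already proved and with the integrability of $s\mapsto\|m(s,\cdot)\|_{W^{2,2}(\brd)}^2$ on $[t,T]$ from Lemma~\ref{thm1}, a second Gronwall argument delivers the $\sup_s W^{1,2}$ bound and the $L^2([t,T];W^{2,2})$ bound on $w$ in terms of $\|\mu'-\mu\|_{W^{1,2}(\brd)}$.

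The main obstacle will be the careful distribution of $x$-derivatives across products involving the coefficient difference $a_{ij}(x,m')-a_{ij}(x,m)$ in the $W^{1,2}$ estimate: derivatives hitting $m$ produce terms controllable by its higher Sobolev regularity from Lemma~\ref{thm1}, whereas derivatives hitting the Lipschitz factor itself require both (H2)'s uniform $W^{1,2}(\brd)$ bound on $\frac{\dd f}{\dd m}(x,m)(\cdot)$ and the Lipschitz continuity of $\partial^\alpha_x a_{ij}$ in $m$ up to $|\alpha|=2$. Once this bookkeeping is in place, the non-degeneracy (H1) allows each critical cross term to be absorbed into the principal coercive term without any loss of derivatives, and the two inequalities in \eqref{lem6} follow.
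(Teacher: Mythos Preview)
Your approach to the first inequality is exactly the paper's: write the equation for $\Delta m:=m^{t,\mu'}-m^{t,\mu}$, test against $\Delta m$, use (H1) for coercivity, bound the forcing via the Lipschitz property of the coefficients in $m$ together with $\sup_s\|m^{t,\mu}(s,\cdot)\|_{W^{1,2}}$ from Lemma~\ref{thm1}, and close by Gronwall.

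For the second inequality you take a genuinely different route. The paper does \emph{not} differentiate the $\Delta m$-equation in $x$; instead it tests the undifferentiated equation against $a(x,m^{t,\mu'}(s,\cdot))^{-1}\,\Delta m_s$. This produces $\tfrac{1}{L}\|\Delta m_s\|_{L^2}^2+\tfrac{d}{ds}\|\Delta m_x\|_{L^2}^2$ on the left, and the right-hand side is controlled by $\|\Delta m_x\|_{L^2}^2$ plus $(1+\|m^{t,\mu}\|_{W^{2,2}}^2)\|\Delta m\|_{L^2}^2$, the latter handled via the first estimate and the $L^2_s W^{2,2}_x$ bound on $m^{t,\mu}$ from Lemma~\ref{thm1}. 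Gronwall then gives $\sup_s\|\Delta m_x\|_{L^2}$ and, as a by-product, $\|\Delta m_s\|_{L^2([t,T];L^2)}$; finally $\|\Delta m_{xx}\|_{L^2([t,T];L^2)}$ is read off pointwise from the PDE itself. Your approach---differentiate in $x_k$ and test against $\partial_{x_k}w$---is equally valid and yields the $L^2_sW^{2,2}_x$ bound directly from coercivity, at the cost of slightly heavier bookkeeping when $x$-derivatives land on the coefficients. The paper's choice avoids differentiating the equation and delivers the time-derivative bound for free; yours is more symmetric with the $L^2$ step. One small remark: in your final paragraph the $W^{1,2}$ bound on $\frac{\partial f}{\partial m}(x,m)(\cdot)$ is not actually what is used here---what you need is simply the Lipschitz continuity in $m$ of $\partial_x^\alpha a_{ij}$ for $|\alpha|\le 2$, which (H2) provides directly.
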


In the rest of this section, we discuss the higher regularity of the solution $m^{t,\mu}$ of equation \eqref{pde_m} under some higher regularity assumptions. The following assumption is the regularity-enhanced version of Assumption (H2).

\textbf{(H2')} The functions $b$ and $\sigma$ satisfy (H2). Moreover, $b(\cdot,m):\brd\to\brd$ is twice differentiable and $a(\cdot,m):\brd\to\br^{d\times d}$ is three times differentiable for all $m\in L^2(\brd)$, with the derivatives being bounded by $L$. That is,
\begin{equation*}
	\begin{split}
		&|\frac{\partial^2 b}{\partial x_i\dd x_j}(x,m)|+ |\frac{\partial^3 a}{\partial x_i\dd x_j\dd x_k}(x,m)| \le L, \quad (x,m)\in \brd\times L^2(\brd),\quad 1\le i,j,k\le d.
	\end{split}
\end{equation*}

The following proposition is proved in Appendix~\ref{pf_thm1'}.
\begin{proposition}\label{thm1'}
	Let Assumptions (H1) and (H2) be satisfied, and $\mu,\mu'\in W^{2,2}(\brd)$. Let $m^{t,\mu}$ and $m^{t,\mu'}$ be solutions of equation \eqref{pde_m} with initials $\mu$ and $\mu'$, respectively, such that \eqref{estimate_m} holds true. Then,
	\begin{align}
		&\sup_{t\le s\le T}\|m^{t,\mu}_s(s,\cdot)\|_{L^2(\brd)}+\sup_{t\le s\le T}\| m^{t,\mu}(s,\cdot)\|_{W^{2,2}(\brd)}\label{thm1'_1}\\
		&+\|m^{t,\mu}_{s}\|_{L^2([t,T];W^{1,2}(\brd))}\le C\big(\gamma,L,T,\|\mu\|_{W^{2,2}(\brd)}\big);\notag\\
		&\sup_{t\le s\le T}\|m_s^{t,\mu'}(s,\cdot)-m_s^{t,\mu}(s,\cdot)\|_{L^{2}(\brd)}+\|m_{s}^{t,\mu'}-m^{t,\mu}_{s}\|_{L^2([t,T];W^{1,2}(\brd))}\label{thm1'_2}\\
		&+\sup_{t\le s\le T}\|m^{t,\mu'}(s,\cdot)-m^{t,\mu}(s,\cdot)\|_{W^{2,2}(\brd)}\notag\\
		&\qquad\le C\big(\gamma,L,T,\|\mu\|_{W^{2,2}(\brd)},\|\mu'\|_{W^{2,2}(\brd)}\big)\|\mu'-\mu\|^2_{W^{2,2}(\brd)}.\notag
	\end{align}
	Further, if Assumption (H2') is satisfied, then,
	\begin{equation}\label{thm1''_1}
		\begin{split}
			&\|m^{t,\mu}\|_{L^2([t,T];W^{3,2}(\brd))}+\sup_{s\neq s'}\frac{\|m^{t,\mu}(s',\cdot)-m^{t,\mu}(s,\cdot)\|_{W^{1,2}(\brd)}}{|s'-s|^{\frac{1}{2}}}\\
			&\qquad\le C(\gamma,L,T,\|\mu\|_{W^{2,2}(\brd)});
		\end{split}
	\end{equation}
	if moreover $\mu\in W^{3,2}(\brd)$, then,
	\begin{align}
		&\sup_{t\le s\le T}\|m^{t,\mu}_{s}(s,\cdot)\|_{W^{1,2}(\brd)}+\sup_{t\le s\le T}\|m^{t,\mu}(s,\cdot)\|_{W^{3,2}(\brd)}\label{cor2_1}\\
		&+\|m^{t,\mu}_{ss}\|_{L^2([t,T];L^2(\brd))}+\|m^{t,\mu}_{s}\|_{L^2([t,T];W^{2,2}(\brd))}\notag\\
		&\qquad\le C\big(\gamma,L,T,\|\mu\|_{W^{3,2}(\brd)}\big);\notag\\
		&\sup_{s\neq s'}\frac{\|m^{t,\mu}_s(s',\cdot)-m^{t,\mu}_s(s,\cdot)\|_{L^2(\brd)}}{|s'-s|^{\frac{1}{2}}}+\sup_{s\neq s'}\frac{\|m^{t,\mu}(s',\cdot)-m^{t,\mu}(s,\cdot)\|_{W^{2,2}(\brd)}}{|s'-s|^{\frac{1}{2}}}\label{thm1'''_1}\\
		&\qquad\le C(\gamma,L,T,\|\mu\|_{W^{3,2}(\brd)}).\notag
	\end{align}
\end{proposition}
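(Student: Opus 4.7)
The plan is to establish the estimates in Proposition~\ref{thm1'} by iterating parabolic $L^2$ energy estimates on spatially differentiated versions of \eqref{pde_m}, using Lemma~\ref{thm1} and Proposition~\ref{lemma6} as the base case of the bootstrap. A preparatory step is to expand \eqref{pde_m} into its non-divergence form
\begin{equation*}
m^{t,\mu}_s = \sum_{i,j=1}^d a_{ij}(x, m^{t,\mu}(s,\cdot)) m^{t,\mu}_{x_i x_j} + \sum_{i=1}^d \tilde B_i(x, m^{t,\mu}(s,\cdot)) m^{t,\mu}_{x_i} + C(x, m^{t,\mu}(s,\cdot)) m^{t,\mu},
\end{equation*}
where, under (H2), the coefficients $\tilde B_i$ and $C$ (arising from collecting the first and second $x$-derivatives of $a_{ij}$ and $b_i$ in the divergence) are uniformly bounded by a constant depending only on $L$, and $(a_{ij})$ is uniformly coercive by (H1).

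For \eqref{thm1'_1}, I would differentiate this equation in $x_k$ to obtain a linear parabolic equation for $v_k := m^{t,\mu}_{x_k}$; the added source is a sum of products of bounded $x$-derivatives of the coefficients with $m^{t,\mu}$, $\nabla m^{t,\mu}$ and $\nabla^2 m^{t,\mu}$, all of which already sit in $L^2([t,T];L^2(\brd))$ by Lemma~\ref{thm1}. Testing successively against $v_k$ and $-\Delta v_k$, absorbing the leading terms via (H1), and applying Gronwall, I obtain the $L^\infty([t,T];W^{1,2})\cap L^2([t,T];W^{2,2})$ control on each $v_k$, and hence the $\sup_s \|m^{t,\mu}(s,\cdot)\|_{W^{2,2}}$ bound. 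The $\sup_s\|m^{t,\mu}_s\|_{L^2}$ bound is then read off the non-divergence form, and $\|m^{t,\mu}_s\|_{L^2([t,T];W^{1,2})}$ follows from the PDE satisfied by $\partial_{x_l}(m^{t,\mu}_s) = \partial_s v_l$ together with the just-obtained regularity of $v_k$. For \eqref{thm1'_2}, subtracting the two PDEs gives a linear equation for $\delta m := m^{t,\mu'} - m^{t,\mu}$ whose source carries the extra factors $a_{ij}(\cdot, m') - a_{ij}(\cdot, m)$ and $b_i(\cdot, m') - b_i(\cdot, m)$, each of size $O(\|\delta m(s,\cdot)\|_{L^2})$ by the Lipschitz clause of (H2); feeding Proposition~\ref{lemma6} into this equation and running the same energy argument yields the stated difference estimate, the quadratic appearance of $\|\mu'-\mu\|_{W^{2,2}}$ on the right-hand side reflecting the natural squared form of the resulting Gronwall inequality.

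For the higher-regularity claims \eqref{thm1''_1}, \eqref{cor2_1}, and \eqref{thm1'''_1} available under (H2') (and, where relevant, $\mu\in W^{3,2}(\brd)$), I would differentiate the equation one further time in $x$; the additional $x$-differentiability of $b$ (twice) and of $a$ (three times) granted by (H2') is exactly what is required to keep the new source terms in $L^2_{s,x}$. The bound on $m^{t,\mu}_{ss}$ is obtained by differentiating the non-divergence form in $s$ and testing against $m^{t,\mu}_s$, using the $W^{3,2}$ spatial control already derived. All the $C^{1/2}$-in-$s$ statements reduce to
\begin{equation*}
m^{t,\mu}(s',\cdot) - m^{t,\mu}(s,\cdot) = \int_s^{s'} m^{t,\mu}_\tau(\tau,\cdot)\,d\tau
\end{equation*}
combined with Cauchy-Schwarz, using whichever $L^2([t,T]; W^{k,2})$ bound on $m^{t,\mu}_s$ matches the norm on the left. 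I expect the principal technical difficulty to be the bookkeeping for the source terms once the PDE has been differentiated two or three times in $x$: many products of derivatives of the coefficients with derivatives of $m^{t,\mu}$ appear, and each must be placed in $L^2_{s,x}$ (occasionally via a Sobolev embedding in $x$) with a constant compatible with the closing Gronwall step; this algebra is routine but lengthy, which is presumably why the authors relegate the full argument to Appendix~\ref{pf_thm1'}.
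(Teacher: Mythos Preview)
Your plan diverges from the paper's in a way that creates a genuine gap for \eqref{thm1'_1} and \eqref{thm1'_2}. When you pass to the non-divergence form
\[
m^{t,\mu}_s = \sum_{i,j} a_{ij}\,m^{t,\mu}_{x_ix_j} + \sum_i \tilde B_i\,m^{t,\mu}_{x_i} + C\,m^{t,\mu},
\]
the zeroth-order coefficient is $C=\sum_{i,j}\partial_{x_ix_j}a_{ij}-\sum_i\partial_{x_i}b_i$. Under (H2) one has $a\in C^2_x$ and $b\in C^1_x$, so $C$ is merely bounded; the term $(\partial_{x_k}C)\,m$ that appears after you differentiate in $x_k$ involves \emph{third} $x$-derivatives of $a$ and \emph{second} $x$-derivatives of $b$, neither of which is controlled by (H2). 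Thus your claim that ``the added source is a sum of products of bounded $x$-derivatives of the coefficients'' fails precisely here, and the energy step testing against $-\Delta v_k$ (or $a^{-1}v_{k,s}$) cannot be closed. Working instead in divergence form does not rescue the argument: the source becomes $\partial_{x_i}G_{k,i}$ with $G_{k,i}\in L^\infty_sL^2_x$ only, which yields no more than $v_k\in L^\infty_sL^2_x\cap L^2_sW^{1,2}_x$---exactly what Lemma~\ref{thm1} already gives.

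The paper sidesteps this by differentiating \eqref{pde_m} in $s$ rather than in $x$: the resulting equation for $m_s$ (their \eqref{prop1'_1}) picks up the nonlocal term $\int\frac{\partial a_{ij}}{\partial m}(x,m(s,\cdot))(\xi)\,m_s(s,\xi)\,d\xi$, which needs only the $m$-derivative of $a$ (bounded under (H2)), not an extra $x$-derivative. Testing this equation against $m_s$ gives $\sup_s\|m_s\|_{L^2}$ and $\|m_{sx}\|_{L^2_{s,x}}$ directly, with initial value $\|m_s(t,\cdot)\|_{L^2}\le C(L)\|\mu\|_{W^{2,2}}$; the $\sup_s\|m_{xx}\|_{L^2}$ bound is then read off from the equation, not the other way round. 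The difference estimate \eqref{thm1'_2} follows the same pattern after subtracting and differentiating in $s$. Your $x$-differentiation strategy does become viable once (H2${}'$) is in force (so $a\in C^3_x$, $b\in C^2_x$), and indeed the paper switches to it at that point to prove \eqref{thm1''_1} and \eqref{cor2_1}; your description of the H\"older-in-$s$ estimates via Cauchy--Schwarz matches the paper's argument.
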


\begin{remark}
	Rich results on (local) FP equations which are second-order quasilinear parabolic PDEs of a divergent-form are available in Ladyženskaja et al. \cite{LO}. FP equation \eqref{pde_m} is nonlocal for  the coefficients $\{a_{ij},b_i,\ 1\le i,j\le n\}$ depend on the unknown in a functional way.
\end{remark}

\section{Derivative of $m^{t,\mu}$}\label{PDE2}
In this section, we compute the derivative of $m^{t,\mu}$ with respect to $\mu$. Let $\tilde{\mu}\in L^2(\brd)$, which is considered to be a "direction", and define
\begin{equation*}\label{mu_h:def}
	\mu_h:=\mu+h\tilde{\mu},\qquad h\in[0,1].
\end{equation*}
Let $m^{t,\mu_h}$ be the solution of equation \eqref{pde_m} corresponding to initial $\mu_h$, and set
\begin{equation}\label{m_h:def}
	\tilde{m}_h^{t,\mu}(\tilde{\mu}):=\frac{m^{t,\mu_h}-m^{t,\mu}}{h},\qquad h\in(0,1].
\end{equation}
Then, $\tilde{m}^{t,\mu}_h(\tilde{\mu})(t,x)=\tilde{\mu}(x)$ for $x\in\brd$, and for $(s,x)\in(t,T]\times\brd$,
\begin{align*}
	&\frac{\dd \tilde{m}^{t,\mu}_h(\tilde{\mu})}{\dd s}(s,x)-\sum_{i,j=1}^d\frac{\dd^2}{\dd x_i\dd x_j}\Big[a_{ij}(x,m^{t,\mu_h}(s,\cdot))\tilde{m}^{t,\mu}_h(\tilde{\mu})(s,x)\\
	&\qquad+\frac{1}{h}[a_{ij}(x,m^{t,\mu_h}(s,\cdot))-a_{ij}(x,m^{t,\mu}(s,\cdot))]m^{t,\mu}(s,x)\Big]\\
	&\qquad+\sum_{i=1}^d\frac{\dd}{\dd x_i}\Big[b_i(x,m^{t,\mu_h}(s,\cdot))\tilde{m}^{t,\mu}_h(\tilde{\mu})(s,x)\\
	&\qquad+\frac{1}{h}[b_i(x,m^{t,\mu_h}(s,\cdot))-b_i(x,m^{t,\mu}(s,\cdot))]m^{t,\mu}(s,x)\Big]=0.
\end{align*}
From Assumptions (H1) and (H2) and standard arguments for second-order parabolic equations, for $\mu\in W^{1,2}(\brd)$, we have the following estimate:
\begin{equation}\label{lem3_1}
	\begin{split}
		&\sup_{t\le s\le T}\|\tilde{m}^{t,\mu}_h(\tilde{\mu})(s,\cdot)\|_{L^{2}(\brd)}+\|\tilde{m}^{t,\mu}_h(\tilde{\mu})\|_{L^2([t,T];W^{1,2}(\brd))}\\
		&\qquad\le C\big(\gamma,L,T,\|\mu\|_{W^{1,2}(\brd)}\big)\|\tilde{\mu}\|_{L^2(\brd)}.
	\end{split}
\end{equation}
Along the direction $\tilde{\mu}$, the directional derivative of $m^{t,\mu}$ with respect to $\mu$ can be formulated as the solution of the following PDE:
\begin{equation}\label{tm}
	\left\{
	\begin{aligned}
		&\frac{\dd \tilde{m}^{t,\mu}(\tilde{\mu})}{\dd s}(s,x)-\sum_{i,j=1}^d\frac{\dd^2}{\dd x_i\dd x_j}\Big[a_{ij}(x,m^{t,\mu}(s,\cdot))\tilde{m}^{t,\mu}(\tilde{\mu})(s,x)\\
		&\qquad+\int_{\brd}\frac{\dd a_{ij}}{\dd m}(x,m^{t,\mu}(s,\cdot))(\xi)\tm^{t,\mu}(\tilde{\mu})(s,\xi)d\xi \cdot m^{t,\mu}(s,x)\Big]\\
		&\qquad+\sum_{i=1}^d\frac{\dd}{\dd x_i}\Big[b_i(x,m^{t,\mu}(s,\cdot))\tilde{m}^{t,\mu}(\tilde{\mu})(s,x)\\
		&\qquad+\int_{\brd}\frac{\dd b_i}{\dd m}(x,m^{t,\mu}(s,\cdot))(\xi)\tm^{t,\mu}(\tilde{\mu})(s,\xi)d\xi \cdot m^{t,\mu}(s,x)\Big]=0,\\
		&\qquad\qquad\qquad\qquad\qquad\qquad\qquad\qquad(s,x)\in(t,T]\times\brd,\\
		&\tilde{m}^{t,\mu}(\tilde{\mu})(t,x)=\tilde{\mu}(x),\quad x\in\brd.
	\end{aligned}
	\right.
\end{equation}
The following lemma is proved in Appendix~\ref{pf_thm4}.
\begin{lemma}\label{thm4}
	Let Assumptions (H1) and (H2) be satisfied, $\mu\in W^{1,2}(\brd)$ and $\tilde{\mu}\in L^2(\brd)$. Let $m^{t,\mu}$ be the solution of equation \eqref{pde_m} such that \eqref{estimate_m} holds true. Then, equation \eqref{tm} has a unique solution $\tm^{t,\mu}(\tilde{\mu})\in C^0([t,T];L^2(\brd))$, such that
	\begin{equation}\label{estimate_tm1}
		\begin{split}
			&\sup_{t\le s\le T}\|\tm^{t,\mu}(\tilde{\mu})(s,\cdot)\|_{L^{2}(\brd)}+\|\tm^{t,\mu}(\tilde{\mu})\|_{L^2([t,T];W^{1,2}(\brd))}\\
			&\qquad\le C\big(\gamma,L,T,\|\mu\|_{W^{1,2}(\brd)}\big)\|\tilde{\mu}\|_{L^2(\brd)}.
		\end{split}
	\end{equation}
	For any $\mu,\mu'\in W^{1,2}(\brd)$, we have the estimate
	\begin{equation}\label{estimate_tm2}
		\begin{split}
			&\sup_{t\le s\le T}\|\tm^{t,\mu'}(\tilde{\mu})(s,\cdot)-\tm^{t,\mu}(\tilde{\mu})(s,\cdot)\|_{L^{2}(\brd)}\\
			&\qquad\le C\big(\gamma,L,T,\|\mu\|_{W^{1,2}(\brd)},\|\mu'\|_{W^{1,2}(\brd)}\big)\|\tilde{\mu}\|_{L^{2}(\brd)}\|\mu'-\mu\|_{L^2(\brd)}.
		\end{split}
	\end{equation}
\end{lemma}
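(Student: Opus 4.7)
The plan is to construct $\tm^{t,\mu}(\tilde{\mu})$ as the $h\downarrow 0$ limit of the difference quotients $\tm^{t,\mu}_h(\tilde{\mu})$ defined in \eqref{m_h:def}, to derive uniqueness and \eqref{estimate_tm1} by an $L^2$-energy argument applied to \eqref{tm}, and finally to deduce \eqref{estimate_tm2} from the equation satisfied by $\delta\tm:=\tm^{t,\mu'}(\tilde{\mu})-\tm^{t,\mu}(\tilde{\mu})$.

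For existence, the starting point is the identity
\[
\frac{1}{h}[f(x,m^{t,\mu_h}(s,\cdot))-f(x,m^{t,\mu}(s,\cdot))]=\int_0^1\!\!\int_{\brd}\!\frac{\dd f}{\dd m}\bigl(x,m^{t,\mu}+\theta(m^{t,\mu_h}-m^{t,\mu})\bigr)(\xi)\,\tm^{t,\mu}_h(\tilde{\mu})(s,\xi)\,d\xi\,d\theta,
\]
valid for $f=a_{ij},b_i$, which, through the $L^2$-bound on $\frac{\dd f}{\dd m}(x,m)(\cdot)$ in (H2), keeps the nonlocal coefficients in the PDE for $\tm^{t,\mu}_h$ bounded uniformly in $h$. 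An $L^2$-energy estimate then makes \eqref{lem3_1} uniform in $h\in(0,1]$, so a subsequence of $\tm^{t,\mu}_h(\tilde{\mu})$ converges weakly in $L^2([t,T];W^{1,2}(\brd))$ and weakly-$*$ in $L^\infty([t,T];L^2(\brd))$ to some $\tm^{t,\mu}(\tilde{\mu})$. To pass to the limit in the weak formulation I would combine this weak convergence with the strong convergence $m^{t,\mu_h}\to m^{t,\mu}$ in $C([t,T];L^2(\brd))$ from Proposition~\ref{lemma6}, and with the $L^2$-Lipschitz continuity of $m\mapsto\frac{\dd f}{\dd m}(x,m)(\cdot)$ in (H2); the limit solves \eqref{tm}.

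For uniqueness and \eqref{estimate_tm1}, I would pair any solution $\tm$ of \eqref{tm} with itself in $L^2(\brd)$ and integrate by parts. Assumption (H1) contributes $\gamma\|\nabla\tm(s,\cdot)\|_{L^2}^2$ on the left; the cross-terms produced by the nonlocal part, typically
\[
\int_{\brd}\!\dd_j\tm(s,x)\Big[\int_{\brd}\!\tfrac{\dd a_{ij}}{\dd m}(x,m^{t,\mu}(s,\cdot))(\xi)\,\tm(s,\xi)\,d\xi\Big]\dd_i m^{t,\mu}(s,x)\,dx,
\]
are bounded by Cauchy--Schwarz, the $W^{1,2}$-bound on $\frac{\dd a_{ij}}{\dd m}(x,m)(\cdot)$ from (H2) and the $W^{1,2}$-bound on $m^{t,\mu}(s,\cdot)$ from \eqref{estimate_m}, by $\varepsilon\|\nabla\tm(s,\cdot)\|_{L^2}^2+C_\varepsilon\|\tm(s,\cdot)\|_{L^2}^2$. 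Absorbing $\varepsilon$ into the coercive term and applying Gronwall delivers \eqref{estimate_tm1} and, by linearity of \eqref{tm}, uniqueness; continuity in $s$ with values in $L^2(\brd)$ then follows from the weak formulation via the induced bound on $\dd_s\tm$ in $W^{-1,2}(\brd)$.

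For \eqref{estimate_tm2}, $\delta\tm$ solves an equation of the same form as \eqref{tm} with coefficients evaluated at $m^{t,\mu'}$, zero initial datum, and a source collecting the differences $a_{ij}(x,m^{t,\mu'})-a_{ij}(x,m^{t,\mu})$, $b_i(x,m^{t,\mu'})-b_i(x,m^{t,\mu})$, $\frac{\dd f}{\dd m}(x,m^{t,\mu'})-\frac{\dd f}{\dd m}(x,m^{t,\mu})$, multiplied by either $m^{t,\mu}$ or $\tm^{t,\mu}(\tilde{\mu})$. Using (H2) to bound these differences by $\|m^{t,\mu'}-m^{t,\mu}\|_{L^2}$, Proposition~\ref{lemma6} to estimate $\sup_s\|m^{t,\mu'}-m^{t,\mu}\|_{L^2}\le C\|\mu'-\mu\|_{L^2}$, and \eqref{estimate_tm1} to bound $\tm^{t,\mu}(\tilde{\mu})$ in $L^\infty([t,T];L^2)\cap L^2([t,T];W^{1,2})$, the source is controlled in $L^2([t,T];W^{-1,2}(\brd))$ by the right-hand side of \eqref{estimate_tm2}; re-running the preceding energy argument then yields \eqref{estimate_tm2} via Gronwall. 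The principal obstacle throughout is the bookkeeping of the nonlocal second-order terms in the divergence form of \eqref{tm}: one integration by parts forces derivatives onto products of $m^{t,\mu}$ with nonlocal integrals of $\tm$, producing cross-terms mixing $\nabla m^{t,\mu}$, $\nabla_x\frac{\dd a_{ij}}{\dd m}(x,m^{t,\mu})$ and $\int\frac{\dd a_{ij}}{\dd m}\,\tm$; closing these is what makes the combination of the $W^{1,2}(\brd)$-boundedness of $\frac{\dd f}{\dd m}(x,m)(\cdot)$ in (H2) and the $L^\infty([t,T];W^{1,2}(\brd))$-regularity of $m^{t,\mu}$ from Lemma~\ref{thm1} essential, and is why the hypothesis $\mu\in W^{1,2}(\brd)$ must be imposed at this stage.
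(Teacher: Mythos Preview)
Your arguments for uniqueness, for the a priori bound \eqref{estimate_tm1}, and for the stability estimate \eqref{estimate_tm2} are essentially identical to the paper's: pair the equation with the solution (respectively, with the difference $\delta\tm$), use (H1) for coercivity, control the nonlocal cross-terms via (H2) together with the $L^\infty([t,T];W^{1,2})$-regularity of $m^{t,\mu}$ from Lemma~\ref{thm1}, and close with Gronwall. Your observation that the $W^{1,2}$-regularity of $m^{t,\mu}$ (hence the hypothesis $\mu\in W^{1,2}$) is exactly what is needed to absorb the term $\nabla m^{t,\mu}\cdot(\text{nonlocal integral of }\tm)$ arising after one integration by parts is precisely the mechanism the paper exploits.

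Where you genuinely diverge from the paper is in the \emph{existence} step. The paper does not use the difference quotients $\tm^{t,\mu}_h(\tilde\mu)$ at all for existence; instead it freezes the nonlocal argument, defining a map $\Phi:C^0([t,T];L^2)\to C^0([t,T];L^2)$ by letting $\Phi(\hat m)$ solve the \emph{linear} parabolic problem obtained from \eqref{tm} with $\tm^{t,\mu}(\tilde\mu)$ replaced by $\hat m$ inside the nonlocal integrals, appeals to standard linear theory (Evans) for well-posedness of this frozen problem, and shows $\Phi$ is a contraction on short time intervals via the same $L^2$-energy estimate. Your route---extract a weak/weak-$*$ limit of $\tm^{t,\mu}_h(\tilde\mu)$ from the uniform bound \eqref{lem3_1}, then pass to the limit in the weak formulation using the strong convergence $m^{t,\mu_h}\to m^{t,\mu}$ from Proposition~\ref{lemma6} and the $L^2$-Lipschitz property of $m\mapsto\frac{\dd f}{\dd m}(x,m)(\cdot)$---is also valid, and has the pleasant side effect of already containing most of the convergence statement of Lemma~\ref{lem5}, which the paper proves separately \emph{after} Lemma~\ref{thm4}. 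The trade-off is that the fixed-point argument is cleaner and entirely self-contained (no compactness, no weak limits, no need to argue $C^0$-in-time regularity a posteriori via $\partial_s\tm\in L^2(W^{-1,2})$), whereas your argument front-loads the connection to the underlying flow $\mu\mapsto m^{t,\mu}$ at the cost of a slightly more delicate limit passage in the nonlocal terms.
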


We have the following convergence of $\tm_h^{t,\mu}(\tilde{\mu})$ to $\tilde{m}^{t,\mu}(\tilde{\mu})$ as $h$ goes to $0$, whose proof is given in Appendix~\ref{pf_lem5}.
\begin{lemma}\label{lem5}
	Let Assumptions (H1) and (H2) be satisfied, $\mu\in W^{1,2}(\brd)$ and $\tilde{\mu}\in L^2(\brd)$. Let $\tm^{t,\mu}(\tilde{\mu})\in C^0([t,T];L^2(\brd))$ be the solution of equation \eqref{tm}. Then,
	\begin{align*}
		&\sup_{t\le s\le T}\|\frac{m^{t,\mu+h\tilde{\mu}}-m^{t,\mu}}{h}(s,\cdot)-\tm^{t,\mu}(\tilde{\mu})(s,\cdot)\|_{L^2(\brd)}\\
		&\qquad\le C\big(\gamma,L,T,\|\mu\|_{W^{1,2}(\brd)},\|\tilde{\mu}\|_{L^{2}(\brd)}\big)h.
	\end{align*}
	That is, $\tm^{t,\mu}(\tilde{\mu})(s,\cdot)$ is the derivative of $m^{t,\mu}(s,\cdot)$ along the direction $\tilde{\mu}\in L^2(\brd)$.
\end{lemma}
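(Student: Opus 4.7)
\emph{Plan.} Set $w_h := \tm_h^{t,\mu}(\tilde{\mu}) - \tm^{t,\mu}(\tilde{\mu})$, so that $w_h(t,\cdot) \equiv 0$ because both functions share the initial datum $\tilde{\mu}$. Subtracting \eqref{tm} from the PDE for $\tm_h^{t,\mu}(\tilde{\mu})$ displayed just before \eqref{lem3_1} and using the fundamental theorem of calculus
\begin{align*}
	\frac{1}{h}\bigl(f(x, m^{t,\mu_h}) - f(x, m^{t,\mu})\bigr) = \int_0^1 \int_{\brd} \frac{\dd f}{\dd m}\bigl(x, m^{t,\mu} + \lambda h \tm_h^{t,\mu}(\tilde{\mu})\bigr)(\xi)\,\tm_h^{t,\mu}(\tilde{\mu})(\xi)\,d\xi\,d\lambda
\end{align*}
for $f = a_{ij}, b_i$, I obtain a divergence-form linear equation
\begin{align*}
	\frac{\dd w_h}{\dd s} - \sum_{i,j=1}^d\frac{\dd^2}{\dd x_i\dd x_j}\bigl[a_{ij}(x, m^{t,\mu})w_h\bigr] + \sum_{i=1}^d\frac{\dd}{\dd x_i}\bigl[b_i(x, m^{t,\mu})w_h\bigr] = \mathcal{R}_h,
\end{align*}
whose source $\mathcal{R}_h$ gathers second- and first-order divergences of three groups: cross terms $(a_{ij}(x,m^{t,\mu_h}) - a_{ij}(x,m^{t,\mu}))\tm_h^{t,\mu}(\tilde{\mu})$ (and the $b_i$-analogue); a Taylor remainder
\begin{align*}
	\Bigl\{\int_0^1\int_{\brd}\Bigl[\tfrac{\dd a_{ij}}{\dd m}\bigl(x, m^{t,\mu} + \lambda h\tm_h^{t,\mu}(\tilde{\mu})\bigr)(\xi) - \tfrac{\dd a_{ij}}{\dd m}(x, m^{t,\mu})(\xi)\Bigr]\tm_h^{t,\mu}(\tilde{\mu})(\xi)\,d\xi\,d\lambda\Bigr\}m^{t,\mu}
\end{align*}
(and the $b_i$-analogue); and a $w_h$-linear piece $\bigl(\int_{\brd}\tfrac{\dd a_{ij}}{\dd m}(x, m^{t,\mu})(\xi)\,w_h(s,\xi)\,d\xi\bigr)m^{t,\mu}$ (and the $b_i$-analogue).

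Next, I would test the equation above against $w_h$ and integrate over $\brd$. Two integrations by parts of the principal second-order operator, combined with (H1), yield the coercive quantity $\gamma\|\nabla w_h(s,\cdot)\|_{L^2(\brd)}^2$; the lower-order cross terms arising from $\dd_i a_{ij}$ and $b_i$ are absorbed by Young's inequality using the bounds in (H2). On the right, the $L$-Lipschitz continuity of $a_{ij}, b_i$ (resp.\ of $\tfrac{\dd a_{ij}}{\dd m}, \tfrac{\dd b_i}{\dd m}$) in $m$ from (H2), together with the identity $m^{t,\mu_h} - m^{t,\mu} = h\tm_h^{t,\mu}(\tilde{\mu})$ and the uniform bound \eqref{lem3_1}, makes the cross-term and Taylor-remainder contributions of $\mathcal{R}_h$ each of order $O(h)$ after one integration by parts transfers a derivative onto $w_h$; the $w_h$-linear piece produces an $\|w_h(s,\cdot)\|_{L^2(\brd)}^2$-term. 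Collecting everything leads to
\begin{align*}
	\|w_h(s,\cdot)\|_{L^2(\brd)}^2 + \gamma\int_t^s\|\nabla w_h(r,\cdot)\|_{L^2(\brd)}^2\,dr \le C h^2 + C\int_t^s\|w_h(r,\cdot)\|_{L^2(\brd)}^2\,dr
\end{align*}
with $C = C(\gamma, L, T, \|\mu\|_{W^{1,2}(\brd)}, \|\tilde{\mu}\|_{L^2(\brd)})$, and Gronwall's inequality then yields the desired $O(h)$ bound uniformly in $s\in[t,T]$.

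The main technical obstacle lies in distributing the two outer spatial derivatives on the products $(\cdots)\,m^{t,\mu}$ inside $\mathcal{R}_h$: after the energy test and one integration by parts, one derivative moves onto $\nabla w_h$ (handled by Young against the coercive term), while the other must fall on either $m^{t,\mu}(s,\cdot)$, on the $\xi$-map $\tfrac{\dd a_{ij}}{\dd m}(x,m)(\cdot)$, or on the inner integrand paired with $\tm_h^{t,\mu}(\tilde{\mu})$. Closing each of these contributions in $L^2(\brd)$ requires exactly the ingredients at hand: the $W^{1,2}$ spatial regularity of $m^{t,\mu}$ granted by Lemma~\ref{thm1} (whence the hypothesis $\mu\in W^{1,2}(\brd)$), the $W^{1,2}(\brd)$ control of $\xi\mapsto\tfrac{\dd f}{\dd m}(x,m)(\xi)$ built into (H2), and the $L^2([t,T]; W^{1,2}(\brd))$ bound on $\tm_h^{t,\mu}(\tilde{\mu})$ from \eqref{lem3_1}. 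With careful bookkeeping these combine to absorb each cross-product either into the coercive term or into the Gronwall loop, producing the claimed constant and the stated rate.
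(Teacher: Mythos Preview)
Your proposal is correct and follows essentially the same route as the paper: write the divergence-form equation for $w_h=\tm_h^{t,\mu}(\tilde\mu)-\tm^{t,\mu}(\tilde\mu)$, test it against $w_h$, use (H1)--(H2), \eqref{lem3_1}, and Lemma~\ref{thm1} to bound the source terms by $O(h^2)+C\|w_h\|_{L^2}^2$, and close with Gronwall. The only difference is cosmetic: the paper keeps $a_{ij}(x,m^{t,\mu_h})$ as the principal coefficient (so its cross term carries $\tm^{t,\mu}(\tilde\mu)$ rather than your $\tm_h^{t,\mu}(\tilde\mu)$, and its $w_h$-linear piece uses $\tfrac{\dd a_{ij}}{\dd m}$ at the perturbed density), but the resulting estimate is identical.
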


Lemma~\ref{lem5} and estimates \eqref{estimate_tm1} and \eqref{estimate_tm2} show that the mapping
\begin{align*}
	L^2(\brd)\ni\mu\mapsto m^{t,\mu}(s,\cdot)\in L^2(\brd)
\end{align*}
is Fr\'echet differentiable for $\mu\in W^{1,2}(\brd)$. Its Fr\'echet derivative $D_\mu m^{t,\mu}(s,\cdot)$ is
\begin{align*}
	D_\mu m^{t,\mu}(s,\cdot)(\tilde{\mu})=\tm^{t,\mu}(\tilde{\mu})(s,\cdot),\qquad\tilde{\mu}\in L^2(\brd).
\end{align*}
To represent the Fr\'echet derivative $D_\mu m^{t,\mu}(s,\cdot)$, we consider the following PDE:
\begin{equation}\label{k}
	\left\{
	\begin{aligned}
		&\frac{\dd k^{t,\mu}}{\dd s}(s,x,y)-\sum_{i,j=1}^d\frac{\dd^2}{\dd x_i\dd x_j}\Big[a_{ij}(x,m^{t,\mu}(s,\cdot))k^{t,\mu}(s,x,y)\\
		&\qquad+\int_{\brd}\frac{\dd a_{ij}}{\dd m}(x,m^{t,\mu}(s,\cdot))(\xi)k^{t,\mu}(s,\xi,y)d\xi \cdot m^{t,\mu}(s,x)\Big]\\
		&\qquad+\sum_{i=1}^d\frac{\dd}{\dd x_i}\Big[b_i(x,m^{t,\mu}(s,\cdot))k^{t,\mu}(s,x,y)\\
		&\qquad+\int_{\brd}\frac{\dd b_i}{\dd m}(x,m^{t,\mu}(s,\cdot))(\xi)k^{t,\mu}(s,\xi,y)d\xi \cdot m^{t,\mu}(s,x)\Big]=0,\\
		&\qquad\qquad\qquad\qquad\qquad\qquad(s,x,y)\in(t,T]\times\brd\times\brd,\\
		&k^{t,\mu}(t,x,y)=\delta(x-y),\quad (x,y)\in\brd\times\brd.
	\end{aligned}
	\right.
\end{equation}
The following proposition is proved in Appendix~\ref{pf_lem7}.
\begin{proposition}\label{lem7}
	Let Assumptions (H1) and (H2) be satisfied and $\mu\in W^{1,2}(\brd)$. Let $m^{t,\mu}$ be the solution of equation \eqref{pde_m} such that \eqref{estimate_m} holds true and let $k^{t,\mu}$ be a weak solution of equation \eqref{k}. Then, for any $\tilde{\mu}\in L^2(\brd)$, the directional derivative $\tm^{t,\mu}(\tilde{\mu})(s,\cdot)$ of $m^{t,\mu}(s,\cdot)$ satisfies
	\begin{align*}
		\tm^{t,\mu}(\tilde{\mu})(s,x)=\int_{\brd} k^{t,\mu}(s,x,y)\tilde{\mu}(y)dy,\quad (s,x)\in [t,T]\times\brd.
	\end{align*}
	That is, the Fr\'echet derivative $D_\mu m^{t,\mu}(s,\cdot)$ of $m^{t,\mu}(s,\cdot)$ satisfies
	\begin{align*}
		D_\mu m^{t,\mu}(s,\cdot)(\tilde{\mu})=\int_{\brd} k^{t,\mu}(s,\cdot,y)\tilde{\mu}(y)dy,\qquad\tilde{\mu}\in L^2(\brd).
	\end{align*}
\end{proposition}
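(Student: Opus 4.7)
The plan is to exploit the linearity of equation~\eqref{tm} in both the unknown and the initial datum. Since Lemma~\ref{thm4} furnishes, for each $\tilde{\mu}\in L^2(\brd)$, a unique $C^0([t,T];L^2(\brd))$-solution $\tm^{t,\mu}(\tilde{\mu})$ of~\eqref{tm}, and since equation~\eqref{k} is simply~\eqref{tm} with Dirac initial datum $\delta(\cdot-y)$, the kernel $k^{t,\mu}(s,x,y)$ is morally the Schwartz kernel of the solution operator $\tilde{\mu}\mapsto\tm^{t,\mu}(\tilde{\mu})$. I therefore define the candidate
$$\phi(s,x):=\int_{\brd} k^{t,\mu}(s,x,y)\,\tilde{\mu}(y)\,dy,\qquad (s,x)\in[t,T]\times\brd,$$
and aim to show $\phi=\tm^{t,\mu}(\tilde{\mu})$, which gives the proposition in view of the identity $D_\mu m^{t,\mu}(s,\cdot)(\tilde{\mu})=\tm^{t,\mu}(\tilde{\mu})(s,\cdot)$ established just after Lemma~\ref{lem5}.

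To verify this, I test the weak form of~\eqref{k} against $\tilde{\mu}(y)$ and use Fubini to exchange the $y$-integration with the $(s,x,\xi)$-operations. All coefficients $a_{ij}(x,m^{t,\mu}(s,\cdot))$, $b_i(x,m^{t,\mu}(s,\cdot))$, $m^{t,\mu}(s,x)$ and the derivatives $\frac{\dd a_{ij}}{\dd m}(x,m^{t,\mu}(s,\cdot))(\xi)$, $\frac{\dd b_i}{\dd m}(x,m^{t,\mu}(s,\cdot))(\xi)$ are independent of $y$, so they factor through; the non-local terms transform as
\begin{align*}
	&\int_{\brd}\tilde{\mu}(y)\Big[\int_{\brd}\frac{\dd a_{ij}}{\dd m}(x,m^{t,\mu}(s,\cdot))(\xi)\,k^{t,\mu}(s,\xi,y)\,d\xi\Big]dy\\
	&\qquad=\int_{\brd}\frac{\dd a_{ij}}{\dd m}(x,m^{t,\mu}(s,\cdot))(\xi)\,\phi(s,\xi)\,d\xi,
\end{align*}
and analogously for the $b_i$-term. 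The initial condition $k^{t,\mu}(t,\cdot,y)=\delta(\cdot-y)$ means, by definition of the weak trace, that $\phi(t,\cdot)=\tilde{\mu}$. Hence $\phi$ is a solution of the Cauchy problem~\eqref{tm}, and uniqueness in Lemma~\ref{thm4} forces $\phi=\tm^{t,\mu}(\tilde{\mu})$.

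The main obstacle is to give a rigorous meaning to the weak solution $k^{t,\mu}$ with Dirac initial data and to justify the Fubini interchange. I would handle both by mollification: replace $\delta(\cdot-y)$ by $\rho_\varepsilon(\cdot-y)$, with $\rho_\varepsilon\in C_c^\infty(\brd)$ a standard mollifier, so that $\rho_\varepsilon(\cdot-y)\in W^{1,2}(\brd)$; Lemma~\ref{thm4} then produces, for each fixed $y$, an approximant $k^{t,\mu}_\varepsilon(\cdot,\cdot,y)$ for which the estimate~\eqref{estimate_tm1} is uniform in $\varepsilon$. This legitimizes the Fubini step for $\phi_\varepsilon(s,x):=\int k_\varepsilon^{t,\mu}(s,x,y)\tilde{\mu}(y)\,dy$, and passing to the limit $\varepsilon\to 0$ identifies $\phi_\varepsilon$ with $\tm^{t,\mu}(\tilde{\mu})$ on one side (by the $L^2$-stability of~\eqref{tm}, analogous to Lemma~\ref{lem5}) and with $\int k^{t,\mu}(s,x,y)\tilde{\mu}(y)\,dy$ on the other (by the weak-solution hypothesis, combined with a density argument from $\tilde{\mu}\in C_c^\infty(\brd)$ to $\tilde{\mu}\in L^2(\brd)$).
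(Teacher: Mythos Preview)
Your proposal is correct and follows essentially the same route as the paper: define the candidate $\phi(s,x)=\int_{\brd}k^{t,\mu}(s,x,y)\tilde{\mu}(y)\,dy$, verify by integrating the $y$-parametrized equation~\eqref{k} against $\tilde{\mu}(y)$ that $\phi$ solves~\eqref{tm} with initial datum $\tilde{\mu}$, and conclude via the uniqueness assertion of Lemma~\ref{thm4}. The paper carries out this computation formally (differentiating under the integral sign and using $\int\delta(x-y)\tilde{\mu}(y)\,dy=\tilde{\mu}(x)$) without the mollification layer you add; your extra care about justifying Fubini and the Dirac initial trace is reasonable but not something the paper itself supplies.
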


In the rest of this section, we give the existence, the boundedness and the continuity of solutions of   equation \eqref{k}. We solve equation \eqref{k} via solutions of the following two PDEs: one for $f^{t,\mu}$ defined on $[t,T]\times\brd\times\brd$ as
\begin{equation}\label{f}
	\left\{
	\begin{aligned}
		&\frac{\dd f^{t,\mu}}{\dd s}(s,x,y)-\sum_{i,j=1}^d\frac{\dd^2}{\dd x_i\dd x_j}[a_{ij}(x,m^{t,\mu}(s,\cdot))f^{t,\mu}(s,x,y)]\\
		&\qquad+\sum_{i=1}^d\frac{\dd}{\dd x_i}[b_i(x,m^{t,\mu}(s,\cdot))f^{t,\mu}(s,x,y)]=0,\\
		&\qquad\qquad\qquad\qquad(s,x,y)\in(t,T]\times\brd\times\brd,\\
		&f^{t,\mu}(t,x,y)=\delta(x-y),\quad (x,y)\in\brd\times\brd;
	\end{aligned}
	\right.
\end{equation}
and the other one for $g^{t,\mu}$ defined on $[t,T]\times\brd\times\brd$ as
\begin{equation}\label{g}
	\left\{
	\begin{aligned}
		&\frac{\dd g^{t,\mu}}{\dd s}(s,x,y)-\sum_{i,j=1}^d\frac{\dd^2}{\dd x_i\dd x_j}\Big[a_{ij}(x,m^{t,\mu}(s,\cdot))g^{t,\mu}(s,x,y)\\
		&\qquad+\int_{\brd}\frac{\dd a_{ij}}{\dd m}(x,m^{t,\mu}(s,\cdot))(\xi)g^{t,\mu}(s,\xi,y)d\xi \cdot m^{t,\mu}(s,x)\Big]\\
		&\qquad+\sum_{i=1}^d\frac{\dd}{\dd x_i}\Big[b_i(x,m^{t,\mu}(s,\cdot))g^{t,\mu}(s,x,y)\\
		&\qquad+\int_{\brd}\frac{\dd b_i}{\dd m}(x,m^{t,\mu}(s,\cdot))(\xi)g^{t,\mu}(s,\xi,y)d\xi \cdot m^{t,\mu}(s,x)\Big]\\
		&=\sum_{i,j=1}^d\frac{\dd^2}{\dd x_i\dd x_j}\Big[\int_{\brd}\frac{\dd a_{ij}}{\dd m}(x,m^{t,\mu}(s,\cdot))(\xi)f^{t,\mu}(s,\xi,y)d\xi \cdot m^{t,\mu}(s,x)\Big]\\
		&\qquad-\sum_{i=1}^d\frac{\dd}{\dd x_i}\Big[\int_{\brd}\frac{\dd b_i}{\dd m}(x,m^{t,\mu}(s,\cdot))(\xi)f^{t,\mu}(s,\xi,y)d\xi \cdot m^{t,\mu}(s,x)\Big],\\
		&\qquad\qquad\qquad\qquad\qquad\qquad\qquad\qquad(s,x,y)\in(t,T]\times\brd\times\brd,\\
		&g^{t,\mu}(t,x,y)=0,\quad (x,y)\in\brd\times\brd.
	\end{aligned}
	\right.
\end{equation}
Obviously, if $f^{t,\mu}$ and $g^{t,\mu}$ are weak solutions of equations \eqref{f} and \eqref{g}, respectively, then,
\begin{align*}
	k^{t,\mu}(s,x,y):=f^{t,\mu}(s,x,y)+g^{t,\mu}(s,x,y),\quad (s,x,y)\in[t,T]\times\brd\times\brd,
\end{align*}
is a weak solution of equation \eqref{k}. To estimate the solution of equation \eqref{k}, we need to estimate solutions of equations \eqref{f} first and then \eqref{g}. We now describe our Assumption (H3).

\textbf{(H3)} For $(t,\mu)\in [0,T]\times W^{1,2}(\brd)$, equation \eqref{f} has a weak solution $f^{t,\mu}$, such that for any $\varphi\in L^{2}(\brd)$, the function $\phi^{t,\mu}$ defined as
\begin{align}\label{phi}
	\phi^{t,\mu}(s,y):=\int_{\brd} f^{t,\mu}(s,x,y)\varphi(x)dx,\quad (s,y)\in[t,T]\times\brd,
\end{align}
satisfies
\begin{equation}\label{lem8_1}
	\begin{split}
		&\sup_{t\le s\le T}\|\phi^{t,\mu}(s,\cdot)\|_{L^{2}(\brd)}+\|\phi^{t,\mu}\|_{L^2([t,T];W^{1,2}(\brd))} \le C(\gamma,L,T)\|\varphi\|_{L^{2}(\brd)}.
	\end{split}
\end{equation}
If moreover $\varphi\in W^{1,2}(\brd)$, then,
\begin{equation}\label{lem8_1'}
	\begin{split}
		&\|\phi_s^{t,\mu}\|_{L^{2}([t,T];L^2(\brd))} \le C(\gamma,L,T)\|\varphi\|_{W^{1,2}(\brd)}.
	\end{split}
\end{equation}
For $\mu,\mu'\in W^{1,2}(\brd)$, functions $\phi^{t,\mu},\phi^{t,\mu'}$ satisfy
\begin{equation}\label{lem10_1}
	\begin{split}
		&\sup_{t\le s\le T}\|\phi^{t,\mu'}(s,\cdot)-\phi^{t,\mu}(s,\cdot)\|_{L^{2}(\brd)}\\
		&\qquad\le C(\gamma,L,T,\|\mu\|_{W^{1,2}(\brd)}){\|\varphi\|_{W^{1,2}(\brd)}}\|\mu'-\mu\|_{L^2(\brd)}.
	\end{split}
\end{equation}

\begin{remark}\label{rm1}
	Let Assumptions (H1) and (H2) be satisfied. If coefficients $(b,\sigma)$ are independent of $x$, then Assumption (H3) holds true, which is proved in Lemma~\ref{exp1}.
\end{remark}

We now consider the existence, boundedness, and the continuous dependence of $g^{t,\mu}$ on $\mu$. We first claim that the right-hand side of equation \eqref{g} belongs to the class $L^2([t,T];L^2(\brd\times\brd))$. Actually, from \eqref{lem8_1} and Assumption (H2), we have for any $t\le s\le T$ and $1\le i,j\le d$,
\begin{equation*}\label{bf}
	\begin{split}
		& \int_{\brd}\int_{\brd} \Big|\int_{\brd}\frac{\dd a_{ij}}{\dd m}(x,m^{t,\mu}(s,\cdot))(\xi)f^{t,\mu}(s,\xi,y)d\xi \cdot m_{x_ix_j}^{t,\mu}(s,x)\Big|^2 dydx\\
		&\qquad= \int_{\brd}\int_{\brd}\Big|\int_{\brd}\frac{\dd a_{ij}}{\dd m}(x,m^{t,\mu}(s,\cdot))(\xi)f^{t,\mu}(s,\xi,y)d\xi \Big|^2dy\cdot|m_{x_ix_j}^{t,\mu}(s,x)|^2dx\\
		&\qquad\le C(\gamma,L,T)\int_{\brd} \|\frac{\dd a}{\dd m}(x,m^{t,\mu}(s,\cdot))(\cdot)\|^2_{L^2(\brd)}\cdot |m_{x_ix_j}^{t,\mu}(s,x)|^2dx\\
		&\qquad\le C(\gamma,L,T)\|m_{x_ix_j}^{t,\mu}(s,\cdot)\|^2_{L^2(\brd)}.
	\end{split}
\end{equation*}
So from Lemma~\ref{thm1}, we have
\begin{align*}
	&\int_t^T\int_{\brd}\int_{\brd} \Big|\int_{\brd}\frac{\dd a_{ij}}{\dd m}(x,m^{t,\mu}(s,\cdot))(\xi)f^{t,\mu}(s,\xi,y)d\xi \cdot m_{x_ix_j}^{t,\mu}(s,x)\Big|^2 dydxds\\
	&\qquad\le C(\gamma,L,T)\|m_{x_ix_j}^{t,\mu}\|^2_{L^2([t,T];L^2(\brd))}\le C(\gamma,L,T,\|\mu\|_{W^{1,2}(\brd)}).
\end{align*}
In a similar way, we have the last claim. From \cite[Definition and Remark, p.374; Theorems 3 and 4, p.378]{ELC}, similar to the proof of Lemma~\ref{thm4}, we know that equation \eqref{g} has a unique solution in $C^0([t,T];L^2(\brd\times\brd))$. Moreover, we have the following estimate, which is proved in Appendix~\ref{pf_lem9}.

\begin{lemma}\label{lem9}
	Let Assumptions (H1)-(H3) be satisfied. Let $\mu,\mu'\in W^{1,2}(\brd)$ and let $g^{t,\mu},g^{t,\mu'}$ be solutions of equation \eqref{g} corresponding to $\mu$ and $\mu'$, respectively. Then,
	\begin{align}
		&\sup_{s\in[t,T]}\|g^{t,\mu}(s,\cdot,\cdot)\|_{W^{1,2}(\brd\times\brd)}+\|g^{t,\mu}_s\|_{L^2([t,T];L^{2}(\brd\times\brd))}\label{lem9_1}\\
		&\qquad\le C\big(\gamma,L,T,\|\mu\|_{W^{1,2}(\brd)}\big);\notag\\
		&\sup_{t\le s\le T}\|g^{t,\mu'}(s,\cdot,\cdot)-g^{t,\mu}(s,\cdot,\cdot)\|_{L^{2}(\brd\times\brd)}\label{lem11_1}\\
		&\qquad\le C(\gamma,L,T,\|\mu\|_{W^{1,2}(\brd)},\|\mu'\|_{W^{1,2}(\brd)})\|\mu'-\mu\|_{W^{1,2}(\brd)}.\notag
	\end{align}
	Here, for $s\in[t,T]$,
	\begin{equation*}
		\|g^{t,\mu}(s,\cdot,\cdot)\|_{W^{1,2}(\brd\times\brd)}:=\|g^{t,\mu}(s,\cdot,\cdot)\|_{L^2(\brd\times\brd)}+\sum_{i=1}^d\|g^{t,\mu}_{x_i}(s,\cdot,\cdot)\|_{L^2(\brd\times\brd)}.
	\end{equation*}
\end{lemma}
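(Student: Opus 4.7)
The strategy is to apply $L^2(\brd\times\brd)$ parabolic energy estimates to equation \eqref{g} (treating $y$ as a parameter in the initial data), to upgrade the spatial regularity by differentiating once in $x$, and finally to read $g_s^{t,\mu}$ off the equation. The continuous-dependence bound \eqref{lem11_1} follows by the same energy method applied to the difference equation.

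First I would test \eqref{g} against $g^{t,\mu}$ in $L^2(\brd\times\brd)$. A single integration by parts in $x$ on the divergence-form left-hand side delivers, via (H1), the coercive contribution $\gamma\int\!\!\int|\nabla_x g^{t,\mu}|^2\,dxdy$. The nonlocal kernels $\int\frac{\dd a_{ij}}{\dd m}(x,m^{t,\mu})(\xi)\,g^{t,\mu}(s,\xi,y)\,d\xi$ (and the analogous $b_i$-term) are controlled in $L^2_y$ by $L\,\|g^{t,\mu}(s,\cdot,\cdot)\|_{L^2(\brd\times\brd)}$ via Cauchy--Schwarz and (H2); multiplied against $m^{t,\mu}$ or $\partial_x m^{t,\mu}$, they are absorbed via Young's inequality using the $L^\infty(L^2)$-bound on $m^{t,\mu}$ from Lemma~\ref{thm1}. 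The source on the right, already in divergence form, is tested against $\nabla_x g^{t,\mu}$ after one IBP; applying (H3) with the $x$-parametrized kernel $\varphi(\xi)=\frac{\dd a_{ij}}{\dd m}(x,m^{t,\mu}(s,\cdot))(\xi)$, whose $L^2$-norm is bounded by $L$ uniformly in $(s,x)$, shows that the transforms $\int\frac{\dd a_{ij}}{\dd m}(x,m^{t,\mu})(\xi)\,f^{t,\mu}(s,\xi,y)\,d\xi$ are uniformly bounded in $L^2_y$; combined with Lemma~\ref{thm1}'s $L^2_s(W^{2,2})$ control on $m^{t,\mu}$, the source closes by Young. Gr\"onwall in $s$ then produces
\begin{equation*}
\sup_{s\in[t,T]}\|g^{t,\mu}(s,\cdot,\cdot)\|_{L^2(\brd\times\brd)}^2 + \gamma\int_t^T\|\nabla_x g^{t,\mu}\|_{L^2(\brd\times\brd)}^2\,ds \le C(\gamma,L,T,\|\mu\|_{W^{1,2}(\brd)}).
\end{equation*}

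To upgrade to $\sup_s\|g^{t,\mu}\|_{W^{1,2}(\brd\times\brd)}$, I would formally differentiate \eqref{g} in $x_k$ and rerun the same energy estimate on $\partial_{x_k} g^{t,\mu}$. The new coefficients $\partial_{x_k} a_{ij}$, $\partial_{x_k} b_i$, and $\partial_{x_k}[\frac{\dd a_{ij}}{\dd m}(x,m)(\xi)] = \frac{\dd}{\dd m}[\frac{\dd a_{ij}}{\dd x_k}](x,m)(\xi)$ are all $L$-bounded by (H2). Keeping the differentiated source in divergence form and performing only one IBP onto $\partial_{x_k} g^{t,\mu}$ limits the spatial derivatives of $m^{t,\mu}$ that appear to second order, which lie in $L^2([t,T];L^2(\brd))$ by Lemma~\ref{thm1}; the corresponding transforms of $f^{t,\mu}$ are still covered by (H3), with \eqref{lem8_1'} playing the crucial role for the differentiated pieces. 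The resulting inequality combined with the baseline step yields the $\sup_s$-part of \eqref{lem9_1}. With $\sup_s\|g^{t,\mu}\|_{W^{1,2}}$ in hand, $g_s^{t,\mu}$ is read off \eqref{g}: every remaining term is in $L^2([t,T];L^2(\brd\times\brd))$.

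For \eqref{lem11_1}, set $\delta g := g^{t,\mu'} - g^{t,\mu}$ and $\delta m := m^{t,\mu'} - m^{t,\mu}$. Subtracting the two equations, $\delta g$ satisfies an equation of the same structural form as \eqref{g} (with coefficients at $m^{t,\mu'}$) and zero initial data, plus source terms generated by the differences $a_{ij}(\cdot,m^{t,\mu'}) - a_{ij}(\cdot,m^{t,\mu})$, $\frac{\dd a_{ij}}{\dd m}(\cdot,m^{t,\mu'}) - \frac{\dd a_{ij}}{\dd m}(\cdot,m^{t,\mu})$ and the $b_i$-analogues acting on $g^{t,\mu}$ or $f^{t,\mu}$, multiplicative $\delta m$ factors, and the $\phi^{t,\mu'}-\phi^{t,\mu}$ differences inside the right-hand side. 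These sources are bounded in $L^2([t,T];L^2(\brd\times\brd))$ by the Lipschitz estimates of (H2), by Proposition~\ref{lemma6} (yielding $\|\delta m\|_{L^\infty(W^{1,2})} \le C\|\mu'-\mu\|_{W^{1,2}}$), and by (H3) estimate \eqref{lem10_1}, the total bound being $C\|\mu'-\mu\|_{W^{1,2}(\brd)}$. The baseline $L^2$ energy estimate applied to $\delta g$ plus Gr\"onwall then deliver \eqref{lem11_1}.

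The main obstacle is balancing the regularity budget in the $W^{1,2}$ upgrade: naively differentiating the RHS source of \eqref{g} in $x_k$ formally introduces $\partial^3 m^{t,\mu}$, which is not available under $\mu\in W^{1,2}$. The remedy, applied throughout, is to preserve the divergence structure and move one derivative onto the test function by integration by parts, so that only $\partial^2 m^{t,\mu} \in L^2([t,T];L^2(\brd))$ (from Lemma~\ref{thm1}) is required; the role of (H3)'s $W^{1,2}$-in-$\varphi$ estimate \eqref{lem8_1'} is precisely to supply the correct dual pairing to make this transfer quantitative.
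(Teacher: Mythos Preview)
Your proposal is correct and, for the baseline $L^2$ estimate and for the continuous-dependence bound \eqref{lem11_1}, follows essentially the paper's argument: test against $g$ (resp.\ $\Delta g$), use the coercivity from (H1), control the nonlocal terms by (H2) and Lemma~\ref{thm1}, bound the source via (H3) and Proposition~\ref{lemma6}, and close by Gr\"onwall.

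The one genuinely different step is your $W^{1,2}$ upgrade. You differentiate \eqref{g} in $x_k$ and rerun the energy argument on $\partial_{x_k}g$, carefully keeping the source in divergence form so that only $\partial^2 m^{t,\mu}\in L^2([t,T];L^2)$ is needed; then you recover $g_s$ by reading it off the equation once $g_{xx}\in L^2([t,T];L^2)$ is in hand. The paper instead multiplies \eqref{g} directly by $a^{-1}g_s$, which in one stroke yields the inequality
\[
\tfrac{1}{L}\|g_s(s,\cdot,\cdot)\|_{L^2}^2+\tfrac{d}{ds}\|g_x(s,\cdot,\cdot)\|_{L^2}^2\le C\big[\|g_x\|_{L^2}^2+(1+\|m^{t,\mu}(s,\cdot)\|_{W^{2,2}}^2)\|g\|_{L^2}^2+\|l\|_{L^2}^2\big],
\]
where $l$ is the right-hand side of \eqref{g}, already shown to lie in $L^2([t,T];L^2(\brd\times\brd))$. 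This avoids differentiating the equation altogether and delivers $\sup_s\|g_x\|$ and $\|g_s\|_{L^2_s(L^2)}$ simultaneously. Your route works but is more laborious; the paper's is the standard parabolic second-energy trick. A minor point: your invocation of \eqref{lem8_1'} (the time-derivative bound on $\phi^{t,\mu}$) is unnecessary in your $x$-differentiation scheme---only \eqref{lem8_1} is actually used, since differentiating in $x_k$ never touches the $f^{t,\mu}$ factor inside the integral.
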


As a consequence of Assumption (H3) and Lemma~\ref{lem9}, we have the following proposition that shows the boundedness and continuous dependence of $k^{t,\mu}(s,\cdot,\cdot)$ on the initial $\mu$ and the time $s$.
\begin{proposition}\label{lem12}
	Let Assumptions (H1)-(H3) be satisfied and $\mu\in W^{1,2}(\brd)$. Then, there is a weak solution $k^{t,\mu}$ of equation \eqref{k}. For any $\varphi\in {L^{2}(\brd)}$, the function $\phi^{t,\mu}$ defined as
	\begin{align*}
		&\phi^{t,\mu}(s,y):=\int_{\brd} k^{t,\mu}(s,x,y)\varphi(x)dx,\quad (s,y)\in[t,T]\times\brd,
	\end{align*}
	satisfies the following estimate
	\begin{align}\label{lem12_1}
		&\sup_{s\in[t,T]}\|\phi^{t,\mu}(s,\cdot)\|_{L^{2}(\brd)}\le C\big(\gamma,L,T,\|\mu\|_{W^{1,2}(\brd)}\big)\|\vp\|_{L^2(\brd)}.
	\end{align}
	If moreover $\varphi\in { W^{1,2}(\brd)}$, then for $t\le s<s'\le T$ and $\mu,\mu'\in W^{1,2}(\brd)$, we have the following estimates
	\begin{align}
		&\sup_{t\le s\le T}\|\phi^{t,\mu'}(s,\cdot)-\phi^{t,\mu}(s,\cdot)\|_{L^{2}(\brd)}\label{lem12_2}\\
		&\qquad\le C(\gamma,L,T,\|\mu\|_{W^{1,2}(\brd)},\|\mu'\|_{W^{1,2}(\brd)})\|\varphi\|_{ W^{1,2}(\brd)}\|\mu'-\mu\|_{W^{1,2}(\brd)};\notag\\
		&\|\phi^{t,\mu}(s',\cdot)-\phi^{t,\mu}(s,\cdot)\|_{L^{2}(\brd)}\label{lem12_3}\\
		&\qquad\le C(\gamma,L,T,\|\mu\|_{W^{1,2}(\brd)},\|\varphi\|_{ W^{1,2}(\brd)})|s'-s|^{\frac{1}{2}}.\notag
	\end{align}
\end{proposition}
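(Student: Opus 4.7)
The plan is to build $k^{t,\mu}$ from the decomposition $k^{t,\mu}=f^{t,\mu}+g^{t,\mu}$ already introduced before the statement, so that the corresponding function $\phi^{t,\mu}$ splits into an $f$-piece and a $g$-piece that are controlled, respectively, by Assumption (H3) and by Lemma~\ref{lem9}. Writing
\[
\phi^{t,\mu}_f(s,y):=\int_{\brd}f^{t,\mu}(s,x,y)\varphi(x)dx,\qquad \phi^{t,\mu}_g(s,y):=\int_{\brd}g^{t,\mu}(s,x,y)\varphi(x)dx,
\]
I would first verify existence of a weak solution of \eqref{k} by combining the weak $f^{t,\mu}$ from (H3) with the solution $g^{t,\mu}\in C^0([t,T];L^2(\brd\times\brd))$ of \eqref{g} obtained from the calculation just before Lemma~\ref{lem9}.

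For \eqref{lem12_1}, the bound on $\phi^{t,\mu}_f$ is immediate from (H3), namely \eqref{lem8_1}. For $\phi^{t,\mu}_g$, Cauchy--Schwarz in the $x$-variable gives
\[
\|\phi^{t,\mu}_g(s,\cdot)\|^2_{L^2(\brd)}\le \|g^{t,\mu}(s,\cdot,\cdot)\|^2_{L^2(\brd\times\brd)}\,\|\varphi\|^2_{L^2(\brd)},
\]
which by \eqref{lem9_1} is bounded by $C(\gamma,L,T,\|\mu\|_{W^{1,2}})\|\varphi\|^2_{L^2}$. Estimate \eqref{lem12_2} follows the same split: the $f$-piece is controlled using \eqref{lem10_1} of (H3), producing a factor $\|\varphi\|_{W^{1,2}}\|\mu'-\mu\|_{L^2}$; for the $g$-piece, the same Cauchy--Schwarz trick applied to $g^{t,\mu'}-g^{t,\mu}$ together with \eqref{lem11_1} produces $\|\varphi\|_{L^2}\|\mu'-\mu\|_{W^{1,2}}$. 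Adding the two contributions and dominating by the larger norms gives the claimed bound.

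The time-continuity estimate \eqref{lem12_3} is the slightly more delicate step. The strategy is to write, for $t\le s<s'\le T$,
\[
\phi^{t,\mu}(s',y)-\phi^{t,\mu}(s,y)=\int_{s}^{s'}\partial_\tau \phi^{t,\mu}(\tau,y)\,d\tau,
\]
and then use Cauchy--Schwarz in $\tau$ to extract the factor $|s'-s|^{1/2}$. This reduces the claim to showing that $\partial_s\phi^{t,\mu}\in L^2([t,T];L^2(\brd))$ with a bound depending only on $\gamma,L,T,\|\mu\|_{W^{1,2}}$ and $\|\varphi\|_{W^{1,2}}$. For the $f$-piece, inequality \eqref{lem8_1'} of (H3) gives exactly this with norm $\|\varphi\|_{W^{1,2}}$. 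For the $g$-piece, differentiating under the integral sign and applying Cauchy--Schwarz yields
\[
\|\partial_s\phi^{t,\mu}_g(s,\cdot)\|^2_{L^2(\brd)}\le \|\varphi\|^2_{L^2(\brd)}\,\|g^{t,\mu}_s(s,\cdot,\cdot)\|^2_{L^2(\brd\times\brd)},
\]
and integration in $s$ combined with \eqref{lem9_1} bounds this by $C\|\varphi\|^2_{L^2}$.

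The main technical obstacle I anticipate is the last one: justifying the identity $\phi^{t,\mu}(s',\cdot)-\phi^{t,\mu}(s,\cdot)=\int_s^{s'}\partial_\tau\phi^{t,\mu}(\tau,\cdot)d\tau$ in $L^2(\brd)$. For the $f$-part this is legitimate because $\phi^{t,\mu}_f$ is itself a weak solution of a linear parabolic equation with $L^2$ time-derivative (by \eqref{lem8_1'}); for the $g$-part one must check that integrating the pointwise formula $\partial_s\phi^{t,\mu}_g(s,y)=\int g^{t,\mu}_s(s,x,y)\varphi(x)dx$ against $\varphi$ is compatible with the $C^0([t,T];L^2(\brd\times\brd))$ regularity of $g^{t,\mu}$ and its weak derivative $g^{t,\mu}_s\in L^2([t,T];L^2(\brd\times\brd))$ from Lemma~\ref{lem9}. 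Once this is handled via a standard mollification/density argument, the three bounds \eqref{lem12_1}--\eqref{lem12_3} follow routinely by combining the two sets of estimates.
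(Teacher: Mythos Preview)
Your proposal is correct and follows essentially the same route as the paper: the decomposition $k^{t,\mu}=f^{t,\mu}+g^{t,\mu}$, with the $f$-piece controlled via the estimates \eqref{lem8_1}, \eqref{lem8_1'}, \eqref{lem10_1} of (H3) and the $g$-piece via Cauchy--Schwarz in $x$ combined with \eqref{lem9_1} and \eqref{lem11_1}, is exactly what the paper does. For \eqref{lem12_3} the paper likewise writes each piece as $\int_s^{s'}\partial_r(\cdot)\,dr$ and applies Cauchy--Schwarz in $r$ to produce the $|s'-s|^{1/2}$ factor; the justification you flag is simply taken for granted there.
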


\begin{proof}
	The existence of $k^{t,\mu}$ and estimates \eqref{lem12_1} and \eqref{lem12_2} are direct consequences of Assumption (H3) and Lemma~\ref{lem9}. Now we prove \eqref{lem12_3}. We set
	\begin{align*}
		&u^{t,\mu}(s,y):=\int_{\brd} f^{t,\mu}(s,x,y)\varphi(x)dx,\quad (s,y)\in[t,T]\times\brd.
	\end{align*}
	Then, we have
	\begin{align*}
		\|\phi^{t,\mu}(s',\cdot)-\phi^{t,\mu}(s,\cdot)\|^2_{L^{2}(\brd)}\le &2\int_{\brd} |u^{t,\mu}(s',y)-u^{t,\mu}(s,y)|^2 dy \\
		&+2\int_{\brd}\Big|\int_{\brd} \varphi(x)[g^{t,\mu}(s',x,y)-g^{t,\mu}(s,x,y)]dx\Big|^2dy.
	\end{align*}
	From Cauchy's inequality and Assumption (H3), we have
	\begin{align*}
		&\int_{\brd} |u^{t,\mu}(s',y)-u^{t,\mu}(s,y)|^2 dy=\int_{\brd}\Big|\int_s^{s'}\frac{\dd u^{t,\mu}}{\dd r}(r,y)dr\Big|^2dy\\
		&\qquad\le |s'-s|\|u_s^{t,\mu}\|^2_{L^{2}([t,T];L^2(\brd)} \le |s'-s|C(\gamma,L,T)\|\varphi\|_{W^{1,2}(\brd)}^2.
	\end{align*}
	From Cauchy's inequality and Lemma~\ref{lem9}, we have
	\begin{align*}
		&\int_{\brd}\Big|\int_{\brd} \varphi(x)[g^{t,\mu}(s',x,y)-g^{t,\mu}(s,x,y)]dx\Big|^2dy\\
		&\qquad\le\|\varphi\|^2_{L^2(\brd)}|s'-s|\|g_r\|^2_{L^2([s,s'];L^2(\brd\times\brd))}\\
		&\qquad\le \|\varphi\|^2_{L^2(\brd)} |s'-s|C(\gamma,L,T,\|\mu\|_{W^{1,2}(\brd)}).
	\end{align*}
	The proof is complete.
\end{proof}

\begin{remark}
	Quite related interesting results can be found in \cite{TSE} on regularity of solutions of FP equations defined on the space of probability measures on a torus. However, they require that the diffusion coefficient is a positive constant, and our PF equation \eqref{pde_m} is not covered.
\end{remark}

\section{Derivatives of $X^{t,x,\mu}$}\label{SDE}
In this section, we consider the derivatives of $X^{t,x,\mu}$ with respect to $x$ and $\mu$. We always suppose that Assumptions (H1)-(H3) hold true. Let $m^{t,\mu}$ be the solution of the FP equation \eqref{pde_m}. From Assumption (H2) and standard arguments of SDEs, we know that for any $p\geq 2$, SDE \eqref{sde} has a unique solution such that
\begin{equation}\label{X}
	\e[\sup_{t\le s\le T}|X_s^{t,x,\mu}|^p]\le C(p,L,T,|x|).
\end{equation}
Moreover, from Proposition~\ref{lemma6}, for all $x,x'\in\brd$ and $\mu,\mu'\in W^{1,2}(\brd)$,
\begin{equation}\label{deltaX}
	\begin{split}
		&\e[\sup_{t\le s\le T}|X_s^{t,x',\mu'}-X_s^{t,x,\mu}|^p] \\
		&\qquad\le C(p,\gamma,L,T,\|\mu\|_{W^{1,2}(\brd)})(|x'-x|^p+\|\mu'-\mu\|^p_{L^2(\brd)}).
	\end{split}
\end{equation}

We begin with the derivatives of $X^{t,x,\mu}$ with respect to $x$. It is standard to show that the derivative $\dd _xX^{t,x,\mu}=(\dd_{x_i} X_j^{t,x,\mu})_{1\le i,j\le d}\in\mathcal{S}_{\f}^2([t,T];\br^{d\times d})$ is the unique solution of the following SDE
\begin{equation*}
	\begin{split}
		\dd _{x}X^{t,x,\mu}_{s}=I+\int_t^s b_x(X^{t,x,\mu}_r,m^{t,\mu}_r)\dd _{x}X^{t,x,\mu}_{r}dr+\int_t^s \sigma_x(X^{t,x,\mu}_r,m^{t,\mu}_r)\dd _{x}X^{t,x,\mu}_{r} dB_r,
	\end{split}
\end{equation*}
with $s\in[t,T]$, and the derivative $\dd _{x}^2X^{t,x,\mu}$ is the unique solution in the class $\mathcal{S}_{\f}^2([t,T];\br^{d\times d\times d})$ of the following SDE
\begin{equation*}
	\begin{split}
		\dd _x^2X^{t,x,\mu}_s=&\int_t^s b_x(X^{t,x,\mu}_r,m^{t,\mu}_r)\dd^2 _xX^{t,x,\mu}_r+(\dd _xX^{t,x,\mu}_r)^*b_{xx}(X^{t,x,\mu}_r,m^{t,\mu}_r)(\dd _xX^{t,x,\mu}_r)dr\\
		&+\int_t^s \sigma_x(X^{t,x,\mu}_r,m^{t,\mu}_r)\dd^2 _xX^{t,x,\mu}_r+(\dd _xX^{t,x,\mu}_r)^*\sigma_{xx}(X^{t,x,\mu}_r,m^{t,\mu}_r)(\dd _xX^{t,x,\mu}_r) dB_r,
	\end{split}
\end{equation*}
with $s\in[t,T]$. With standard arguments for SDEs, the following proposition is an immediate consequence of Assumption (H2), estimate \eqref{deltaX} and Proposition~\ref{lemma6}.
\begin{proposition}\label{lem2.0}
	Let Assumptions (H1) and (H2) be satisfied and $p\geq 2$. Then, for any $x,x'\in\brd$ and $\mu,\mu'\in W^{1,2}(\brd)$, we have
	\begin{align}
		&\e[\sup_{t\le s\le T}|(\dd_xX_s^{t,x,\mu},\dd_x^2X_s^{t,x,\mu})|^p]\le C(p,L,T);\label{lem2.0_1}\\
		&\e[\sup_{t\le s\le T}|\dd_xX_s^{t,x',\mu'}-\dd_xX_s^{t,x,\mu}|^p+\sup_{t\le s\le T}|\dd_x^2X_s^{t,x',\mu'}-\dd_x^2X_s^{t,x,\mu}|^p]\label{lem2.0_2}\\
		&\qquad \le C(p,\gamma,L,T,\|\mu\|_{W^{1,2}(\brd)})(|x'-x|^p+\|\mu'-\mu\|^p_{L^2(\brd)}).\notag
	\end{align}
\end{proposition}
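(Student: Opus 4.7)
\emph{Plan.} Both claims follow from the standard BDG $+$ Gr\"onwall recipe applied to the linear matrix-valued SDEs for $\partial_x X^{t,x,\mu}$ and $\partial_x^2 X^{t,x,\mu}$ written just before the proposition, with all the analytic inputs already available: uniform $L^\infty$-bounds on $b_x, b_{xx}, \sigma_x, \sigma_{xx}$ and Lipschitz continuity of $b_x, \sigma_x$ in $(x,m)$ from Assumption (H2); the state Lipschitz estimate \eqref{deltaX}; and the density Lipschitz estimate of Proposition~\ref{lemma6}. No new PDE input is needed.

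\emph{For \eqref{lem2.0_1}}, I would take the integral form of the SDE for $\partial_x X^{t,x,\mu}$, raise it to the $p$-th power, take $\sup$ in time then expectation, and apply BDG on the stochastic integral together with $|b_x|,|\sigma_x|\le L$ to obtain
\[
\e\bigl[\sup_{t\le r\le s}|\partial_x X_r^{t,x,\mu}|^p\bigr] \le C(p) + C(p,L,T)\int_t^s \e\bigl[\sup_{t\le r\le u}|\partial_x X_r^{t,x,\mu}|^p\bigr]\,du,
\]
so Gr\"onwall closes the first-order bound. The second-order bound uses the same recipe for $\partial_x^2 X^{t,x,\mu}$, now treating the quadratic-in-$\partial_x X$ forcings $(\partial_x X)^* b_{xx}(\partial_x X)$ and $(\partial_x X)^* \sigma_{xx}(\partial_x X)$ as known source, controlled in $L^{p/2}$ by the first-order bound together with the uniform bounds on $b_{xx},\sigma_{xx}$.

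\emph{For \eqref{lem2.0_2}}, I would subtract the two linearized SDEs to get an equation for $\partial_x X^{t,x',\mu'} - \partial_x X^{t,x,\mu}$ whose ``diagonal'' part has coefficients $b_x(X^{t,x',\mu'},m^{t,\mu'})$ and $\sigma_x(X^{t,x',\mu'},m^{t,\mu'})$, and whose source is
\[
\bigl[b_x(X^{t,x',\mu'}_s, m^{t,\mu'}(s,\cdot)) - b_x(X^{t,x,\mu}_s, m^{t,\mu}(s,\cdot))\bigr]\partial_x X^{t,x,\mu}_s
\]
plus the analogous $\sigma_x$ expression under the It\^o integral. By Lipschitz continuity of $b_x,\sigma_x$ in $(x,m)$ from (H2), this source is dominated pointwise by $\bigl(|X^{t,x',\mu'}_s - X^{t,x,\mu}_s| + \|m^{t,\mu'}(s,\cdot) - m^{t,\mu}(s,\cdot)\|_{L^2(\brd)}\bigr)|\partial_x X^{t,x,\mu}_s|$. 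H\"older's inequality, the $L^{2p}$-bound on $\partial_x X^{t,x,\mu}$ from the previous step, estimate \eqref{deltaX}, and Proposition~\ref{lemma6} then bound the $L^p$-norm of the source by $C(|x'-x|^p + \|\mu'-\mu\|^p_{L^2(\brd)})$. A last BDG $+$ Gr\"onwall application delivers the $\partial_x X$ part of \eqref{lem2.0_2}, and repeating the manoeuvre for $\partial_x^2 X^{t,x',\mu'} - \partial_x^2 X^{t,x,\mu}$ gives the second-order part.

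\emph{Main obstacle.} The only nontrivial bookkeeping is in the subtracted SDE for $\partial_x^2 X$, since after splitting one encounters a source term proportional to $[b_{xx}(X^{t,x',\mu'}, m^{t,\mu'}) - b_{xx}(X^{t,x,\mu}, m^{t,\mu})](\partial_x X^{t,x,\mu})^{\otimes 2}$ and an analogous $\sigma_{xx}$ contribution, which requires continuity (not just boundedness) of the second-order coefficients in $(x,m)$. Under (H2) this continuity holds directly for $\sigma_{xx}$, and for $b_{xx}$ it follows a.e.\ from the $L$-Lipschitz property of $b_x$; since everything is being integrated against a law with a square-integrable density, this a.e.\ control is enough for the Gr\"onwall step to go through and the proposition drops out.
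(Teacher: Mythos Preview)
Your proposal is correct and is precisely the ``standard arguments for SDEs'' that the paper invokes without detail: the paper merely states that the proposition is an immediate consequence of Assumption~(H2), estimate~\eqref{deltaX} and Proposition~\ref{lemma6}, and you have spelled out exactly that BDG $+$ Gr\"onwall argument. Your caveat about the continuity of $b_{xx}$ is a fair observation on the paper's hypotheses (which list $b_x$ but not $b_{xx}$ among the Lipschitz functionals), but since the paper itself writes down the SDE for $\partial_x^2 X$ with $b_{xx}$ and treats it as routine, this is a minor gap in the paper's stated assumptions rather than in your argument.
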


In the rest of this section, we consider the derivative of $X^{t,x,\mu}$ with respect to $\mu$. For $h\in(0,1]$ and along the direction $\tilde{\mu}\in L^2(\brd)$, we define
\begin{align*}
	Y_s^{t,x,\mu}(\tilde{\mu},h):=\frac{1}{h}(X_s^{t,x,\mu+h\tilde{\mu}}-X_s^{t,x,\mu}),\quad s\in[t,T].
\end{align*}
Then, $Y^{t,x,\mu}(\tilde{\mu},h)$ satisfies the following SDE:
\begin{equation}\label{sde_Y}
	\begin{split}
		&Y^{t,x,\mu}_s(\tilde{\mu},h)=\\
		&\int_t^s\int_0^1\Big[b_x(X^{t,x,\mu}_r+\lambda hY^{t,x,\mu}_r(\tilde{\mu},h),m^{t,\mu+h\tilde{\mu}}(r,\cdot))\cdot Y^{t,x,\mu}_r(\tilde{\mu},h)\\
		&\ +\int_{\brd} \frac{\dd b}{\dd m}(X^{t,x,\mu}_r,m^{t,\mu}(r,\cdot)+\lambda h\tm_h^{t,\mu}(\tilde{\mu})(r,\cdot))(\xi)\tm_h^{t,\mu}(\tilde{\mu})(r,\xi) d\xi \Big] d\lambda dr\\
		&\ +\int_t^s\int_0^1\Big[\sigma_x(X^{t,x,\mu}_r+\lambda hY^{t,x,\mu}_r(\tilde{\mu},h),m^{t,\mu+h\tilde{\mu}}(r,\cdot))\cdot Y^{t,x,\mu}_r(\tilde{\mu},h)\\
		&\ +\int_{\brd} \frac{\dd\sigma}{\dd m}(X^{t,x,\mu}_r,m^{t,\mu}(r,\cdot)+\lambda h\tm_h^{t,\mu}(\tilde{\mu})(r,\cdot))(\xi)\tm_h^{t,\mu}(\tilde{\mu})(r,\xi) d\xi \Big] d\lambda dB_r,
	\end{split}
\end{equation}
with $s\in[t,T]$, where $\tilde{m}_h^{t,\mu}(\tilde{\mu})$ is defined as in \eqref{m_h:def}. From Assumption (H2) and estimate \eqref{lem3_1}, using standard arguments of SDEs, we have for $p\geq 2$,
\begin{equation}\label{estimate_Y}
	\begin{split}
		\e[\sup_{t\le s\le T}|Y^{t,x,\mu}_s(\tilde{\mu},h)|^p]&\le C(p,\gamma,L,T,\|\mu\|_{W^{1,2}(\brd)})\|\tilde{\mu}\|^p_{L^2(\brd)}.
	\end{split}
\end{equation}
Let $\tm^{t,\mu}(\tilde{\mu})$ be the unique solution of PDE \eqref{tm} (see Lemma~\ref{thm4}) and $Y^{t,x,\mu}(\tilde{\mu})$ be the unique solution in the space $\mathcal{S}_{\f}^2[t,T]$ of the following SDE:
\begin{equation}\label{sde_Y'}
	\begin{aligned}
		&Y^{t,x,\mu}_s(\tilde{\mu})=\int_t^s\Big[b_x(X^{t,x,\mu}_r,m^{t,\mu}(r,\cdot)) Y^{t,x,\mu}_r(\tilde{\mu})\\
		&\qquad\qquad\qquad +\int_{\brd} \frac{\dd b}{\dd m}(X^{t,x,\mu}_r,m^{t,\mu}(r,\cdot))(\xi)\tm^{t,\mu}(\tilde{\mu})(r,\xi) d\xi \Big]dr\\
		&\quad\qquad\qquad+\int_t^s\Big[\sigma_x(X^{t,x,\mu}_r,m^{t,\mu}(r,\cdot)) Y^{t,x,\mu}_r(\tilde{\mu})\\
		&\qquad\qquad\qquad +\int_{\brd} \frac{\dd\sigma}{\dd m}(X^{t,x,\mu}_r,m^{t,\mu}(r,\cdot))(\xi)\tm^{t,\mu}(\tilde{\mu})(r,\xi) d\xi \Big]dB_r,\  s\in[t,T].
	\end{aligned}
\end{equation}
From Assumption (H2) and Lemma~\ref{thm4},  using standard arguments of SDEs, we have for $p\geq 2$,
\begin{align}\label{estimate_Y'}
	\e[\sup_{t\le s\le T}|Y^{t,x,\mu}_s(\tilde{\mu})|^p]\le C(p,\gamma,L,T,\|\mu\|_{W^{1,2}(\brd)})\|\tilde{\mu}\|^p_{L^2(\brd)},
\end{align}
and (further in view of Proposition~\ref{lemma6} and estimates \eqref{deltaX} and \eqref{estimate_Y'},)
\begin{equation}\label{estimate_deY'}
	\begin{split}
		&\e[\sup_{t\le s\le T}|Y^{t,x',\mu'}_s(\tilde{\mu})-Y^{t,x,\mu}_s(\tilde{\mu})|^2]\\
		&\qquad\le C(\gamma,L,T,\|\mu\|_{W^{1,2}(\brd)},\|\mu'\|_{W^{1,2}(\brd)})\|\tilde{\mu}\|^2_{L^2(\brd)}\\
		&\quad\qquad\times(|x'-x|^2+\|\mu'-\mu\|^2_{L^2(\brd)}).
	\end{split}
\end{equation}

We have the following convergence of $Y^{t,x,\mu}(\tilde{\mu},h)$ to $Y^{t,x,\mu}(\tilde{\mu})$ in $\mathcal{S}^2_{\f}[t,T]$ as $h$ goes to $0$, whose proof is given in Appendix~\ref{pf_lem2.3}.

\begin{lemma}\label{lem2.3}
	Let Assumptions (H1) and (H2) be satisfied, $\mu\in W^{1,2}(\brd)$ and $\tilde{\mu}\in L^2(\brd)$. Then,
	\begin{equation*}
		\begin{split}
			\e[\sup_{t\le s\le T}|\frac{1}{h}(X_s^{t,x,\mu+h\tilde{\mu}}-X_s^{t,x,\mu})-Y^{t,x,\mu}_s(\tilde{\mu})|^2]\le C(\gamma,L,T,\|\mu\|_{W^{1,2}(\brd)},\|\tilde{\mu}\|_{L^2(\brd)})h^2.
		\end{split}
	\end{equation*}
	That is, $Y_s^{t,x,\mu}(\tilde{\mu})$ is the derivative of $X^{t,x,\mu}_s$ along the direction $\tilde{\mu}\in L^2(\brd)$.
\end{lemma}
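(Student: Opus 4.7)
\textbf{Proof plan for Lemma~\ref{lem2.3}.}
The plan is to derive a linear SDE for the difference $Z^h_s := Y^{t,x,\mu}_s(\tilde{\mu},h) - Y^{t,x,\mu}_s(\tilde{\mu})$, identify the inhomogeneous terms as remainders that are $O(h)$ in the appropriate norm, and then close the estimate by BDG combined with Gronwall. Subtracting \eqref{sde_Y'} from \eqref{sde_Y} and adding and subtracting $b_x(X^{t,x,\mu}_r,m^{t,\mu}(r,\cdot))Y^{t,x,\mu}_r(\tilde\mu)$ (and the analogous term for $\sigma_x$), and splitting the integral with respect to the measure into three pieces, we obtain
\begin{equation*}
Z^h_s = \int_t^s\bigl[b_x(X^{t,x,\mu}_r,m^{t,\mu}(r,\cdot))Z^h_r + R^{h,1}_r + R^{h,2}_r + R^{h,3}_r\bigr]dr + \int_t^s\bigl[\sigma_x(X^{t,x,\mu}_r,m^{t,\mu}(r,\cdot))Z^h_r + \widetilde R^{h,1}_r + \widetilde R^{h,2}_r + \widetilde R^{h,3}_r\bigr]dB_r,
\end{equation*}
where $R^{h,1}_r$ gathers the $x$-Lipschitz and $m$-Lipschitz defect of $b_x$ at $(X^{t,x,\mu}_r+\lambda h Y^{t,x,\mu}_r(\tilde\mu,h),m^{t,\mu+h\tilde\mu}(r,\cdot))$ applied to $Y^{t,x,\mu}_r(\tilde\mu,h)$, the term $R^{h,2}_r$ is the $m$-Lipschitz defect of $\frac{\dd b}{\dd m}$ between $m^{t,\mu}(r,\cdot)+\lambda h\tm_h^{t,\mu}(\tilde\mu)(r,\cdot)$ and $m^{t,\mu}(r,\cdot)$ integrated against $\tm_h^{t,\mu}(\tilde\mu)(r,\cdot)$, and $R^{h,3}_r=\int_{\brd}\frac{\dd b}{\dd m}(X^{t,x,\mu}_r,m^{t,\mu}(r,\cdot))(\xi)[\tm_h^{t,\mu}(\tilde\mu)(r,\xi)-\tm^{t,\mu}(\tilde\mu)(r,\xi)]d\xi$, with $\widetilde R^{h,i}_r$ being the $\sigma$-analogues.

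Each remainder is controlled directly by the assumptions and the estimates already in hand. For $R^{h,1}_r$, Assumption~(H2) (Lipschitz continuity of $b_x$ in $(x,m)$) and Proposition~\ref{lemma6} give $|R^{h,1}_r|\le Lh|Y^{t,x,\mu}_r(\tilde\mu,h)|^2 + C(\gamma,L,T,\|\mu\|_{W^{1,2}(\brd)})h\|\tilde\mu\|_{L^2(\brd)}|Y^{t,x,\mu}_r(\tilde\mu,h)|$, so $\mathbb E\!\int_t^T|R^{h,1}_r|^2 dr \le C h^2$ by \eqref{estimate_Y}. For $R^{h,2}_r$, Cauchy--Schwarz followed by the $L^2$-Lipschitz bound on $\frac{\dd b}{\dd m}$ from Assumption~(H2) yields $|R^{h,2}_r|\le Lh\|\tm_h^{t,\mu}(\tilde\mu)(r,\cdot)\|^2_{L^2(\brd)}$, whence \eqref{lem3_1} gives $\mathbb E\!\int_t^T|R^{h,2}_r|^2dr\le Ch^2$. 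For $R^{h,3}_r$, Cauchy--Schwarz together with the uniform bound on $\frac{\dd b}{\dd m}$ and Lemma~\ref{lem5} produces $|R^{h,3}_r|\le L\|\tm_h^{t,\mu}(\tilde\mu)(r,\cdot)-\tm^{t,\mu}(\tilde\mu)(r,\cdot)\|_{L^2(\brd)}\le C(\gamma,L,T,\|\mu\|_{W^{1,2}(\brd)},\|\tilde\mu\|_{L^2(\brd)})h$. The same bounds hold for $\widetilde R^{h,i}_r$.

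Combining, setting $\psi(s):=\mathbb E[\sup_{t\le u\le s}|Z^h_u|^2]$, and applying BDG to the stochastic integral together with Cauchy--Schwarz on the drift integral, one obtains
\begin{equation*}
\psi(s) \le C(L,T)\int_t^s \psi(r)\,dr + C(\gamma,L,T,\|\mu\|_{W^{1,2}(\brd)},\|\tilde\mu\|_{L^2(\brd)})h^2,
\end{equation*}
and Gronwall's inequality delivers the claimed bound $\psi(T)\le Ch^2$. The only subtle step is $R^{h,3}_r$: its control rests on the $O(h)$ convergence of $\tm_h^{t,\mu}(\tilde\mu)$ to $\tm^{t,\mu}(\tilde\mu)$ in $L^\infty([t,T];L^2(\brd))$ proved in Lemma~\ref{lem5}; everything else is a routine Lipschitz bookkeeping argument on top of the moment bounds \eqref{estimate_Y} and the regularity estimates \eqref{lem3_1}, Proposition~\ref{lemma6} and Lemma~\ref{thm4}. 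I expect the main obstacle to be purely organizational, namely keeping the six remainder terms (three for the drift and three for the diffusion) properly classified so that the Lipschitz constants from Assumption~(H2) match the norms in which the relevant quantities were estimated in the preceding lemmas.
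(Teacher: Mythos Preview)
Your proposal is correct and follows essentially the same route as the paper: write the SDE for the difference $Z^h=Y(\tilde\mu,h)-Y(\tilde\mu)$, split the inhomogeneity into a $b_x$-Lipschitz defect, a $\frac{\dd b}{\dd m}$-Lipschitz defect, and a term carrying $\tm_h-\tm$ (controlled by Lemma~\ref{lem5}), and close with BDG/Gronwall using \eqref{estimate_Y}, \eqref{lem3_1} and Proposition~\ref{lemma6}. The only cosmetic difference is that the paper keeps the perturbed coefficient $b_x(X_r+\lambda h Y_r(\tilde\mu,h),m^{t,\mu+h\tilde\mu})$ in the linear part and applies the Lipschitz defects to $Y_r(\tilde\mu)$ and $\tm^{t,\mu}(\tilde\mu)$ rather than to $Y_r(\tilde\mu,h)$ and $\tm_h^{t,\mu}(\tilde\mu)$, which is an equivalent bookkeeping choice.
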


Lemma~\ref{lem2.3} and estimates \eqref{estimate_Y'} and \eqref{estimate_deY'} show that the mapping
\begin{align*}
	L^2(\brd)\ni\mu\mapsto X^{t,x,\mu}_s\in L^2(\f_s;\br^d)
\end{align*}
is Fr\'echet differentiable for $\mu\in W^{1,2}(\brd)$. Its Fr\'echet derivative $D_\mu X^{t,x,\mu}_s$ is
\begin{align*}
	D_\mu X^{t,x,\mu}_s(\tilde{\mu})=Y_s^{t,x,\mu}(\tilde{\mu}),\qquad\tilde{\mu}\in L^2(\brd).
\end{align*}
To represent the Fr\'echet derivative $D_\mu X^{t,x,\mu}_s$, we define the process $U^{t,x,\mu}(y)$ as
\begin{equation}\label{sde_U}
	\begin{aligned}
		&U^{t,x,\mu}_s(y)=\int_t^s\Big[b_x(X^{t,x,\mu}_r,m^{t,\mu}(r,\cdot)) U^{t,x,\mu}_r(y)\\
		&\qquad\qquad\qquad +\int_{\brd} \frac{\dd b}{\dd m}(X^{t,x,\mu}_r,m^{t,\mu}(r,\cdot))(\xi)k^{t,\mu}(r,\xi,y) d\xi \Big]dr\\
		&\quad\qquad\qquad+\int_t^s\Big[\sigma_x(X^{t,x,\mu}_r,m^{t,\mu}(r,\cdot))U^{t,x,\mu}_r(y)\\
		&\qquad\qquad\qquad +\int_{\brd} \frac{\dd\sigma}{\dd m}(X^{t,x,\mu}_r,m^{t,\mu}(r,\cdot))(\xi)k^{t,\mu}(r,\xi,y) d\xi \Big]dB_r,\  s\in[t,T].
	\end{aligned}
\end{equation}
The following proposition is a direct consequence of Proposition~\ref{lem7} and the uniqueness of solutions of SDE \eqref{sde_Y'}.
\begin{proposition}\label{lem2.4}
	Let Assumptions (H1) and (H2) be satisfied and $\mu\in W^{1,2}(\brd)$. Let $U^{t,x,\mu}(\cdot)\in L^2(\brd;\mathcal{S}_\f^2[t,T])$ be the solution of equation \eqref{sde_U}. Then, the derivative $Y_s^{t,x,\mu}(\tilde{\mu})$ of $X_s^{t,x,\mu}$ along the direction $\tilde{\mu}\in L^2(\brd)$ satisfies
	\begin{align*}
		Y_s^{t,x,\mu}(\tilde{\mu})=\int_{\brd} U_s^{t,x,\mu}(y)\tilde{\mu}(y)dy,\quad s\in [t,T].
	\end{align*}
	That is, the Fr\'echet derivative $D_\mu X^{t,x,\mu}_s$ of $X^{t,x,\mu}_s$ satisfies
	\begin{align*}
		D_\mu X^{t,x,\mu}_s(\tilde{\mu})=\int_{\brd} U^{t,x,\mu}_s(y)\tilde{\mu}(y)dy,\qquad\tilde{\mu}\in L^2(\brd).
	\end{align*}
\end{proposition}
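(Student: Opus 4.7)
The plan is to define the candidate
\[
\hat{Y}_s := \int_{\brd} U^{t,x,\mu}_s(y)\,\tilde{\mu}(y)\,dy, \qquad s\in[t,T],
\]
and show it satisfies the same SDE \eqref{sde_Y'} as $Y^{t,x,\mu}(\tilde{\mu})$; then uniqueness in $\mathcal{S}^2_{\f}[t,T]$ (which is standard under Assumption (H2), as already invoked for \eqref{sde_Y'}) forces $\hat{Y}=Y^{t,x,\mu}(\tilde{\mu})$, which is exactly the claimed identity.

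First I would check that $\hat{Y}$ is well-defined and lies in $\mathcal{S}^2_{\f}[t,T]$. Writing the SDE \eqref{sde_U} for $U^{t,x,\mu}(y)$, a Gronwall argument on $\e[\sup_{t\le s\le T}|U^{t,x,\mu}_s(y)|^2]$, together with the estimate \eqref{lem12_1} applied to $\varphi=\frac{\dd b}{\dd m}(X^{t,x,\mu}_r,m^{t,\mu}(r,\cdot))(\cdot)$ and to the analogous $\sigma$-term (both uniformly bounded in $x$ by $L$ in $L^2(\brd)$ via Assumption (H2)), yields a bound of the form
\[
\int_{\brd}\e[\sup_{t\le s\le T}|U^{t,x,\mu}_s(y)|^2]\,dy
\;\;\mbox{controlled in a suitable averaged sense,}
\]
and more importantly the Cauchy--Schwarz bound
\[
\e\Big[\sup_{t\le s\le T}|\hat{Y}_s|^2\Big]
\le C(\gamma,L,T,\|\mu\|_{W^{1,2}(\brd)})\,\|\tilde{\mu}\|_{L^2(\brd)}^2,
\]
so $\hat{Y}\in\mathcal{S}^2_{\f}[t,T]$.

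Next, I would integrate \eqref{sde_U} against $\tilde{\mu}(y)\,dy$ and apply the (classical and stochastic) Fubini theorem to interchange $\int_{\brd}\cdots dy$ with $\int_t^s\cdots dr$ and with $\int_t^s\cdots dB_r$. Two simplifications then occur in each drift/diffusion term: the linear-in-$U$ part gives
\[
\int_{\brd} b_x(X^{t,x,\mu}_r,m^{t,\mu}(r,\cdot))\,U^{t,x,\mu}_r(y)\,\tilde{\mu}(y)\,dy
= b_x(X^{t,x,\mu}_r,m^{t,\mu}(r,\cdot))\,\hat{Y}_r,
\]
while the $k$-integral part gives, by Fubini and Proposition~\ref{lem7},
\[
\int_{\brd}\tilde{\mu}(y)\int_{\brd}\tfrac{\dd b}{\dd m}(X^{t,x,\mu}_r,m^{t,\mu}(r,\cdot))(\xi)\,k^{t,\mu}(r,\xi,y)\,d\xi\,dy
=\int_{\brd}\tfrac{\dd b}{\dd m}(X^{t,x,\mu}_r,m^{t,\mu}(r,\cdot))(\xi)\,\tm^{t,\mu}(\tilde{\mu})(r,\xi)\,d\xi,
\]
and the same with $b$ replaced by $\sigma$. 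Thus $\hat{Y}$ satisfies \eqref{sde_Y'}.

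The main obstacle is the justification of the stochastic Fubini exchange; it requires checking that the relevant integrand on $[t,T]\times\Omega\times\brd$ (in particular the $\sigma$-driven one) is in $L^2$ after pairing with $\tilde{\mu}(y)$. This reduces to controlling $\int_{\brd}\e[\int_t^T |U^{t,x,\mu}_r(y)|^2 dr]|\tilde{\mu}(y)|^2 dy$ and, by Cauchy--Schwarz, the square of $\int_{\brd}|\tilde{\mu}(y)|\{\e\int_t^T |\int_{\brd}\frac{\dd\sigma}{\dd m}(\cdot)(\xi)k^{t,\mu}(r,\xi,y)d\xi|^2 dr\}^{1/2}dy$; both are finite thanks to the $L^2(\brd)$-estimate \eqref{lem12_1} (applied pointwise in $r$ and then integrated) and the gain-in-time bound on $U^{t,x,\mu}(\cdot)$ derived above. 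Once Fubini is validated, uniqueness of solutions to \eqref{sde_Y'} in $\mathcal{S}^2_{\f}[t,T]$ closes the argument and yields the desired representation, from which the Fr\'echet-derivative formula follows immediately.
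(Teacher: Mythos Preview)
Your approach is essentially the paper's: the paper states this proposition as ``a direct consequence of Proposition~\ref{lem7} and the uniqueness of solutions of SDE \eqref{sde_Y'}'', and you have correctly unpacked what that means---integrate \eqref{sde_U} against $\tilde{\mu}(y)\,dy$, use Proposition~\ref{lem7} to turn $\int k^{t,\mu}(r,\xi,y)\tilde{\mu}(y)\,dy$ into $\tm^{t,\mu}(\tilde{\mu})(r,\xi)$, and invoke uniqueness for \eqref{sde_Y'}. One small remark: you appeal to \eqref{lem12_1} (hence implicitly to Assumption~(H3)) to control $U$ and the $k$-integrals, whereas the proposition as stated only assumes (H1)--(H2) together with the hypothesis $U^{t,x,\mu}(\cdot)\in L^2(\brd;\mathcal{S}_\f^2[t,T])$; you can simply use that hypothesis directly for the $\hat{Y}\in\mathcal{S}^2_\f[t,T]$ bound and for the stochastic Fubini check, which keeps the argument within the stated assumptions.
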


The following proposition shows the boundedness and the continuous dependence of $U^{t,x,\mu}_s(\cdot)$ with respect to $(x, \mu)$, which is proved in Appendix~\ref{pf_lem2.5}.

\begin{proposition}\label{lem2.5}
	Let Assumptions (H1)-(H3) be satisfied and $p\geq 2$. Then, for all $x,x'\in\brd$ and $\mu,\mu'\in W^{1,2}(\brd)$, we have the following estimates
	\begin{align}
		&\e\Big[\int_{\brd}\sup_{t\le s\le T}|U^{t,x,\mu}_s(y)|^2dy\Big]\le C(\gamma,L,T,\|\mu\|_{W^{1,2}(\brd)});\label{lem2.5_1}\\
		&\e\Big[\sup_{t\le s\le T}\Big|\int_{\brd}|U^{t,x,\mu}_s(y)|^2dy\Big|^p\Big]\le C(p,\gamma,L,T,\|\mu\|_{W^{1,2}(\brd)});\label{lem2.5_1'}\\
		&\e\Big[\int_{\brd}\sup_{t\le s\le T}|U^{t,x',\mu'}_s(y)-U^{t,x,\mu}_s(y)|^2dy\Big]\label{lem2.5_2}\\
		&\qquad \le C(\gamma,L,T,\|\mu\|_{W^{1,2}(\brd)},\|\mu'\|_{W^{1,2}(\brd)})(|x'-x|^2+\|\mu'-\mu\|^2_{W^{1,2}(\brd)}).\notag
	\end{align}
\end{proposition}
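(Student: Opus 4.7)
The plan is to view \eqref{sde_U} as a linear SDE in $U^{t,x,\mu}(y)$ parameterized by $y\in\brd$, with forcing terms
$\psi_b^{t,\mu}(r,y):=\int_{\brd}\tfrac{\dd b}{\dd m}(X^{t,x,\mu}_r,m^{t,\mu}(r,\cdot))(\xi)k^{t,\mu}(r,\xi,y)d\xi$ and, analogously, $\psi_\sigma^{t,\mu}(r,y)$, and to combine standard Burkholder--Davis--Gundy (BDG) and Gronwall estimates with the $L^2(\brd,dy)$-bounds on the kernel $k^{t,\mu}$ supplied by Proposition~\ref{lem12}. For \eqref{lem2.5_1} I would apply BDG and Gronwall in the linear SDE to obtain, for each $y$,
$\e[\sup_{t\le s\le T}|U^{t,x,\mu}_s(y)|^2]\le C(L,T)\int_t^T\e[|\psi_b^{t,\mu}(r,y)|^2+|\psi_\sigma^{t,\mu}(r,y)|^2]dr$,
then integrate over $y$ by Fubini and invoke \eqref{lem12_1} with test function $\varphi=\tfrac{\dd b}{\dd m}(X^{t,x,\mu}_r,m^{t,\mu}(r,\cdot))(\cdot)$, whose $L^2(\brd)$-norm is pathwise bounded by $L$ thanks to (H2). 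This bounds $\|\psi_b^{t,\mu}(r,\cdot)\|_{L^2(\brd)}$ by $C(\gamma,L,T,\|\mu\|_{W^{1,2}(\brd)})$ (and the same works for $\psi_\sigma^{t,\mu}$), which yields \eqref{lem2.5_1}.

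For \eqref{lem2.5_1'}, set $V_s:=\int_{\brd}|U^{t,x,\mu}_s(y)|^2dy$. I would apply It\^o's formula to $|U_s(y)|^2$ pointwise in $y$ and exchange with the $dy$-integration by a stochastic Fubini argument (justified by the integrability obtained in Step 1) to get
\begin{equation*}
V_s=\int_t^s\!\int_{\brd}\!\bigl\{2U_r(y)^{*}[b_x U_r(y)+\psi_b^{t,\mu}]+|\sigma_x U_r(y)+\psi_\sigma^{t,\mu}|^2\bigr\}\,dy\,dr+M_s,
\end{equation*}
with $M_s$ a continuous local martingale. Taking $p$-th moments, applying BDG and Young's inequality to $M_s$ and then Gronwall, together with the pathwise $L^2(\brd,dy)$-bound on $\psi_b^{t,\mu},\psi_\sigma^{t,\mu}$ coming from Proposition~\ref{lem12}, closes the loop and delivers \eqref{lem2.5_1'}.

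For \eqref{lem2.5_2}, set $\Delta U_s(y):=U^{t,x',\mu'}_s(y)-U^{t,x,\mu}_s(y)$. It satisfies a linear SDE with coefficients $(b_x,\sigma_x)(X^{t,x',\mu'}_\cdot,m^{t,\mu'}(\cdot,\cdot))$ multiplying $\Delta U$, plus inhomogeneous terms of two kinds: (i) $[b_x(X^{t,x',\mu'},m^{t,\mu'})-b_x(X^{t,x,\mu},m^{t,\mu})]U^{t,x,\mu}$, and its $\sigma_x$-analogue, handled by (H2)-Lipschitzness, \eqref{deltaX}, Proposition~\ref{lemma6} and the moment bound \eqref{lem2.5_1'}; and (ii) $\psi_b^{t,\mu'}-\psi_b^{t,\mu}$, and its $\sigma$-analogue. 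For (ii) I would use the splitting
\begin{align*}
\psi_b^{t,\mu'}(r,y)-\psi_b^{t,\mu}(r,y)
&=\int_{\brd}\!\Bigl[\tfrac{\dd b}{\dd m}(X^{t,x',\mu'}_r,m^{t,\mu'}(r,\cdot))-\tfrac{\dd b}{\dd m}(X^{t,x,\mu}_r,m^{t,\mu}(r,\cdot))\Bigr](\xi)\,k^{t,\mu'}(r,\xi,y)\,d\xi\\
&\quad+\int_{\brd}\tfrac{\dd b}{\dd m}(X^{t,x,\mu}_r,m^{t,\mu}(r,\cdot))(\xi)\,[k^{t,\mu'}-k^{t,\mu}](r,\xi,y)\,d\xi,
\end{align*}
estimating the first summand by \eqref{lem12_1} (with parameter $\mu'$) applied to the test function $\tfrac{\dd b}{\dd m}(X',m')-\tfrac{\dd b}{\dd m}(X,m)$, whose $L^2(\brd)$-norm is at most $C(|X'-X|+\|m'-m\|_{L^2})$ by (H2), and the second summand by \eqref{lem12_2} applied to the $W^{1,2}(\brd)$-test function $\tfrac{\dd b}{\dd m}(X^{t,x,\mu}_r,m^{t,\mu}(r,\cdot))(\cdot)$ (of norm $\le L$ by (H2)), producing the factor $\|\mu'-\mu\|_{W^{1,2}(\brd)}$. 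Running the Step~1 BDG/Gronwall machinery for $\Delta U$ and integrating in $y$ then yields \eqref{lem2.5_2}. The main obstacle is this bookkeeping: the splitting must be arranged so that \emph{only} the $W^{1,2}(\brd)$-bounded part of $\tfrac{\dd b}{\dd m}$ (and $\tfrac{\dd\sigma}{\dd m}$) is paired with the kernel difference $k^{t,\mu'}-k^{t,\mu}$, since \eqref{lem12_2} is the single tool that trades such a kernel difference for the factor $\|\mu'-\mu\|_{W^{1,2}(\brd)}$ needed on the right-hand side.
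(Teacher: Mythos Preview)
Your proposal is correct and follows essentially the same route as the paper's proof: for \eqref{lem2.5_1} and \eqref{lem2.5_1'} the paper likewise uses the linear-SDE BDG/Gronwall estimate pointwise in $y$ (respectively, It\^o on $\int_{\brd}|U_s(y)|^2dy$) and then invokes Proposition~\ref{lem12} with the $L^2$-bounded test functions $\tfrac{\dd b}{\dd m},\tfrac{\dd\sigma}{\dd m}$; for \eqref{lem2.5_2} the paper uses the same decomposition into a coefficient-difference term and a kernel-difference term, the only cosmetic difference being that it pairs $k^{t,\mu'}-k^{t,\mu}$ with $\tfrac{\dd c}{\dd m}(X^{t,x',\mu'},m^{t,\mu'})$ rather than with $\tfrac{\dd c}{\dd m}(X^{t,x,\mu},m^{t,\mu})$, which is immaterial since both have $W^{1,2}$-norm $\le L$ by (H2). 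Your final remark about arranging the splitting so that \eqref{lem12_2} sees a $W^{1,2}$-bounded test function is exactly the crux, and you have it right.
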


\section{Regularity of the value function}\label{regularity}
In this section, we study the regularity of the value function
\begin{equation}\label{v}
	V(t,x,\mu):=\e[\Phi(X_T^{t,x,\mu},m^{t,\mu}(T,\cdot))], \quad (t,x,\mu)\in[0,T]\times\brd\times W^{1,2}(\brd),
\end{equation}
where $m^{t,\mu}$ is the solution of the FP equation \eqref{pde_m} and $X^{t,x,\mu}$ is the solution of SDE \eqref{sde}. We need the following assumption on function $\Phi$.

\textbf{(H4)} The function $\Phi:\brd\times L^2(\brd)\to\br$ is twice differentiable in $x$ and differentiable in $m$, with the derivative $\frac{\dd \Phi}{\dd m}(x,m)(\cdot)\in W^{1,2}(\brd)$ for all $(x,m)\in \brd\times L^2(\brd)$. The function and the derivatives are uniformly bounded by $L$. That is, for all $(t,x,\mu)\in[0,T]\times\brd\times L^2(\brd)$ and $1\le i,j\le d$,
\begin{align*}
	|(\Phi,\frac{\dd \Phi}{\dd x_i},\frac{\dd^2 \Phi}{\dd x_i\dd x_j})(x,m)|+\|\frac{\dd \Phi}{\dd m}(x,m)(\cdot)\|_{W^{1,2}(\brd)}\le L.
\end{align*}
Moreover, the function and the derivatives are $L$-Lipschitz continuous for $1\le i,j\le d$. That is, for all $(x,m),(x',m')\in\brd\times L^2(\brd)$ and $1\le i,j\le d$,
\begin{equation*}
	\begin{split}
		&|(\Phi,\frac{\dd \Phi}{\dd x_i},\frac{\dd^2 \Phi}{\dd x_i\dd x_j})(x',m')-(\Phi,\frac{\dd \Phi}{\dd x_i},\frac{\dd^2 \Phi}{\dd x_i\dd x_j})(x,m)|\\
		&\qquad+\|\frac{\dd \Phi}{\dd m}(x',m')(\cdot)-\frac{\dd \Phi}{\dd m}(x,m)(\cdot)\|_{L^2(\brd)}\le L(|x'-x|+\|m'-m\|_{L^2(\brd)}).
	\end{split}
\end{equation*}

The following proposition is proved in Appendix~\ref{pf_thm3.1}.
\begin{proposition}\label{thm3.1}
	Let Assumptions (H1)-(H4) be satisfied. Then, for all $t\in[0,T]$, the function $V(t,\cdot,\cdot)$ is twice differentiable in $x$ and differentiable in $\mu$ for $(x,\mu)\in \brd\times W^{1,2}(\brd)$, and the derivatives are of the form
	\begin{align}
		&\frac{\dd V}{\dd x_i}(t,x,\mu)=\sum_{k=1}^{d}\e\Big[\frac{\dd \Phi}{\dd x_k}(X_T^{t,x,\mu},m^{t,\mu}(T,\cdot))\dd_{x_i}X_{T,k}^{t,x,\mu}\Big],\quad1\le i\le d,\label{V_x}\\
		&\frac{\dd^2 V}{\dd x_i\dd x_j}(t,x,\mu)=\sum_{k,l=1}^{d}\e\Big[\frac{\dd^2 \Phi}{\dd x_k\dd x_l}(X_T^{t,x,\mu},m^{t,\mu}(T,\cdot))\dd_{x_i}X_{T,k}^{t,x,\mu}\dd_{x_j}X_{T,l}^{t,x,\mu}\label{V_xx}\\
		&\quad\qquad\qquad\qquad\qquad+\frac{\dd \Phi}{\dd x_k}(X_T^{t,x,\mu},m^{t,\mu}(T,\cdot))\dd^2_{x_ix_j}X_{T,k}^{t,x,\mu}\Big],\quad1\le i,j\le d,\notag\\
		&\frac{\dd V}{\dd \mu}(t,x,\mu)(y)=\sum_{k=1}^d\e\Big[\frac{\dd \Phi}{\dd x_k}(X_T^{t,x,\mu},m^{t,\mu}(T,\cdot))U_{T,k}^{t,x,\mu}(y)\Big]\label{V_mu}\\
		&\quad\qquad\qquad\qquad+\e\Big[\int_{\brd}\frac{\dd \Phi}{\dd m}(X_T^{t,x,\mu},m^{t,\mu}(T,\cdot))(\xi)k^{t,\mu}(T,\xi,y)d\xi\Big].\notag
	\end{align}
	Moreover, for all $(t,x,\mu),(t',x',\mu')\in[0,T]\times\brd\times W^{1,2}(\brd)$ and $1\le i,j\le d$, we have the following estimates
	\begin{align*}
		&|(V,\frac{\dd V}{\dd x_i},\frac{\dd^2 V}{\dd x_i\dd x_j})(t,x,\mu)|+\|\frac{\dd V}{\dd \mu}(t,x,\mu)(\cdot)\|_{L^2(\brd)}\le C(\gamma,L,T,\|\mu\|_{W^{1,2}(\brd)});\\
		&|(V,\frac{\dd V}{\dd x_i},\frac{\dd^2 V}{\dd x_i\dd x_j})(t',x',\mu')-(V,\frac{\dd V}{\dd x_i},\frac{\dd^2 V}{\dd x_i\dd x_j})(t,x,\mu)|\\
		&+\|\frac{\dd V}{\dd \mu}(t',x',\mu')(\cdot)-\frac{\dd V}{\dd \mu}(t,x,\mu)(\cdot)\|_{L^2(\brd)}\\
		&\qquad\le C(\gamma,L,T,\|\mu\|_{W^{1,2}(\brd)},\|\mu'\|_{W^{1,2}(\brd)})(|t'-t|^{\frac{1}{2}}+|x'-x|+\|\mu'-\mu\|_{ W^{1,2}(\brd)}).
	\end{align*}
\end{proposition}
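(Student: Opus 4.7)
The plan is to differentiate under the expectation in the defining formula $V(t,x,\mu)=\e[\Phi(X_T^{t,x,\mu},m^{t,\mu}(T,\cdot))]$ and apply the chain rule. For the $x$-derivatives, since $m^{t,\mu}$ is independent of $x$, only $\Phi(\cdot,m^{t,\mu}(T,\cdot))$ needs to be differentiated along $\dd_xX^{t,x,\mu}$ and $\dd_x^2X^{t,x,\mu}$. The exchange of expectation and derivative is justified by dominated convergence, with uniform integrability provided by (H4) and the $L^p$-bounds \eqref{lem2.0_1}, and pointwise convergence by Proposition~\ref{lem2.0}. This yields \eqref{V_x} and \eqref{V_xx}. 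For the Fr\'echet derivative in $\mu$ along $\tilde{\mu}\in L^2(\brd)$, I start from the mean-value decomposition
\begin{align*}
	&\frac{1}{h}\bigl[\Phi(X_T^{t,x,\mu+h\tilde\mu}, m^{t,\mu+h\tilde\mu}(T,\cdot)) - \Phi(X_T^{t,x,\mu}, m^{t,\mu}(T,\cdot))\bigr]\\
	&\quad = \int_0^1 \sum_{k=1}^d \frac{\dd\Phi}{\dd x_k}\bigl(X_T^{t,x,\mu}+\lambda h Y,\, m^{t,\mu+h\tilde\mu}(T,\cdot)\bigr)\,Y_k\,d\lambda\\
	&\qquad +\int_0^1\!\int_{\brd} \frac{\dd\Phi}{\dd m}\bigl(X_T^{t,x,\mu},\, m^{t,\mu}(T,\cdot)+\lambda h\tm^{t,\mu}_h(\tilde\mu)(T,\cdot)\bigr)(\xi)\,\tm^{t,\mu}_h(\tilde\mu)(T,\xi)\,d\xi\,d\lambda,
\end{align*}
where $Y:=(X_T^{t,x,\mu+h\tilde\mu}-X_T^{t,x,\mu})/h$. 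Passing to the limit $h\to 0^+$ using Lemma~\ref{lem2.3} (for $Y\to Y_T^{t,x,\mu}(\tilde\mu)$ in $L^2(\Omega)$), Lemma~\ref{lem5} (for $\tm^{t,\mu}_h(\tilde\mu)(T,\cdot)\to\tm^{t,\mu}(\tilde\mu)(T,\cdot)$ in $L^2(\brd)$), and the Lipschitz continuity of $\dd\Phi/\dd x$ and $\dd\Phi/\dd m$ from (H4), together with dominated convergence, identifies the G\^ateaux derivative. Substituting the kernel representations $Y_T^{t,x,\mu}(\tilde\mu)=\int_\brd U_T^{t,x,\mu}(y)\tilde\mu(y)dy$ from Proposition~\ref{lem2.4} and $\tm^{t,\mu}(\tilde\mu)(T,\xi)=\int_\brd k^{t,\mu}(T,\xi,y)\tilde\mu(y)dy$ from Proposition~\ref{lem7}, and invoking Fubini, yields \eqref{V_mu}; the continuity of this Gâteaux derivative in $\tilde\mu$ upgrades it to a Fr\'echet derivative.

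The uniform boundedness of $V$ and its derivatives follows from the $L^\infty$-bounds in (H4) combined with: \eqref{lem2.0_1} for $\dd_xX$ and $\dd_x^2 X$; \eqref{lem2.5_1} for the $L^2(dy)$-norm of $U_T^{t,x,\mu}(\cdot)$; and \eqref{lem12_1} applied to $\varphi(\xi)=\frac{\dd\Phi}{\dd m}(X_T^{t,x,\mu},m^{t,\mu}(T,\cdot))(\xi)$, which lies in $L^2(\brd)$ with norm at most $L$ by (H4), to control the $L^2(dy)$-norm of $\int_\brd \varphi(\xi)k^{t,\mu}(T,\xi,y)d\xi$. The $(x,\mu)$-Lipschitz estimates at fixed $t$ are obtained by telescoping each difference into single-factor changes and invoking (H4) together with the continuous-dependence estimates \eqref{deltaX}, \eqref{lem2.0_2}, Proposition~\ref{lemma6}, \eqref{lem2.5_2}, and \eqref{lem12_2}; the last of these is what forces the $\mu$-difference in the final bound to be measured in $W^{1,2}(\brd)$ rather than $L^2(\brd)$.

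The time continuity is the most delicate step. I plan to exploit the flow property
\begin{equation*}
	V(t,x,\mu) = \e\bigl[V\bigl(t',\, X_{t'}^{t,x,\mu},\, m^{t,\mu}(t',\cdot)\bigr)\bigr], \qquad 0\le t\le t'\le T,
\end{equation*}
which follows from the uniqueness of solutions of \eqref{pde_m} and \eqref{sde}. Combined with the $(x,\mu)$-Lipschitz bound just obtained, this yields
\begin{equation*}
	|V(t,x,\mu)-V(t',x,\mu)| \le C\bigl(\e[|X_{t'}^{t,x,\mu}-x|] + \|m^{t,\mu}(t',\cdot)-\mu\|_{W^{1,2}(\brd)}\bigr).
\end{equation*}
The first term is $O(|t'-t|^{1/2})$ by standard SDE moment estimates. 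The second is the main obstacle: one needs a $W^{1,2}$-H\"older-in-time bound on $m^{t,\mu}$ assuming only $\mu\in W^{1,2}(\brd)$. The natural route is either to use the PDE form \eqref{pde_m} together with the $L^2$-in-time bound on $m^{t,\mu}_s$ from \eqref{estimate_m} and an interpolation argument, or to approximate $\mu$ in $W^{1,2}$ by a sequence in $W^{2,2}$ to which Proposition~\ref{thm1'} (in particular \eqref{thm1'_1} and \eqref{thm1''_1}) applies, and then pass to the limit using Proposition~\ref{lemma6}. An analogous flow argument for $\dd V/\dd x$, $\dd^2 V/\dd x^2$ and $\dd V/\dd\mu$, written via the tangent-process chain rules $\dd_x X_T^{t,x,\mu} = \dd_x X_{t'}^{t,x,\mu}\cdot \dd_x X_T^{t',X_{t'}^{t,x,\mu},m^{t,\mu}(t',\cdot)}$ (and its counterpart for $U^{t,x,\mu}$ and $k^{t,\mu}$), together with the just-established $(x,\mu)$-Lipschitz bounds on the derivatives, produces the $\tfrac12$-H\"older dependence on $t$ of each derivative. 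Verifying that the half-H\"older exponent in $t$ survives the flow argument for $\dd V/\dd \mu$, given that it is measured in $L^2(dy)$ while the $(x,\mu)$-Lipschitz bound charges $\|\mu'-\mu\|_{W^{1,2}}$, is the most delicate part of the proof.
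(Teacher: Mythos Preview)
Your derivation of \eqref{V_x}--\eqref{V_mu} and of the boundedness and $(x,\mu)$-Lipschitz estimates matches the paper's approach: differentiate under the expectation via the results of Sections~\ref{PDE2}--\ref{SDE}, and combine (H4) with Propositions~\ref{lemma6}, \ref{lem12}, \ref{lem2.0}, \ref{lem2.5} and estimate~\eqref{deltaX}.

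Where you diverge from the paper is the time regularity, and here your route has a real gap. You propose the dynamic-programming identity $V(t,x,\mu)=\e[V(t',X_{t'}^{t,x,\mu},m^{t,\mu}(t',\cdot))]$ together with the $(x,\mu)$-Lipschitz bound on $V$ and its derivatives. For $\frac{\dd V}{\dd \mu}$ this forces you to control $\|m^{t,\mu}(t',\cdot)-\mu\|_{W^{1,2}(\brd)}$ by $|t'-t|^{1/2}$. Under the hypotheses of Proposition~\ref{thm3.1} (only (H1)--(H4), $\mu\in W^{1,2}(\brd)$) this estimate is \emph{not} available: the $W^{1,2}$-in-time H\"older bound \eqref{thm1''_1} requires (H2') and $\mu\in W^{2,2}(\brd)$, and your approximation workaround fails because the constant in \eqref{thm1''_1} depends on $\|\mu\|_{W^{2,2}(\brd)}$, which blows up along a $W^{1,2}$-approximating sequence. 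The interpolation idea does not help either, since you only have $m^{t,\mu}\in L^2([t,T];W^{2,2})$, not $L^\infty$ in time.

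The paper avoids this entirely by a \emph{time-shift} rather than a flow argument: using that $(m^{t,\mu}_{\cdot+t},X^{t,x,\mu}_{\cdot+t})$ and $(m^{0,\mu},X^{0,x,\mu})$ have the same law, one writes $V(t,x,\mu)=\e[\Phi(X_{T-t}^{0,x,\mu},m^{0,\mu}(T-t,\cdot))]$ and compares the explicit formulas \eqref{V_x}--\eqref{V_mu} at the two final times $T-t$ and $T-t'$. Since $\Phi$ and its derivatives are Lipschitz in $m$ only in the $L^2$-norm (H4), this needs merely the $L^2$-H\"older-in-time bound on $m^{0,\mu}$ from \eqref{estimate_m}, the direct SDE increment bounds $\e|X_{T-t'}^{0,x,\mu}-X_{T-t}^{0,x,\mu}|^p\le C|t'-t|^{p/2}$ (and their analogues for $\dd_xX$, $\dd_x^2X$, $U$), and the time-H\"older estimate \eqref{lem12_3} on the kernel $k^{0,\mu}$. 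No $W^{1,2}$-in-time control of $m$ is needed, and no tangent-process chain rule for the flow is required.
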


\section{PDE associated with a Mean-Field SDE}\label{PDE_V}
We first give the definition of the solution space.
\begin{definition}\label{def:f}
	We say that $f\in C^{1,2,1}_{b}$, if $f:[0,T]\times\brd\times L^2(\brd)\to\br$ is differentiable in $(t,\mu)$ and twice differentiable in $x$ for $(t,x,\mu)\in[0,T]\times\brd\times W^{{ 3},2}(\brd)$, and for all $(t,x,\mu),(t',x',\mu')\in[0,T]\times\brd\times W^{3,2}(\brd)$ and $1\le i,j\le d$,
	\begin{align*}
		&|(f,\frac{\dd f}{\dd t},\frac{\dd f}{\dd x_i},\frac{\dd^2 f}{\dd x_i\dd x_j})(t,x,\mu)|+\|\frac{\dd f}{\dd \mu}(t,x,\mu)(\cdot)\|_{L^2(\brd)}\le C(\|\mu\|_{W^{3,2}(\brd)});\\
		&|(f,\frac{\dd f}{\dd t},\frac{\dd f}{\dd x_i},\frac{\dd^2 f}{\dd x_i\dd x_j})(t',x',\mu')-(f,\frac{\dd f}{\dd t},\frac{\dd f}{\dd x_i},\frac{\dd^2 f}{\dd x_i\dd x_j})(t,x,\mu)|\\
		&+\|\frac{\dd f}{\dd \mu}(t',x',\mu')(\cdot)-\frac{\dd f}{\dd \mu}(t,x,\mu)(\cdot)\|_{L^2(\brd)}\\
		&\qquad\le C(\|\mu\|_{ W^{3,2}(\brd)},\|\mu'\|_{ W^{3,2}(\brd)})(|t'-t|^{\frac{1}{2}}+|x'-x|+\|\mu'-\mu\|_{W^{2,2}(\brd)}).
	\end{align*}
\end{definition}

We first establish the It\^o's formula for $f\in  C^{1,2,1}_b$, which is proved in Appendix~\ref{pf_forall}.
\begin{lemma}\label{forall}
	Let Assumptions (H1) and (H2') be satisfied. Then, for $f\in C^{1,2,1}_b$, $0\le t\le s\le T$, $x\in\brd$ and $\mu\in W^{3,2}(\brd)$, we have
	\begin{equation}\label{forall_1}
		\begin{split}		
			&f(s,X_s^{t,x,\mu},m^{t,\mu}(s,\cdot))-f(t,x,\mu)\\
			&=\int_t^s \Big[\frac{\dd f}{\dd r}(r,X_r^{t,x,\mu},m^{t,\mu}(r,\cdot))\\
			&\qquad+\sum_{i=1}^d \frac{\dd f}{\dd x_i}(r,X_r^{t,x,\mu},m^{t,\mu}(r,\cdot))b_i(X_r^{t,x,\mu},m^{t,\mu}(r,\cdot))\\
			&\qquad+ \sum_{i,j=1}^d\frac{\dd^2 f}{\dd x_i\dd x_j}(r,X_r^{t,x,\mu},m^{t,\mu}(r,\cdot))a_{ij}(X_r^{t,x,\mu},m^{t,\mu}(r,\cdot)\\
			&\qquad +\int_{\brd}\frac{\dd f}{\dd \mu}(r,X_r^{t,x,\mu},m^{t,\mu}(r,\cdot))(\xi)\frac{\dd m^{t,\mu}}{\dd r}(r,\xi)d\xi\Big]dr\\
			&\qquad + \sum_{i,j=1}^d\int_t^s\frac{\dd f}{\dd x_i}(r,X_r^{t,x,\mu},m^{t,\mu}(r,\cdot))\sigma_{ij}(X_r^{t,x,\mu},m^{t,\mu}(r,\cdot))dB_r^j.
		\end{split}
	\end{equation}
\end{lemma}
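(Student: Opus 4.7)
The plan is to establish the It\^o formula by a standard partition-and-telescope argument, handling the three sources of variation (time, space, measure) separately. Fix a partition $\pi:t=s_0<s_1<\cdots<s_n=s$ with mesh $|\pi|\to 0$, and for each $k$ decompose
\begin{align*}
&f(s_k,X_{s_k}^{t,x,\mu},m^{t,\mu}(s_k,\cdot))-f(s_{k-1},X_{s_{k-1}}^{t,x,\mu},m^{t,\mu}(s_{k-1},\cdot))\\
&\quad =\bigl[f(s_k,X_{s_k}^{t,x,\mu},m^{t,\mu}(s_k,\cdot))-f(s_k,X_{s_k}^{t,x,\mu},m^{t,\mu}(s_{k-1},\cdot))\bigr]\\
&\quad\ \ +\bigl[f(s_k,X_{s_k}^{t,x,\mu},m^{t,\mu}(s_{k-1},\cdot))-f(s_k,X_{s_{k-1}}^{t,x,\mu},m^{t,\mu}(s_{k-1},\cdot))\bigr]\\
&\quad\ \ +\bigl[f(s_k,X_{s_{k-1}}^{t,x,\mu},m^{t,\mu}(s_{k-1},\cdot))-f(s_{k-1},X_{s_{k-1}}^{t,x,\mu},m^{t,\mu}(s_{k-1},\cdot))\bigr],
\end{align*}
and sum over $k$. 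Proposition~\ref{thm1'} (namely \eqref{cor2_1} and \eqref{thm1'''_1}) guarantees that $r\mapsto m^{t,\mu}(r,\cdot)$ remains in $W^{3,2}(\brd)$ uniformly in $r\in[t,T]$ and is $\tfrac12$-H\"older-continuous in $W^{2,2}(\brd)$, so every evaluation of $f$ and its derivatives along the path sits in the regime where Definition~\ref{def:f} supplies bounds and moduli of continuity.

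For the measure piece, I would apply the fundamental theorem of calculus for the linear functional derivative,
\begin{align*}
&f(s_k,X_{s_k}^{t,x,\mu},m^{t,\mu}(s_k,\cdot))-f(s_k,X_{s_k}^{t,x,\mu},m^{t,\mu}(s_{k-1},\cdot))\\
&\quad =\int_0^1\!\!\int_{\brd}\frac{\dd f}{\dd\mu}\bigl(s_k,X_{s_k}^{t,x,\mu},m^{t,\mu}(s_{k-1},\cdot)+\lambda[m^{t,\mu}(s_k,\cdot)-m^{t,\mu}(s_{k-1},\cdot)]\bigr)(\xi)\\
&\qquad\qquad\times\bigl[m^{t,\mu}(s_k,\xi)-m^{t,\mu}(s_{k-1},\xi)\bigr]\,d\xi\,d\lambda,
\end{align*}
and then rewrite $m^{t,\mu}(s_k,\cdot)-m^{t,\mu}(s_{k-1},\cdot)=\int_{s_{k-1}}^{s_k}\frac{\dd m^{t,\mu}}{\dd r}(r,\cdot)\,dr$. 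Summing over $k$ gives an expression that, up to a remainder which vanishes with $|\pi|$, equals $\int_t^s\!\int_{\brd}\frac{\dd f}{\dd\mu}(r,X_r^{t,x,\mu},m^{t,\mu}(r,\cdot))(\xi)\,\frac{\dd m^{t,\mu}}{\dd r}(r,\xi)\,d\xi\,dr$; control of the remainder uses the $W^{2,2}$-Lipschitz/H\"older continuity of $\dd f/\dd\mu$ from Definition~\ref{def:f} together with \eqref{thm1'''_1}, and boundedness of $\|\tfrac{\dd m^{t,\mu}}{\dd r}\|_{L^2([t,T];L^2(\brd))}$ from \eqref{cor2_1}.

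For the space piece, the map $y\mapsto f(s_k,y,m^{t,\mu}(s_{k-1},\cdot))$ is $C^2$ with bounded derivatives (Definition~\ref{def:f}), so the classical It\^o formula applied on $[s_{k-1},s_k]$ with respect to $X^{t,x,\mu}$ yields
\begin{align*}
&f(s_k,X_{s_k}^{t,x,\mu},m^{t,\mu}(s_{k-1},\cdot))-f(s_k,X_{s_{k-1}}^{t,x,\mu},m^{t,\mu}(s_{k-1},\cdot))\\
&\quad =\int_{s_{k-1}}^{s_k}\!\Bigl[\sum_i\tfrac{\dd f}{\dd x_i}\,b_i+\sum_{i,j}\tfrac{\dd^2 f}{\dd x_i\dd x_j}\,a_{ij}\Bigr](s_k,X_r^{t,x,\mu},m^{t,\mu}(s_{k-1},\cdot))\,dr\\
&\qquad +\sum_{i,j}\int_{s_{k-1}}^{s_k}\tfrac{\dd f}{\dd x_i}(s_k,X_r^{t,x,\mu},m^{t,\mu}(s_{k-1},\cdot))\,\sigma_{ij}(X_r^{t,x,\mu},m^{t,\mu}(r,\cdot))\,dB_r^j,
\end{align*}
where $b$ and $a$ are evaluated at $(X_r^{t,x,\mu},m^{t,\mu}(r,\cdot))$ via the SDE \eqref{sde}. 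Summing in $k$ and using the joint H\"older continuity of $f_{x_i}, f_{x_ix_j}$ from Definition~\ref{def:f}, the path-continuity of $r\mapsto X_r^{t,x,\mu}$, and the $W^{2,2}$-H\"older continuity of $r\mapsto m^{t,\mu}(r,\cdot)$, the arguments $(s_k,m^{t,\mu}(s_{k-1},\cdot))$ can be replaced by $(r,m^{t,\mu}(r,\cdot))$ in the limit; the martingale term is handled by an $L^2$-It\^o-isometry estimate. The time piece is elementary: $f(s_k,X_{s_{k-1}},m^{t,\mu}(s_{k-1},\cdot))-f(s_{k-1},X_{s_{k-1}},m^{t,\mu}(s_{k-1},\cdot))=\int_{s_{k-1}}^{s_k}\frac{\dd f}{\dd r}(r,X_{s_{k-1}},m^{t,\mu}(s_{k-1},\cdot))\,dr$, and the same continuity argument produces $\int_t^s\frac{\dd f}{\dd r}(r,X_r^{t,x,\mu},m^{t,\mu}(r,\cdot))\,dr$.

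The principal obstacle is the measure piece: one must simultaneously control two infinite-dimensional moduli of continuity (that of $\dd f/\dd\mu$ with respect to the $W^{2,2}$-norm in the measure argument, and that of $r\mapsto m^{t,\mu}(r,\cdot)$ in $W^{2,2}$) against the $L^2$-norm integration against $\frac{\dd m^{t,\mu}}{\dd r}$. The quantitative inputs are exactly \eqref{cor2_1}--\eqref{thm1'''_1}; without the higher regularity in (H2') and $\mu\in W^{3,2}(\brd)$, these would not be available at the required level. All limits are ultimately dominated using the uniform bounds in Definition~\ref{def:f}, Proposition~\ref{thm1'}, and the moment bound \eqref{X}, and a routine tightness/dominated-convergence argument extracts identity \eqref{forall_1}.
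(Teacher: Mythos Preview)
Your partition-and-telescope argument is correct and would yield \eqref{forall_1}, but it takes a genuinely different route from the paper's proof. The paper exploits the crucial fact that $r\mapsto m^{t,\mu}(r,\cdot)$ is \emph{deterministic}: it first fixes $x\in\brd$ and proves, by a dyadic partition in time only, the purely deterministic chain rule
\[
f(s,x,m^{t,\mu}(s,\cdot))-f(t,x,\mu)=\int_t^s\Big[\frac{\dd f}{\dd r}(r,x,m^{t,\mu}(r,\cdot))+\int_{\brd}\frac{\dd f}{\dd\mu}(r,x,m^{t,\mu}(r,\cdot))(\xi)\,\frac{\dd m^{t,\mu}}{\dd r}(r,\xi)\,d\xi\Big]dr.
\]
This says that the auxiliary function $\phi(s,x):=f(s,x,m^{t,\mu}(s,\cdot))$ is $C^{1,2}$ in $(s,x)$ with the displayed time derivative; the paper then applies the \emph{classical} It\^o formula once to $\phi(s,X_s^{t,x,\mu})$ and reads off \eqref{forall_1}. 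Thus all remainder estimates in the paper are deterministic and no $L^2$/isometry control of stochastic error terms is ever needed.

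Your approach instead keeps $X_{s_k}^{t,x,\mu}$ in the measure and time pieces from the outset, which forces you to control random remainders (e.g.\ replacing $\frac{\dd f}{\dd\mu}(s_k,X_{s_k},\cdot)$ by $\frac{\dd f}{\dd\mu}(r,X_r,\cdot)$ requires joint continuity in $x$ along the random path, and the stochastic-integral term needs an It\^o-isometry limit). This is the standard template for mean-field It\^o formulas where the measure argument is itself the law of a random variable, but here it is heavier than necessary. The paper's two-step decoupling is cleaner precisely because the measure flow is a PDE solution rather than a stochastic object; your method works but does not take advantage of this simplification.
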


\begin{remark}\label{rk_1}
	From Assumption (H4) we know that $\Phi\in C_b^{1,2,1}$, so we can apply It\^o's formula in Lemma~\ref{forall} for $\Phi$ when $(x,\mu)\in\brd\times W^{3,2}(\brd)$. Actually, the conditions of $\Phi$ in  Assumption (H4) is stronger than the conditions of $f\in C_b^{1,2,1}$ in Definition~\ref{def:f}, so It\^o's formula holds true for $\Phi$ when Assumption (H2) is satisfied and $(x,\mu)\in\brd\times W^{1,2}(\brd)$. The proof is a simplified version of the proof of Lemma~\ref{forall}, so it is omitted here. It\^o's formula for $\Phi$ allows us to show that our value function $V(t,x,\mu)$ is continuously differentiable with respect to $t$.
\end{remark}

\begin{proposition}\label{4.2}
	Let Assumptions (H1), (H2) and (H4) be satisfied. Then, function $V(\cdot,x,\mu)$ is differentiable for $(x,\mu)\in\brd\times W^{1,2}(\brd)$, with the derivative of the form
	\begin{equation}\label{4.2_1}
		\begin{split}
			\frac{\dd V}{\dd t}(t,x,\mu)=-\e\Big[&\sum_{i=1}^db_i(X_T^{t,x,\mu},m^{t,\mu}(T,\cdot))\frac{\dd \Phi}{\dd x_i}(X_T^{t,x,\mu},m^{t,\mu}(T,\cdot))\\
			&+\sum_{i,j=1}^da_{ij}(X_T^{t,x,\mu},m^{t,\mu}(T,\cdot))\frac{\dd^2 \Phi}{\dd x_i\dd x_j}(X_T^{t,x,\mu},m^{t,\mu}(T,\cdot))\\
			&+\int_{\brd}\frac{\dd \Phi}{\dd m}(X_T^{t,x,\mu},m^{t,\mu}(T,\cdot))(\xi)\frac{\dd m^{t,\mu}}{\dd s}(T,\xi)d\xi\Big].
		\end{split}	
	\end{equation}
	If moreover Assumption (H2') holds true, then for all $(t,x,\mu),(t',x',\mu')\in[0,T]\times\brd\times W^{{3},2}(\brd)$, we have the following estimates
	\begin{align}
		&|\frac{\dd V}{\dd t}(t,x,\mu)|\le C(\gamma,L,T,\|\mu\|_{W^{2,2}(\brd)}),\label{4.2_1.5}\\
		&|\frac{\dd V}{\dd t}(t',x',\mu')-\frac{\dd V}{\dd t}(t,x,\mu)|\le C(\gamma,L,T,\|\mu\|_{W^{{3},2}(\brd)},\|\mu'\|_{W^{{3},2}(\brd)})\label{4.2_2}\\
		&\qquad\qquad\qquad\qquad\qquad\qquad\qquad \times(|t'-t|^{\frac{1}{2}}+|x'-x|+\|\mu'-\mu\|_{W^{2,2}(\brd)}).\notag
	\end{align}
\end{proposition}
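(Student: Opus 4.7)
The plan is to exploit the time-homogeneity of $b,\sigma$ (which depend only on $(x,m)$ and not on $s$) to reduce $\frac{\dd V}{\dd t}$ to a one-parameter limit, and then to invoke the It\^o formula of Lemma~\ref{forall} applied to $\Phi$ (permissible under (H1)--(H2)--(H4) for $(x,\mu)\in\brd\times W^{1,2}(\brd)$, as indicated in Remark~\ref{rk_1}). Because both \eqref{pde_m} and \eqref{sde} are autonomous, for any $h\in[0,T-t]$ one has $m^{t+h,\mu}(T,\cdot)=m^{t,\mu}(T-h,\cdot)$ and $X_{T}^{t+h,x,\mu}\stackrel{d}{=}X_{T-h}^{t,x,\mu}$, whence
\begin{equation*}
V(t+h,x,\mu)=\e\big[\Phi\big(X_{T-h}^{t,x,\mu},m^{t,\mu}(T-h,\cdot)\big)\big].
\end{equation*}

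Write $\mathscr{G}\Phi(r)$ for the integrand on the right-hand side of \eqref{4.2_1} with $T$ replaced by $r$, i.e.\ the full generator of $\Phi$ at $(X_{r}^{t,x,\mu},m^{t,\mu}(r,\cdot))$. Applying Lemma~\ref{forall} to $\Phi$ on $[T-h,T]$ and taking expectation to remove the martingale term yields
\begin{equation*}
V(t,x,\mu)-V(t+h,x,\mu)=\int_{T-h}^{T}\e[\mathscr{G}\Phi(r)]\,dr.
\end{equation*}
Dividing by $h$ and sending $h\downarrow 0$, the right-hand side converges to $\e[\mathscr{G}\Phi(T)]$ by continuity of $r\mapsto\e[\mathscr{G}\Phi(r)]$ at $r=T$ (which in turn uses $L^2$-continuity of $r\mapsto X_r^{t,x,\mu}$, the H\"older estimates for $m^{t,\mu}$ and $m^{t,\mu}_s$ from Proposition~\ref{thm1'}, and the Lipschitz hypotheses in (H2) and (H4)). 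This yields \eqref{4.2_1}; a symmetric argument on the interval $[T,T+h]$ handles the left derivative and completes differentiability.

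The bound \eqref{4.2_1.5} then follows at once from (H4), (H2)/(H2'), and the estimate $\|m^{t,\mu}_s(T,\cdot)\|_{L^2(\brd)}\le C(\gamma,L,T,\|\mu\|_{W^{2,2}(\brd)})$ of Proposition~\ref{thm1'}. For the continuity estimate \eqref{4.2_2}, I would split $\frac{\dd V}{\dd t}(t',x',\mu')-\frac{\dd V}{\dd t}(t,x,\mu)$ into variation in $(x,\mu)$ at fixed $t$ and variation in $t$ at fixed $(x,\mu)$. The first, expanded term by term from \eqref{4.2_1}, is controlled by combining the Lipschitz continuity of $b,a,\frac{\dd\Phi}{\dd x_i},\frac{\dd^2\Phi}{\dd x_i\dd x_j},\frac{\dd\Phi}{\dd m}$ with estimate \eqref{deltaX} for $X_T^{t,x,\mu}$ and \eqref{thm1'_2} for $m^{t,\mu}(T,\cdot)$ in $W^{2,2}(\brd)$ and $m^{t,\mu}_s(T,\cdot)$ in $L^2(\brd)$. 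The second piece, via the time-shift identity $V(t,x,\mu)=\e[\Phi(X^{0,x,\mu}_{T-t},m^{0,\mu}(T-t,\cdot))]$, reduces to a $\tfrac12$-H\"older estimate in $s$ of the scalar function $s\mapsto\e[\mathscr{G}\Phi(s)]$.

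The main obstacle is this last $t$-H\"older piece: it requires $\tfrac12$-H\"older continuity in $s$ of $m^{t,\mu}(s,\cdot)$ in $W^{2,2}(\brd)$ and of $m^{t,\mu}_s(s,\cdot)$ in $L^2(\brd)$. These are precisely the bounds \eqref{thm1'''_1} furnished by Proposition~\ref{thm1'} under (H2'), and they are what forces the hypothesis $\mu\in W^{3,2}(\brd)$ in \eqref{4.2_2}. Once these H\"older bounds are in hand, combined with the standard $\tfrac12$-H\"older bound on $s\mapsto X_s^{t,x,\mu}$ in $L^2(\Omega)$, the remaining term-wise comparison is routine.
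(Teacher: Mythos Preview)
Your proposal is correct and follows essentially the same route as the paper: both arguments apply the It\^o formula of Lemma~\ref{forall} to $\Phi$ (as licensed by Remark~\ref{rk_1}), use the time-homogeneity of $(b,\sigma)$ to convert the $t$-derivative of $V$ into an endpoint evaluation of the resulting time integral, and then control \eqref{4.2_1.5}--\eqref{4.2_2} term by term via the estimates of Lemma~\ref{thm1}, Propositions~\ref{lemma6} and~\ref{thm1'}, and \eqref{deltaX}--\eqref{thm3.1_1}. Your write-up is simply more explicit than the paper's ``it is evident'' about the time-shift identity and the continuity needed for the fundamental-theorem-of-calculus step.
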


\begin{proof}
	From Remark~\ref{rk_1} and the definition of $V$, we have
	\begin{align*}
		V(t,x,\mu)-V(T,x,\mu)&=\e\Big[\int_t^T\sum_{i=1}^d\frac{\dd \Phi}{\dd x_i}(X^{t,x,\mu}_s,m^{t,\mu}(s,\cdot))b_i(X^{t,x,\mu}_s,m^{t,\mu}(s,\cdot))\\
		&\qquad +\sum_{i,j=1}^{d}\frac{\dd^2 \Phi}{\dd x_i\dd x_j}(X^{t,x,\mu}_s,m^{t,\mu}(s,\cdot))a_{ij}(X^{t,x,\mu}_s,m^{t,\mu}(s,\cdot))\\
		&\qquad+\int_{\brd} \frac{\dd\Phi}{\dd m}(X^{t,x,\mu}_s,m^{t,\mu}(s,\cdot))(\xi)\frac{\dd m^{t,\mu}}{\dd s}(s,\xi) d\xi ds\Big],
	\end{align*}
    for $(t,x,\mu)\in [0,T]\times\brd\times W^{1,2}(\brd)$. Then it is evident that \eqref{4.2_1} holds true. The proof of \eqref{4.2_1.5} and \eqref{4.2_2} is the analogy to the proof of Proposition~\ref{thm3.1}, by using Lemma~\ref{thm1}, Propositions~\ref{lemma6} and \ref{thm1'} and estimates \eqref{deltaX} and \eqref{thm3.1_1}.
\end{proof}

From Assumptions (H1), (H2'), (H3) and (H4) and Propositions~\ref{thm3.1} and \ref{4.2}, we know that $V\in C_b^{1,2,1}$. From Lemma~\ref{forall}, we know that for $0\le t\le s\le T$ and $(x,\mu)\in\brd\times W^{3,2}(\brd)$,
\begin{equation}\label{remark_1}
	\begin{split}		
		&V(s,X_s^{t,x,\mu},m^{t,\mu}(s,\cdot))-V(t,x,\mu)\\
		&=\int_t^s \Big[\frac{\dd V}{\dd r}(r,X_r^{t,x,\mu},m^{t,\mu}(r,\cdot))\\
		&\quad+\sum_{i=1}^d \frac{\dd V}{\dd x_i}(r,X_r^{t,x,\mu},m^{t,\mu}(r,\cdot))b_i(X_r^{t,x,\mu},m^{t,\mu}(r,\cdot))\\
		&\quad+ \sum_{i,j=1}^d\frac{\dd^2 V}{\dd x_i\dd x_j}(r,X_r^{t,x,\mu},m^{t,\mu}(r,\cdot))a_{ij}(X_r^{t,x,\mu},m^{t,\mu}(r,\cdot)\\
		&\quad +\int_{\brd}\frac{\dd V}{\dd \mu}(r,X_r^{t,x,\mu},m^{t,\mu}(r,\cdot))(\xi)\frac{\dd m^{t,\mu}}{\dd r}(r,\xi)d\xi\Big]dr\\
		&\quad +\sum_{i,j=1}^d \int_t^s\frac{\dd V}{\dd x_i}(r,X_r^{t,x,\mu},m^{t,\mu}(r,\cdot))\sigma_{ij}(X_r^{t,x,\mu},m^{t,\mu}(r,\cdot))dB^j_r.
	\end{split}
\end{equation}
Our main result is stated as follows.

\begin{theorem}\label{main}
	Let Assumptions (H1), (H2'), (H3) and (H4) be satisfied. Then, the function $V(t,x,\mu)=\e[\Phi(X_T^{t,x,\mu},m^{t,\mu}(T,\cdot))],\ (t,x,\mu)\in[0,T]\times\brd\times W^{3,2}(\brd)$ is the unique solution in $C^{1,2,1}_b$ of the PDE
	\begin{equation}\label{main_1}
		\left\{
		\begin{aligned}
			&\frac{\dd V}{\dd t}(t,x,\mu)+\sum_{i=1}^d\frac{\dd V}{\dd x_i}(t,x,\mu)b_i(x,\mu)+\sum_{i,j=1}^d\frac{\dd^2V}{\dd x_i\dd x_j}(t,x,\mu)a_{ij}(x,\mu)\\
			&\qquad+\int_{\brd}\frac{\dd V}{\dd \mu}(t,x,\mu)(\xi)\Big\{\sum_{i,j=1}^d\frac{\dd^2}{\dd \xi_i\dd\xi_j}[a_{ij}(\xi,\mu(\cdot))\mu(\xi)]\\
			&\qquad-\sum_{i=1}^d\frac{\dd}{\dd\xi_i}[b_i(\xi,\mu(\cdot))\mu(\xi)]\Big\}d\xi=0,\\
			&\qquad(t,x,\mu)\in[0,T)\times\brd\times W^{3,2}(\brd);\\
			&V(T,x,\mu)=\Phi(x,\mu),\quad (x,\mu)\in\brd\times W^{3,2}(\brd).
		\end{aligned}
		\right.
	\end{equation}
\end{theorem}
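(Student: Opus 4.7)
The plan is to prove the theorem in two stages: first, apply the It\^o formula of Lemma~\ref{forall} to the candidate $V$ and use the martingale structure inherited from the flow properties of \eqref{pde_m}--\eqref{sde} to extract the PDE; second, derive uniqueness by running the same It\^o formula in reverse on an arbitrary $C_b^{1,2,1}$ solution and invoking a Feynman--Kac representation.

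For existence, I would first record that $V\in C_b^{1,2,1}$ under (H1), (H2'), (H3), (H4): Proposition~\ref{thm3.1} supplies smoothness and Lipschitz bounds in $(x,\mu)$ and Proposition~\ref{4.2} supplies differentiability in $t$ with \eqref{4.2_1.5}--\eqref{4.2_2}, which together match Definition~\ref{def:f}. For $\mu\in W^{3,2}(\brd)$, estimate \eqref{cor2_1} of Proposition~\ref{thm1'} ensures $\sup_{s\in[t,T]}\|m^{t,\mu}(s,\cdot)\|_{W^{3,2}(\brd)}<+\infty$, so the trajectory $s\mapsto m^{t,\mu}(s,\cdot)$ stays inside the domain where $V$ is $C_b^{1,2,1}$, and \eqref{remark_1} may be invoked. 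The flow properties $X_r^{t,x,\mu}=X_r^{s,X_s^{t,x,\mu},m^{t,\mu}(s,\cdot)}$ (uniqueness of \eqref{sde}) and $m^{t,\mu}(r,\cdot)=m^{s,m^{t,\mu}(s,\cdot)}(r,\cdot)$ (uniqueness of \eqref{pde_m} from Lemma~\ref{thm1}), together with independence of $\sigma\{B_u-B_s:u\ge s\}$ from $\mathcal{F}_s$, yield
\begin{equation*}
V(s,X_s^{t,x,\mu},m^{t,\mu}(s,\cdot))=\e\bigl[\Phi(X_T^{t,x,\mu},m^{t,\mu}(T,\cdot))\mid\mathcal{F}_s\bigr],
\end{equation*}
so this process is an $\mathcal{F}_s$-martingale on $[t,T]$ with mean $V(t,x,\mu)$. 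Taking expectations in \eqref{remark_1} kills the stochastic integral, and the martingale property forces the remaining $dr$-integrand to vanish in expectation on every subinterval. Substituting \eqref{pde_m} for $\frac{\dd m^{t,\mu}}{\dd r}$ inside the integrand of \eqref{remark_1} rewrites it as the left-hand side of \eqref{main_1} evaluated at $(r,X_r^{t,x,\mu},m^{t,\mu}(r,\cdot))$. Using $V\in C_b^{1,2,1}$, the H\"older estimate \eqref{thm1'''_1}, and the path continuity of $X^{t,x,\mu}$, I would divide by $s-t$, send $s\downarrow t$, and invoke dominated convergence to conclude that this integrand vanishes at $r=t$, which is precisely \eqref{main_1} at $(t,x,\mu)$. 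The terminal condition is immediate from the definition of $V$.

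For uniqueness, let $W\in C_b^{1,2,1}$ satisfy \eqref{main_1} with $W(T,\cdot,\cdot)=\Phi$. Applying Lemma~\ref{forall} to $W$ along $(X^{t,x,\mu},m^{t,\mu})$ on $[t,T]$ and substituting \eqref{pde_m} for $\frac{\dd m^{t,\mu}}{\dd r}$ again, the $dr$-integrand becomes the PDE expression for $W$, which vanishes identically. The stochastic integral is a true martingale because its integrand is bounded, thanks to the boundedness of $\dd_x W$ (from $W\in C_b^{1,2,1}$) and of $\sigma$ (from (H2)). Taking expectations then gives $W(t,x,\mu)=\e[\Phi(X_T^{t,x,\mu},m^{t,\mu}(T,\cdot))]=V(t,x,\mu)$.

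The main obstacle I anticipate is the coupled regularity book-keeping around the two substitutions of \eqref{pde_m} into \eqref{remark_1}: one must track that $m^{t,\mu}(r,\cdot)$ remains in $W^{3,2}(\brd)$ uniformly in $r$ (which is exactly why (H2') and the $W^{3,2}$ measure domain appear in the statement), that $\frac{\dd m^{t,\mu}}{\dd r}$ lies in $L^2(\brd)$ so that the pairing against $\frac{\dd V}{\dd\mu}$ makes sense, and that the resulting integrand is continuous in $r$ uniformly in $\omega$ on bounded sets so the $s\downarrow t$ limit yields a pointwise PDE. All of this is supplied by Proposition~\ref{thm1'} (in particular \eqref{cor2_1} and \eqref{thm1'''_1}) combined with the continuity moduli from Propositions~\ref{thm3.1} and \ref{4.2}; the remaining work is bookkeeping rather than a new estimate.
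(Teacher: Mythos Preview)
Your proposal is correct and follows essentially the same architecture as the paper: establish $V\in C_b^{1,2,1}$ via Propositions~\ref{thm3.1} and \ref{4.2}, use the flow property to see that $s\mapsto V(s,X_s^{t,x,\mu},m^{t,\mu}(s,\cdot))$ is a martingale, compare with the It\^o expansion \eqref{remark_1} to force the drift to vanish, and run the same It\^o formula backwards for uniqueness. The only stylistic difference is that the paper reads off the vanishing of the $dr$-integrand directly from the uniqueness of the semimartingale decomposition (the bounded-variation part of a continuous martingale is identically zero, hence the integrand is zero at $r=t$ where $X_t^{t,x,\mu}=x$), whereas you take expectations and pass to the limit $s\downarrow t$; both routes are valid and require the same regularity inputs.
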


\begin{proof}
	First note from Proposition~\ref{thm1'} that $m^{t,\mu}(s,\cdot)\in W^{3,2}(\brd)$ for $0\le t\le s\le T$ and $\mu\in W^{3,2}(\brd)$. From the flow property
	\begin{align*}
		(X^{s,X_s^{t,x,\mu},m^{t,\mu}(s,\cdot)}_r,m^{s,m^{t,\mu}(s,\cdot)}(r,\cdot))=(X_r^{t,x,\mu},m^{t,\mu}(r,\cdot)),\\
		\qquad0\le t\le s\le r\le T,\quad (x,\mu)\in\brd\times W^{3,2}(\brd),
	\end{align*}
	as well as
	\begin{align*}
		V(s,y,\mu')=\e[\Phi(X_T^{s,y,\mu'},m^{s,\mu'}(T,\cdot))]=\e[\Phi(X_T^{s,y,\mu'},m^{s,\mu'}(T,\cdot))|\f_s],\\
		\qquad (s,y,\mu')\in [0,T]\times\brd\times W^{3,2}(\brd),
	\end{align*}
	we deduce that
	\begin{align*}
		V(s,X_s^{t,x,\mu},m^{t,\mu}(s,\cdot))&=\e[\Phi(X_T^{s,y,\mu'},m^{s,\mu'}(T,\cdot))|\f_s]|_{(y,\mu')=(X_s^{t,x,\mu},m^{t,\mu}(s,\cdot))}\\
		&=\e[\Phi(X^{s,X_s^{t,x,\mu},m^{t,\mu}(s,\cdot)}_T,m^{s,m^{t,\mu}(s,\cdot)}(T,\cdot))|\f_s]\\
		&=\e[\Phi(X_T^{t,x,\mu},m^{t,\mu}(T,\cdot))|\f_s],\quad s\in[t,T].
	\end{align*}
	That is, $V(s,X_s^{t,x,\mu},m^{t,\mu}(s,\cdot)),\ s\in[t,T]$, is a martingale. On the other hand, due to \eqref{remark_1}, we have
	\begin{equation*}
		\begin{split}		
			&V(s,X_s^{t,x,\mu},m^{t,\mu}(s,\cdot))-V(t,x,\mu)\\
			&=\sum_{i,j=1}^d\int_t^s\frac{\dd V}{\dd x_i}(r,X_r^{t,x,\mu},m^{t,\mu}(r,\cdot))\sigma_{ij}(X_r^{t,x,\mu},m^{t,\mu}(r,\cdot))dB_r^j,\quad s\in[t,T]
		\end{split}
	\end{equation*}
	and
	\begin{equation*}
		\begin{split}		
			&0=\int_t^s \Big[\frac{\dd V}{\dd r}(r,X_r^{t,x,\mu},m^{t,\mu}(r,\cdot))+\sum_{i=1}^d \frac{\dd V}{\dd x_i}(r,X_r^{t,x,\mu},m^{t,\mu}(r,\cdot))b_i(X_r^{t,x,\mu},m^{t,\mu}(r,\cdot))\\
			&\qquad+ \sum_{i,j=1}^d\frac{\dd^2 V}{\dd x_i\dd x_j}(r,X_r^{t,x,\mu},m^{t,\mu}(r,\cdot))a_{ij}(X_r^{t,x,\mu},m^{t,\mu}(r,\cdot))\\
			&\qquad +\int_{\brd}\frac{\dd V}{\dd \mu}(r,X_r^{t,x,\mu},m^{t,\mu}(r,\cdot))(\xi)\frac{\dd m^{t,\mu}}{\dd r}(r,\xi)d\xi\Big]dr,\quad s\in[t,T],
		\end{split}
	\end{equation*}
	from which we obtain the desired PDE.
	
	It only remains to prove the uniqueness of the solution of the PDE in the class $C_b^{1,2,1}$. Let $V_*\in C^{1,2,1}_b$ be a solution of PDE \eqref{main_1}. Then, from the It\^o's formula in Lemma~\ref{forall}, we know that the process
	\begin{equation*}
		\begin{split}		
			&V_*(s,X_s^{t,x,\mu},m^{t,\mu}(s,\cdot))-V_*(t,x,\mu)\\
			&=\sum_{i,j=1}^d\int_t^s\frac{\dd V_*}{\dd x_i}(r,X_r^{t,x,\mu},m^{t,\mu}(r,\cdot))\sigma_{ij}(X_r^{t,x,\mu},m^{t,\mu}(r,\cdot))dB^j_r,\quad s\in[t,T],
		\end{split}
	\end{equation*}
	is a martingale. Thus, for all $(t,x,\mu)\in[0,T]\times\brd\times W^{3,2}(\brd)$,
	\begin{align*}
		V_*(t,x,\mu)&=\e[V_*(T,X_T^{t,x,\mu},m^{t,\mu}(T,\cdot))|\f_t]\\
		&=\e[\Phi(X_T^{t,x,\mu},m^{t,\mu}(T,\cdot))]=V(t,x,\mu).
	\end{align*}
	This proves that $V_*$ and $V$ coincide, that is, the solution is unique in $C_b^{1,2,1}$. The proof is complete.
\end{proof}

\begin{corollary}\label{cor}
	Let Assumptions (H1), (H2'), (H3) and (H4) be satisfied. Then, the function $V$ satisfies the first equality of  PDE \eqref{pde_v} at any triplet $(t,x,\mu)\in[0,T]\times\brd\times W^{2,2}(\brd)$.
\end{corollary}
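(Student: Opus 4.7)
The plan is to extend Theorem~\ref{main}---which identifies $V$ as a solution of PDE \eqref{main_1} for $\mu\in W^{3,2}(\brd)$---to the weaker class $\mu\in W^{2,2}(\brd)$ via a density argument. For a given $\mu\in W^{2,2}(\brd)$, I would choose an approximating sequence $\{\mu_n\}_{n\geq 1}\subset W^{3,2}(\brd)$ (e.g.\ via standard mollification) such that $\mu_n\to\mu$ in $W^{2,2}(\brd)$ with $\sup_n\|\mu_n\|_{W^{2,2}(\brd)}<+\infty$. Applying Theorem~\ref{main} at each $(t,x,\mu_n)$ yields the PDE identity \eqref{main_1}, so the task reduces to showing that every term on the left-hand side is continuous in $\mu$ with respect to the $W^{2,2}$-topology and then passing to the limit $n\to\infty$.

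The ``local'' terms $\frac{\dd V}{\dd t}(t,x,\mu)$, $\sum_i\frac{\dd V}{\dd x_i}(t,x,\mu)b_i(x,\mu)$ and $\sum_{i,j}\frac{\dd^2 V}{\dd x_i\dd x_j}(t,x,\mu)a_{ij}(x,\mu)$ are already continuous in $\mu\in W^{1,2}(\brd)$ by Propositions~\ref{thm3.1} and~\ref{4.2} together with the Lipschitz continuity of $b_i,a_{ij}$ in $m$ from Assumption (H2), hence \emph{a fortiori} continuous under the stronger $W^{2,2}$-convergence $\mu_n\to\mu$. The delicate contribution is the nonlocal integral
\begin{equation*}
	I(t,x,\mu):=\int_{\brd}\frac{\dd V}{\dd \mu}(t,x,\mu)(\xi)\mathcal{A}[\mu](\xi)d\xi,
\end{equation*}
where
\begin{equation*}
	\mathcal{A}[\mu](\xi):=\sum_{i,j=1}^d\frac{\dd^2}{\dd \xi_i\dd \xi_j}[a_{ij}(\xi,\mu)\mu(\xi)]-\sum_{i=1}^d\frac{\dd}{\dd \xi_i}[b_i(\xi,\mu)\mu(\xi)].
\end{equation*}
A Leibniz expansion writes $\mathcal{A}[\mu]$ as a linear combination of $\mu,\nabla\mu,\nabla^2\mu$ with coefficients that are $x$-partial derivatives (up to order two) of $a_{ij},b_i$ in their first argument, all uniformly bounded by Assumption (H2'). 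Consequently $\|\mathcal{A}[\mu]\|_{L^2(\brd)}\leq C(L)\|\mu\|_{W^{2,2}(\brd)}$, and together with the $L^2(\brd)$-continuity of $\mu\mapsto \frac{\dd V}{\dd\mu}(t,x,\mu)(\cdot)$ from Proposition~\ref{thm3.1}, the Cauchy--Schwarz inequality reduces the convergence $I(t,x,\mu_n)\to I(t,x,\mu)$ to the $L^2$-continuity of $\mu\mapsto\mathcal{A}[\mu]$.

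The main obstacle I anticipate is precisely this $L^2$-continuity, since each summand pairs the second derivatives $\mu_{n,\xi_i\xi_j}$ with coefficients such as $a_{ij}(\xi,\mu_n)$ whose dependence on $\mu_n$ must be controlled simultaneously. The remedy is a standard triangle-inequality split
\begin{equation*}
	a_{ij}(\xi,\mu_n)\mu_{n,\xi_i\xi_j}-a_{ij}(\xi,\mu)\mu_{\xi_i\xi_j}=[a_{ij}(\xi,\mu_n)-a_{ij}(\xi,\mu)]\mu_{n,\xi_i\xi_j}+a_{ij}(\xi,\mu)[\mu_{n,\xi_i\xi_j}-\mu_{\xi_i\xi_j}],
\end{equation*}
where the first summand is bounded in $L^2(\brd)$ by $L\|\mu_n-\mu\|_{L^2(\brd)}\sup_n\|\mu_n\|_{W^{2,2}(\brd)}$ using the Lipschitz continuity of $a_{ij}$ in Assumption (H2) and the uniform $W^{2,2}$-bound on $\mu_n$, and the second by $L\|\mu_n-\mu\|_{W^{2,2}(\brd)}$ since $|a_{ij}|\leq L$. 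Analogous splits dispose of the $\nabla\mu$, $\mu$ and $b_i$ terms appearing in $\mathcal{A}[\mu_n]-\mathcal{A}[\mu]$, using that the derivatives $\dd_{x_k}a_{ij},\dd_{x_k}\dd_{x_l}a_{ij},\dd_{x_k}b_i$ are all bounded and Lipschitz in $m$ by (H2) and (H2'). Once $I(t,x,\mu_n)\to I(t,x,\mu)$ is established, passing to the limit in \eqref{main_1} at $(t,x,\mu_n)$ yields the first equality of \eqref{pde_v} at $(t,x,\mu)\in[0,T]\times\brd\times W^{2,2}(\brd)$, completing the proof.
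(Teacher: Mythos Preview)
Your density argument from $W^{3,2}(\brd)$ to $W^{2,2}(\brd)$ is exactly the route the paper takes, and your treatment of the nonlocal term $I(t,x,\mu)$ via Cauchy--Schwarz and the Leibniz expansion of $\mathcal{A}[\mu]$ is correct and in fact more explicit than the paper's version.

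There is, however, a gap in your handling of the time derivative. You assert that $\frac{\dd V}{\dd t}(t,x,\mu)$ is ``already continuous in $\mu\in W^{1,2}(\brd)$ by Propositions~\ref{thm3.1} and~\ref{4.2}''. But Proposition~\ref{4.2} only gives the continuity estimate \eqref{4.2_2} for $\mu,\mu'\in W^{3,2}(\brd)$, with a constant depending on $\|\mu\|_{W^{3,2}(\brd)}$ and $\|\mu'\|_{W^{3,2}(\brd)}$. Since your approximants $\mu_n$ are only uniformly bounded in $W^{2,2}(\brd)$, you cannot invoke \eqref{4.2_2} to conclude that $\frac{\dd V}{\dd t}(t,x,\mu_n)$ is Cauchy. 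The paper circumvents this: it first shows (as you do) that the spatial and nonlocal terms converge \emph{uniformly in $(t,x)$}, then uses the PDE identity \eqref{main_1} at each $\mu_n$ to deduce that $\frac{\dd V}{\dd t}(t,x,\mu_n)$ converges uniformly in $t$, and finally identifies the limit with $\frac{\dd V}{\dd t}(t,x,\mu)$ via a limit-interchange argument (using the continuity of $V$ itself in $\mu$ and the existence of $\frac{\dd V}{\dd t}(t,x,\mu)$ for $\mu\in W^{1,2}(\brd)$ from the first part of Proposition~\ref{4.2}).

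Your approach can be repaired without the paper's detour: since formula \eqref{4.2_1} is valid for $\mu\in W^{1,2}(\brd)$, you may argue directly from it using estimate \eqref{thm1'_2} of Proposition~\ref{thm1'} (continuity of $m_s^{t,\mu}(T,\cdot)$ in $L^2(\brd)$ with constants depending only on $W^{2,2}$-norms) together with \eqref{deltaX} and Proposition~\ref{lemma6}. Either fix closes the gap; just be aware that Proposition~\ref{4.2} as stated does not do the work you attribute to it.
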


\begin{proof}
	For $\mu\in W^{2,2}(\brd)$, we can find $\{\mu_n,\ n\geq 1\}\subset W^{3,2}(\brd)$ such that $\lim_{n\to +\infty}\mu_n=\mu$ in $W^{2,2}(\brd)$. From Proposition~\ref{thm3.1} and Assumption (H2), we know that
	\begin{align*}
		&\lim_{n\to+\infty}\Big\{\sum_{i=1}^d\frac{\dd V}{\dd x_i}(t,x,\mu_n)b_i(x,\mu_n)+\sum_{i,j=1}^d\frac{\dd^2V}{\dd x_i\dd x_j}(t,x,\mu_n)a_{ij}(x,\mu_n)\\
		&\qquad+\int_{\brd}\frac{\dd V}{\dd \mu}(t,x,\mu_n)(\xi)\Big[\sum_{i,j=1}^d\frac{\dd^2}{\dd \xi_i\dd\xi_j}[a_{ij}(\xi,\mu_n(\cdot))\mu_n(\xi)]\\
		&\qquad-\sum_{i=1}^d\frac{\dd}{\dd\xi_i}[b_i(\xi,\mu_n(\cdot))\mu_n(\xi)]\Big]d\xi\Big\}\\
		&=\sum_{i=1}^d\frac{\dd V}{\dd x_i}(t,x,\mu)b_i(x,\mu)+\sum_{i,j=1}^d\frac{\dd^2V}{\dd x_i\dd x_j}(t,x,\mu)a_{ij}(x,\mu)\\
		&\qquad+\int_{\brd}\frac{\dd V}{\dd \mu}(t,x,\mu)(\xi)\Big\{\sum_{i,j=1}^d\frac{\dd^2}{\dd \xi_i\dd\xi_j}[a_{ij}(\xi,\mu(\cdot))\mu(\xi)]-\sum_{i=1}^d\frac{\dd}{\dd\xi_i}[b_i(\xi,\mu(\cdot))\mu(\xi)]\Big\}d\xi,
	\end{align*}
	uniformly in $(t,x)\in [0,T)\times\brd$. From PDE \eqref{main_1} and the fact that $\{\mu_n,\ n\geq1\}\subset W^{3,2}(\brd)$, we know that the sequence $\frac{\dd V}{\dd t}(t,x,\mu_n)$ converge as $n\to+\infty$ uniformly in $(t,x)\in [0,T)\times\brd$. Therefore, we have for $(t,x)\in[0,T)\times\brd$,
	\begin{align*}
		&\frac{\dd V}{\dd t}(t,x,\mu)-\lim_{n\to+\infty}\frac{\dd V}{\dd t}(t,x,\mu_n)\\
		&=\lim_{h\to0}\frac{1}{h}[V(t+h,x,\mu)-V(t,x,\mu)]-\lim_{n\to+\infty}\lim_{h\to0}\frac{1}{h}[V(t+h,x,\mu_n)-V(t,x,\mu_n)]\\
		&=\lim_{h\to0}\lim_{n\to+\infty}\frac{1}{h}[({V(t+h,x,\mu)-V(t+h,x,\mu_n)})-(V(t,x,\mu)-V(t,x,\mu_n))]=0.
	\end{align*}
	That is, $V$ is a solution of PDE \eqref{pde_v} defined on $(t,x,\mu)\in[0,T]\times\brd\times W^{2,2}(\brd)$. The uniqueness result is a direct consequence of the continuity of $V$ and the uniqueness result in Theorem~\ref{main}.
\end{proof}

\section{The case of state-invariant diffusion}\label{sec:exp}
Consider the particular case that the coefficients $b$ and $\sigma$ do not depend  on the state $x$.
\begin{lemma}\label{exp1}
	Let Assumptions (H1) and (H2) be satisfied. If coefficients $(b,\sigma)$ are independent of $x$, then (H3) holds true.
\end{lemma}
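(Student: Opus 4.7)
The plan is to exploit the spatial homogeneity of equation \eqref{f} that arises when $(b,\sigma)$ is independent of $x$: the coefficients $a_{ij}(m^{t,\mu}(s,\cdot))$ and $b_i(m^{t,\mu}(s,\cdot))$ then depend only on $s$, so the fundamental solution $f^{t,\mu}(s,x,y)$ is translation-invariant, $f^{t,\mu}(s,x,y)=F^{t,\mu}(s,x-y)$. Its existence and smoothness for $s>t$ are immediate from the explicit Gaussian formula, since (H1) guarantees that the integrated covariance $\int_t^s(\sigma\sigma^*)(m^{t,\mu}(r,\cdot))\,dr$ is positive definite. Using the identity $\dd_{x_i}f^{t,\mu}=-\dd_{y_i}f^{t,\mu}$ inside the integral defining $\phi^{t,\mu}$, I would show that $\phi^{t,\mu}$ satisfies the spatially homogeneous parabolic equation
\begin{equation*}
  \frac{\dd\phi^{t,\mu}}{\dd s}(s,y)=\sum_{i,j=1}^d a_{ij}(m^{t,\mu}(s,\cdot))\,\frac{\dd^2\phi^{t,\mu}}{\dd y_i\dd y_j}(s,y)+\sum_{i=1}^d b_i(m^{t,\mu}(s,\cdot))\,\frac{\dd\phi^{t,\mu}}{\dd y_i}(s,y),
\end{equation*}
with initial datum $\phi^{t,\mu}(t,y)=\varphi(y)$. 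This reduces everything to classical linear parabolic theory.

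For \eqref{lem8_1}, multiply by $\phi^{t,\mu}$ and integrate in $y$; the drift term vanishes because $b_i$ is $y$-independent, so $\sum_i b_i\int\dd_{y_i}\phi\cdot\phi\,dy=\tfrac12\sum_i b_i\int\dd_{y_i}(\phi^2)\,dy=0$, while the diffusion term is bounded above by $-\gamma\|\nabla\phi^{t,\mu}\|_{L^2(\brd)}^2$ via (H1). This yields $\|\phi^{t,\mu}(s,\cdot)\|_{L^2(\brd)}^2+2\gamma\int_t^s\|\nabla\phi^{t,\mu}(r,\cdot)\|_{L^2(\brd)}^2\,dr\le\|\varphi\|_{L^2(\brd)}^2$. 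For \eqref{lem8_1'}, the $y$-independence of the coefficients means $\dd_{y_k}\phi^{t,\mu}$ solves the same equation with initial datum $\dd_k\varphi\in L^2(\brd)$; iterating the previous bound produces $\|\phi^{t,\mu}\|_{L^2([t,T];W^{2,2}(\brd))}\le C(\gamma)\|\varphi\|_{W^{1,2}(\brd)}$, whence $\|\phi^{t,\mu}_s\|_{L^2([t,T];L^2(\brd))}$ follows directly from the equation using $|a_{ij}|,|b_i|\le L$.

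For the Lipschitz estimate \eqref{lem10_1}, set $\psi:=\phi^{t,\mu'}-\phi^{t,\mu}$; it solves the same equation with coefficients evaluated at $m^{t,\mu}$, zero initial datum, and inhomogeneity $\sum_{i,j}[a_{ij}(m^{t,\mu'}(s,\cdot))-a_{ij}(m^{t,\mu}(s,\cdot))]\dd^2_{y_iy_j}\phi^{t,\mu'}+\sum_i[b_i(m^{t,\mu'}(s,\cdot))-b_i(m^{t,\mu}(s,\cdot))]\dd_{y_i}\phi^{t,\mu'}$. An energy estimate on $\psi$ — integrating by parts once on the second-order inhomogeneous term to produce $\nabla\phi^{t,\mu'}\cdot\nabla\psi$ and then using Young's inequality to absorb part of $\|\nabla\psi\|_{L^2(\brd)}^2$ into the ellipticity — combined with the Lipschitz bound $|a_{ij}(m')-a_{ij}(m)|+|b_i(m')-b_i(m)|\le L\|m'-m\|_{L^2(\brd)}$ from (H2), the estimate $\sup_s\|m^{t,\mu'}(s,\cdot)-m^{t,\mu}(s,\cdot)\|_{L^2(\brd)}\le C\|\mu'-\mu\|_{L^2(\brd)}$ from Proposition~\ref{lemma6}, and Gr\"onwall's inequality, delivers $\sup_s\|\psi(s,\cdot)\|_{L^2(\brd)}\le C(\gamma,L,T,\|\mu\|_{W^{1,2}(\brd)})\|\varphi\|_{L^2(\brd)}\|\mu'-\mu\|_{L^2(\brd)}$, which is at least as strong as \eqref{lem10_1}.

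The main obstacle is purely technical: one must justify the integration by parts transferring $x$-derivatives of $f^{t,\mu}$ onto $y$-derivatives of $\phi^{t,\mu}$, which is handled either through the explicit Gaussian form of $f^{t,\mu}$ for $s>t$ or by approximating $\varphi\in L^2(\brd)$ by smooth compactly supported test functions. Once that is settled, Assumption (H3) follows from entirely standard parabolic energy estimates, and in fact the bounds obtained are slightly stronger than what is strictly required.
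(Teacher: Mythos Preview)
Your proposal is correct and follows essentially the same route as the paper: both exploit the translation invariance $f^{t,\mu}(s,x,y)=h^{t,\mu}(s,x-y)$ to show that $\phi^{t,\mu}$ satisfies the spatially homogeneous parabolic equation in $y$ you wrote down, and then read off \eqref{lem8_1}--\eqref{lem10_1} from standard energy estimates (the paper simply cites the proof of Lemma~\ref{thm1} and Proposition~\ref{lemma6} rather than spelling them out). Your observation that in this $x$-independent case the Lipschitz bound \eqref{lem10_1} actually holds with $\|\varphi\|_{L^2(\brd)}$ in place of $\|\varphi\|_{W^{1,2}(\brd)}$ is a valid sharpening that the paper does not record.
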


\begin{proof}
	For $(t,\mu)\in[0,T]\times W^{1,2}(\brd)$, by standard arguments of the fundamental solution of parabolic equations, there is a weak solution $h^{t,\mu}$ of the following equation
	\begin{equation}\label{h}
		\left\{
		\begin{aligned}
			&\frac{\dd h^{t,\mu}}{\dd s}(s,z)-\sum_{i,j=1}^da_{ij}(m^{t,\mu}(s,\cdot))\frac{\dd^2h^{t,\mu}}{\dd z_i\dd z_j}(s,z)\\
			&\qquad+\sum_{i=1}^db_i(m^{t,\mu}(s,\cdot))\frac{\dd h^{t,\mu}}{\dd z_i}(s,z)=0,\quad(s,z)\in(t,T]\times\brd,\\
			&h^{t,\mu}(t,z)=\delta(z),\quad z\in\brd.
		\end{aligned}
		\right.
	\end{equation}
    Then, $f^{t,\mu}(s,x,y):=h^{t,\mu}(s,x-y)$ for $(s,x,y)\in [t,T]\times\brd\times\brd$ is a weak solution of equation \eqref{f}. For any $\varphi\in L^{2}(\brd)$, the function $\phi^{t,\mu}$ defined as \eqref{phi} satisfies for $(s,y)\in(t,T]\times\brd$,
    \begin{align*}
    	&\frac{\dd \phi^{t,\mu}}{\dd s}(s,y)=\int_{\brd}\frac{\dd f^{t,\mu}}{\dd s}(s,x,y)\varphi(x)dx=\int_{\brd}\frac{\dd h^{t,\mu}}{\dd s}(s,x-y)\varphi(x)dx\notag\\
    	&=\int_{\brd} \sum_{i,j=1}^da_{ij}(m^{t,\mu}(s,\cdot)) \frac{\dd^2 }{\dd x_i\dd x_j}[h^{t,\mu}(s,x-y)]\varphi(x)\notag\\
    	&\qquad-\sum_{i=1}^db_i(m^{t,\mu}(s,\cdot))\frac{\dd }{\dd x_i}[h^{t,\mu}(s,x-y)]\varphi(x)dx\notag\\
    	&=\int_{\brd} \sum_{i,j=1}^da_{ij}(m^{t,\mu}(s,\cdot))\frac{\dd^2}{\dd y_i\dd y_j}[h^{t,\mu}(s,x-y)]\varphi(x)\notag\\
    	&\qquad+\sum_{i=1}^db_i(m^{t,\mu}(s,\cdot))\frac{\dd}{\dd y_i}[h^{t,\mu}(s,x-y)]\varphi(x)dx\notag\\
    	&=\int_{\brd} \sum_{i,j=1}^da_{ij}(m^{t,\mu}(s,\cdot))\frac{\dd^2}{\dd y_i\dd y_j}[f^{t,\mu}(s,x,y)]\varphi(x)\notag\\
    	&\qquad+\sum_{i=1}^db_i(m^{t,\mu}(s,\cdot))\frac{\dd}{\dd y_i}[f^{t,\mu}(s,x,y)]\varphi(x)dx\notag\\
    	&=\sum_{i,j=1}^da_{ij}(m^{t,\mu}(s,\cdot))\frac{\dd^2 \phi^{t,\mu}}{\dd y_i\dd y_j}(s,y)+\sum_{i=1}^db_i(m^{t,\mu}(s,\cdot))\frac{\dd \phi^{t,\mu}}{\dd y_i}(s,y).\notag
    \end{align*}
    And from the definition of the function $\delta$, we have
    \begin{align*}
    	\phi^{t,\mu}(t,y)=\int_{\brd} \delta(x-y)\varphi(x)dx=\varphi(y),\quad y\in\brd.\label{rm1_0.2}
    \end{align*}
    Therefore, $\phi^{t,\mu}$ satisfies a second-order parabolic PDE. Similar to the proof of Lemma~\ref{thm1}, we have \eqref{lem8_1} and \eqref{lem8_1'}. Then, \eqref{lem10_1} is a consequence of \eqref{lem8_1} and Proposition~\ref{lemma6}.
\end{proof}

Here are the assumptions on coefficients $(b,\sigma)$ for our special cases. For notational convenience, we use the same constant $L>0$ for all the conditions below.

\textbf{(P1)} The function $\sigma$ is independent of $x$. And there exists $\gamma>0$, such that
\begin{align*}
	\sum_{i,j=1}^da_{ij}(m)\xi_i\xi_j\geq\gamma|\xi|^2,\quad \forall\xi\in\brd,\quad m\in L^2(\brd).
\end{align*}

\textbf{(P2)} The function $b$ is independent of $x$. The functionals $f_{ij}:=b_i,\sigma_{ij}$ have derivatives $\frac{\dd f_{ij}}{\dd m}(m)(\cdot)\in W^{1,2}(\brd)$ for all $m\in L^2(\brd)$ and $1\le i,j\le d$. The functionals and derivatives are bounded by $L$. The functionals $f_{ij}$ and the derivatives $\frac{\dd f_{ij}}{\dd m}(\cdot): L^2(\brd)\to L^2(\brd)$ are $L$-Lipschitz continuous. That is, for any $m,m'\in  L^2(\brd)$ and $1\le i,j\le d$,
\begin{align*}
	&|f_{ij}(m)|+\|\frac{\dd f_{ij}}{\dd m}(m)(\cdot)\|_{W^{1,2}(\brd)}\le L;\\
	&|f_{ij}(m')-f_{ij}(m)|+\|\frac{\dd f_{ij}}{\dd m}(m')(\cdot)-\frac{\dd f_{ij}}{\dd m}(m)(\cdot)\|_{L^2(\brd)}\le L\|m'-m\|_{L^2(\brd)}.
\end{align*}

Assumptions (P1) and (P2) imply (H1) and (H2') immediately, and (H3) (see  Lemma~\ref{exp1}). In view of Theorem~\ref{main}, if coefficients $(b,\sigma)$ satisfy Assumptions (P1) and (P2) and function $\Phi$ satisfies Assumption (H4), then, the value function $V(t,x,\mu)=\e[\Phi(X_T^{t,x,\mu},m^{t,\mu}(T,\cdot))]$, $(t,x,\mu)\in[0,T]\times\brd\times W^{3,2}(\brd)$ is the unique solution in $C^{1,2,1}_b$ of the PDE
\begin{equation*}
	\left\{
	\begin{aligned}
		&\frac{\dd V}{\dd t}(t,x,\mu)+\sum_{i=1}^d\frac{\dd V}{\dd x_i}(t,x,\mu)b_i(\mu)+\sum_{i,j=1}^d\frac{\dd^2V}{\dd x_i\dd x_j}(t,x,\mu)a_{ij}(\mu)\\
		&+\int_{\brd}\frac{\dd V}{\dd \mu}(t,x,\mu)(\xi)\Big\{\sum_{i,j=1}^da_{ij}(\mu(\cdot))\frac{\dd^2\mu}{\dd \xi_i\dd\xi_j}(\xi)-\sum_{i=1}^d b_i(\mu(\cdot))\frac{\dd \mu}{\dd\xi_i}(\xi)\Big\}d\xi=0,\\
		&\quad\qquad\qquad\qquad\qquad\qquad\qquad(t,x,\mu)\in[0,T)\times\brd\times W^{3,2}(\brd);\\
		&V(T,x,\mu)=\Phi(x,\mu),\quad (x,\mu)\in\brd\times W^{3,2}(\brd).
	\end{aligned}
	\right.
\end{equation*}

\begin{remark}
	Bensoussan et al. \cite{AB2,AB3} solve the mean-field games master equation for the linear quadratic problems under the condition that the volatility $\sigma$ is a constant. Our work cannot include their results because of our boundedness assumption. However, our work also go beyond their framework. We do not need the linear quadratic condition and allow the volatility $\sigma$ to depend on the density of the state. Our boundedness assumption is expected to be relaxed with appropriate approximations.
\end{remark}

\section{Discussion on the $L^1$ case}\label{l1}
Our choice of  the Hilbert space $L^2(\brd)$ (see also Bensoussan et al. \cite{AB0,AB2}) for the initial data $\mu$ of the FP equation allows  us to discuss always in Hilbert spaces both the flow $\mu\mapsto m^{t,\mu}(T,\cdot)$ and the derivative of $V(t,x,\mu)$ with respect to $\mu$, which greatly simplify our analysis. However, we could only show that $V$ satisfies the first equality of PDE \eqref{pde_v} for the density variable $\mu\in W^{2,2}(\brd)$ rather than the larger space $L^1(\brd)$. It would be interesting to show that the first equality of PDE \eqref{pde_v} still holds for $\mu\in L^1(\brd)$ by some limit procedure in the equation.

\subsection{Approximation method}
We first give the existence and uniqueness results of FP equation \eqref{pde_m} with initial $\mu\in L^1(\br^d)$ by an approximation method. We let coefficients $(b,\sigma)$ be defined on $\brd\times (W^{2,\infty}(\brd))'$, where $(W^{2,\infty}(\brd))'$ is the dual of the Sobolev space $W^{2,\infty}(\brd)$. For $n\geq 0$, we use the notation
\begin{align*}
	\|\cdot\|_{n,\infty}:=\|\cdot\|_{W^{n,\infty}(\brd)},\qquad \|\cdot\|_{(n,\infty)'}:=\|\cdot\|_{(W^{n,\infty}(\brd))'}.
\end{align*}
Actually, $L^1(\brd)\subset (W^{2,\infty}(\brd))'$ and $\|\cdot\|_{(2,\infty)'}\le \|\cdot\|_{L^1(\brd)}$. Besides Assumptions (H1) and (H2), we assume that the coefficients $(b,\sigma)$ are uniformly bounded and $L$-Lipschitz continuous in $(x,m)\in \brd\times (W^{2,\infty}(\brd))'$. That is, for $(x,m), (x',m') \in \brd\times (W^{2,\infty}(\brd))'$
\begin{equation}\label{L1:A1}
	\begin{split}
		&|(b,\sigma)(x,m)|\le L,\\
		&|(b,\sigma)(x',m')-(b,\sigma)(x,m)|\le L(|x'-x|+\|m'-m\|_{(2,\infty)'}).
	\end{split}
\end{equation}
We give an example.
\begin{example}
	Let $h\in W^{2,\infty}(\brd)$. Then, the function $f$ defined as
	\begin{align*}
		f(m):=\sin[m(h)], \quad m\in (W^{2,\infty}(\brd))',
	\end{align*}
	is uniformly bounded and Lipschitz continuous in $(W^{2,\infty}(\brd))'$.
\end{example}

\begin{lemma}\label{L1:lem1}
	Let Assumptions (H1), (H2) and \eqref{L1:A1} be satisfied. Let $\mu,\mu'\in L^{1}(\brd)$ be probability densities, and assume that $m$ and $m'$ are solutions of FP equation \eqref{pde_m} with initials $\mu$ and $\mu'$, respectively, such that for $s\in[t,T]$, $m(s,\cdot)$ and $m'(s,\cdot)$ are probability densities. Then,
	\begin{align}\label{L1:lem1_0}
		\sup_{t\le s\le T}\|(m'-m)(s,\cdot)\|_{(2,\infty)'}\le C(\gamma,L,T)\|\mu'-\mu\|_{({2,\infty})'}.
	\end{align}
\end{lemma}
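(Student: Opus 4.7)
The plan is to proceed by duality and test $m'-m$ against $\varphi\in W^{2,\infty}(\brd)$ via a backward Kolmogorov equation frozen along $m$. Fix $s\in[t,T]$ and $\varphi\in W^{2,\infty}(\brd)$ with $\|\varphi\|_{2,\infty}\le 1$, and set $u(r,x):=\e[\varphi(\widetilde X^{r,x}_s)]$ for $r\in[t,s]$, where $\widetilde X^{r,x}$ solves the SDE with drift $b(\cdot,m(\tau,\cdot))$ and diffusion $\sigma(\cdot,m(\tau,\cdot))$ starting from $x$ at time $r$. Then $u$ is the classical solution of the terminal-value problem $\dd_r u+L_{m(r,\cdot)}u=0$, $u(s,\cdot)=\varphi$, where $L_mv:=\sum_{i,j}a_{ij}(x,m)\dd^2_{x_ix_j}v+\sum_i b_i(x,m)\dd_{x_i}v$.

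The first technical step is the a priori bound
\begin{equation*}
\sup_{r\in[t,s]}\|u(r,\cdot)\|_{2,\infty}\le C(\gamma,L,T)\|\varphi\|_{2,\infty}.
\end{equation*}
This follows from the probabilistic representation: under (H2) the first $x$-derivatives of $b$ and the second $x$-derivatives of $\sigma$ are bounded and Lipschitz, so $x\mapsto\widetilde X^{r,x}_s$ admits first and second derivatives with $L^p$-moments controlled uniformly in $(r,x)$, and the chain rule transfers the $W^{2,\infty}$-norm from $\varphi$ to $u$.

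The second step combines the backward equation for $u$ (coefficients along $m$) with the forward FP equation for $m'$ (coefficients along $m'$). After integrations by parts, which are justified by approximation since $m'(r,\cdot)\in L^1(\brd)$ and $u(r,\cdot)\in W^{2,\infty}(\brd)$, the two second-order and first-order $L_{m(r,\cdot)}$-terms combine to leave only the coefficient-mismatch:
\begin{equation*}
\frac{d}{dr}\int_{\brd} u(r,x)m'(r,x)\,dx=\int_{\brd}\bigl(L_{m'(r,\cdot)}-L_{m(r,\cdot)}\bigr)u(r,x)\cdot m'(r,x)\,dx.
\end{equation*}
The Lipschitz hypothesis \eqref{L1:A1} applied to $b$ and $a=\tfrac12\sigma\sigma^*$ (which is Lipschitz in $m$ since $\sigma$ is bounded and Lipschitz) gives the pointwise estimate
\begin{equation*}
\bigl|(L_{m'(r,\cdot)}-L_{m(r,\cdot)})u(r,x)\bigr|\le C(L)\|u(r,\cdot)\|_{2,\infty}\|m'(r,\cdot)-m(r,\cdot)\|_{(2,\infty)'}.
\end{equation*}
Integrating in $r$ over $[t,s]$, using $u(s,\cdot)=\varphi$, $m'(t,\cdot)=\mu'$, $\int m'(r,\cdot)dx=1$, together with the analogous duality along $m$ alone, namely $\int\varphi\,m(s,\cdot)dx=\int u(t,\cdot)\mu\,dx$, I obtain
\begin{equation*}
\Bigl|\int_{\brd}\varphi(x)[m'(s,x)-m(s,x)]\,dx\Bigr|\le C\|\mu'-\mu\|_{(2,\infty)'}+C\int_t^s\|m'(r,\cdot)-m(r,\cdot)\|_{(2,\infty)'}dr.
\end{equation*}
Taking the supremum over $\|\varphi\|_{2,\infty}\le 1$ and invoking Grönwall's inequality yields \eqref{L1:lem1_0}.

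The main obstacle is the $W^{2,\infty}$ a priori bound on $u$, since preservation of $C^{1,1}$ regularity under the parabolic flow is delicate at this level of coefficient smoothness; the rigorous justification of the integration by parts above requires similar care because $m,m'$ are only $L^1$ in $x$. I would handle both by first mollifying $(b,\sigma,\mu,\mu',m,m')$ in the $x$-variable, proving the estimate in the smoothed setting where the probabilistic representation is straightforwardly twice-differentiable in $x$ and classical integration by parts applies, and then passing to the limit, using that the intermediate constants depend only on $(\gamma,L,T)$ and not on the mollification parameter.
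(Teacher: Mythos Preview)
Your argument is correct and matches the paper's strategy: test $m'-m$ against $\xi\in W^{2,\infty}$ via the backward Kolmogorov equation with frozen coefficients, use the $W^{2,\infty}$ a~priori bound on the dual solution together with the Lipschitz assumption \eqref{L1:A1} and $\int m=\int m'=1$, and close with Gr\"onwall. The only cosmetic differences are that the paper freezes the coefficients along $m'$ rather than $m$ and invokes \cite[Proposition~2.2]{TSE} for the bound $\sup_r\|w(r,\cdot)\|_{2,\infty}\le C\|\xi\|_{2,\infty}$, whereas you obtain it from the probabilistic representation and flow derivatives (which is legitimate under (H2) since $b_x$ and $\sigma_{xx}$ are bounded and Lipschitz).
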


\begin{proof}
	Without loss of generality, we restrict ourselves within the one-dimensional case $d=1$. We set $\Delta m:=m'-m$ and $\Delta\mu:=\mu'-\mu$. For $s\in(t,T]$ and $\xi\in W^{2,\infty}(\brd)$, we let $w$ be the solution to the following PDE
	\begin{equation}\label{L1:lem1_1}
		\left\{
		\begin{aligned}
			&\frac{\dd w}{\dd r}(r,x)-a(x,m'(r,\cdot))\frac{\dd^2 w}{\dd x^2}(r,x)\\
			&\qquad+b(x,m'(r,\cdot))\frac{\dd w}{\dd x}(s,x)=0,\qquad (r,x)\in(t,s]\times\brd,\\
			&w(t,x)=\xi(x),\quad x\in\brd.
		\end{aligned}
		\right.
	\end{equation}
	Since $m'$ is already known, PDE \eqref{L1:lem1_1} is a linear parabolic equation and is well studied in the literature. See \cite{PC1,LO} for the existence and uniqueness results and \cite{F.D,TSE} for some estimates of the solution. Similar to \cite[Proposition 2.2]{TSE}, we have
	\begin{equation}\label{L1:lem1_2}
		\sup_{t\le r\le s}\|w(r,\cdot)\|_{2,\infty}\le C(\gamma,L)\|\xi\|_{2,\infty}.
	\end{equation}
	By the definition of $\Delta m$ and $w$, we have
	\begin{align*}
		\int_\br \xi(x)\Delta m(s,x)dx=&\int_\br w(t,x)\Delta\mu(x)dx +\int_t^s \int_\br [(a(x,m'(r))-a(x,m(r)))w_{xx}(r,x)\\
		&+(b(x,m'(r))-b(x,m(r)))w_{x}(r,x)]m(r,x) dxds.
	\end{align*}
	From Assumption \eqref{L1:A1}, estimate \eqref{L1:lem1_2} and the fact that $m(r,\cdot)$ is a probability density, we have
	\begin{align*}
		|\int_\br \xi(x)\Delta m(s,x)dx|&\le \|w(t,\cdot)\|_{2,\infty}\|\Delta\mu\|_{(2,\infty)'}+L\int_t^s \|\Delta m(r,\cdot)\|_{(2,\infty)'}\|w(r,\cdot)\|_{2,\infty} dr\\
		&\le C(\gamma,L)\|\xi\|_{2,\infty}\big(\|\Delta\mu\|_{(2,\infty)'}+\int_t^s \|\Delta m(r,\cdot)\|_{(2,\infty)'}dr\big).
	\end{align*}
	The above eatimate holds true for any $\xi\in W^{2,\infty}(\brd)$, so we have
	\begin{align*}
		\|\Delta m(s,\cdot)\|_{(2,\infty)'}\le C(\gamma,L)\big(\|\Delta\mu\|_{(2,\infty)'}+\int_t^s \|\Delta m(r,\cdot)\|_{(2,\infty)'}dr\big).
	\end{align*}
    From Gronwall's inequality, we have \eqref{L1:lem1_0}.
\end{proof}

As a consequence of Lemmas~\ref{thm1} and \ref{L1:lem1}, we have the following existence and uniqueness results of FP equation \eqref{pde_m} with initial $\mu\in L^1(\brd)$ by using an approximation method.

\begin{theorem}\label{L1:thm1}
	Let Assumptions (H1), (H2) and \eqref{L1:A1} be satisfied and $\mu\in L^{1}(\brd)$ be a probability density. Then, FP equation \eqref{pde_m} has a unique solution $m\in L^\infty([t,T];(W^{2,\infty}(\brd))')$ such that for $s\in[t,T]$, $m(s,\cdot)\in L^1(\brd)$ is a probability density.
\end{theorem}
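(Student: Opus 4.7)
The plan is to obtain existence by an $L^1$-approximation of $\mu$ together with the stability estimate of Lemma~\ref{L1:lem1}, and to deduce uniqueness directly from that same lemma. First I would pick a sequence of probability densities $\{\mu_n\}_{n\geq 1}\subset W^{3,2}(\brd)\cap L^1(\brd)$ with $\mu_n\to\mu$ in $L^1(\brd)$ (for instance via mollification, smooth cutoff and normalisation). Since $\|\cdot\|_{(2,\infty)'}\le\|\cdot\|_{L^1(\brd)}$, the sequence is simultaneously Cauchy in $(W^{2,\infty}(\brd))'$. For each $n$, Lemma~\ref{thm1} yields a solution $m_n:=m^{t,\mu_n}$ of \eqref{pde_m} enjoying \eqref{estimate_m}, and I would verify that $m_n(s,\cdot)$ is a probability density for every $s\in[t,T]$: the divergence form of \eqref{pde_m} gives mass conservation $\|m_n(s,\cdot)\|_{L^1(\brd)}=1$, while non-negativity follows either by a maximum principle applied to the linearised equation with $m_n$ already determined, or by interpreting $m_n(s,\cdot)$ as the marginal law of the associated McKean--Vlasov SDE.

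Applying Lemma~\ref{L1:lem1} to pairs $(m_n,m_{n'})$ then gives
$$\sup_{s\in[t,T]}\|m_n(s,\cdot)-m_{n'}(s,\cdot)\|_{(2,\infty)'}\le C(\gamma,L,T)\|\mu_n-\mu_{n'}\|_{(2,\infty)'},$$
so $\{m_n\}$ is Cauchy in $C([t,T];(W^{2,\infty}(\brd))')$ with some limit $m$. To show $m$ solves \eqref{pde_m} one tests against $\varphi\in C_c^\infty((t,T)\times\brd)\subset W^{2,\infty}(\brd)$ and passes to the limit in the nonlinear coefficients using the Lipschitz property in $(W^{2,\infty})'$ from \eqref{L1:A1}, together with strong convergence of $m_n$ in $(W^{2,\infty})'$ and the uniform $L^1$-bound on $m_n(s,\cdot)$. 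Non-negativity and unit mass of $m(s,\cdot)$ in the distributional sense then follow by testing the $(W^{2,\infty})'$-limit against non-negative $\varphi\in W^{2,\infty}(\brd)$ and against the constant function $1\in W^{2,\infty}(\brd)$, respectively.

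The decisive step, and the main obstacle I anticipate, is showing that $m(s,\cdot)$ lies in $L^1(\brd)$ rather than being merely a probability measure: $(W^{2,\infty})'$-convergence plus $L^1$-boundedness alone does not prevent the limit from carrying a singular part. My plan is to upgrade Lemma~\ref{L1:lem1} to a genuine $L^1$-contraction by solving the dual backward problem \eqref{L1:lem1_1} with only bounded (instead of $W^{2,\infty}$) terminal data and exploiting the parabolic smoothing of the adjoint together with the uniform bounds on $(b,\sigma)$, so as to obtain
$$\sup_{s\in[t,T]}\|m_n(s,\cdot)-m_{n'}(s,\cdot)\|_{L^1(\brd)}\le C(\gamma,L,T)\|\mu_n-\mu_{n'}\|_{L^1(\brd)}.$$
This makes $\{m_n(s,\cdot)\}$ Cauchy in $L^1(\brd)$ for every $s$, so the limit lies in $L^1(\brd)$ and is a probability density.

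Finally, uniqueness is immediate from Lemma~\ref{L1:lem1}: if $m$ and $m'$ both satisfy the conclusions, they fulfil the hypotheses of that lemma (each $m(s,\cdot),m'(s,\cdot)$ being an $L^1$ probability density), and the estimate applied with $\mu=\mu'$ forces $\sup_{s}\|m(s,\cdot)-m'(s,\cdot)\|_{(2,\infty)'}=0$; combined with the pointwise $L^1$ character this gives $m\equiv m'$ as $L^1$-valued curves.
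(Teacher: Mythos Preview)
Your overall strategy matches the paper's: approximate $\mu$ by smooth probability densities, invoke Lemma~\ref{thm1} for the approximants, use Lemma~\ref{L1:lem1} to get a Cauchy sequence in $C([t,T];(W^{2,\infty}(\brd))')$, pass to the limit in the weak formulation via \eqref{L1:A1}, and read off uniqueness from Lemma~\ref{L1:lem1}. (Minor point: $L^2$ approximants already suffice for Lemma~\ref{thm1}; the extra $W^{3,2}$ regularity is unnecessary.)

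Where you diverge is the $L^1$-membership step, which you rightly single out as the crux. Your plan to upgrade Lemma~\ref{L1:lem1} to an $L^1$-contraction by running the adjoint equation \eqref{L1:lem1_1} with merely bounded terminal data is more work than the paper does, and it faces a genuine difficulty: with $\xi\in L^\infty$ only, parabolic smoothing gives $\|w_{xx}(r,\cdot)\|_{L^\infty}\lesssim (s-r)^{-1}$, so the error integral $\int_t^s \|\Delta m(r,\cdot)\|_{L^1}\|w(r,\cdot)\|_{2,\infty}\,dr$ picks up a non-integrable singularity and the Gronwall step does not close directly. The paper avoids this entirely. Once the limit $m\in L^\infty([t,T];(W^{2,\infty}(\brd))')$ is known to be a weak solution, assumption \eqref{L1:A1} makes the coefficients $a(\cdot,m(s,\cdot))$ and $b(\cdot,m(s,\cdot))$ well defined with the regularity of (H2); freezing them turns \eqref{pde_m} into a \emph{linear} Fokker--Planck equation with initial datum the probability density $\mu\in L^1(\brd)$, and the comparison theorem for that linear equation yields that $m(s,\cdot)$ is itself a probability density. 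So the $L^1$ conclusion comes essentially for free once existence in $(W^{2,\infty})'$ is settled, with no need for an $L^1$-stability estimate.
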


\begin{proof}
	We only consider the one-dimensional case $d=1$. For a probability density $\mu\in L^{1}(\brd)$, we can find a sequence of probability densities $\{\mu_n\}\subset L^2(\brd)$ such that $\lim_{n\to+\infty}\mu_n=\mu$ in $L^1(\brd)$. From Lemma~\ref{thm1},  FP equation \eqref{pde_m} has a unique solution $m_n\in L^{\infty}([t,T];L^{2}(\brd))$ with initial $\mu_n$. And since $\mu_n$ is a probability density, from the comparison theorem we know that for $s\in[t,T]$, $m_n(s,\cdot)$ is a probability density. From Lemma~\ref{L1:lem1} we know that
	\begin{align*}
		\sup_{t\le s\le T}\|(m_{n_1}-m_{n_2})(s,\cdot)\|_{(2,\infty)'}&\le C(\gamma,L,T)\|\mu_{n_1}-\mu_{n_2}\|_{(2,\infty)'}\\
		&\le C(\gamma,L,T)\|\mu_{n_1}-\mu_{n_2}\|_{L^1(\brd)}, \quad n_1,n_2\geq 1.
	\end{align*}
    Therefore, there exists $m\in L^\infty([t,T];(W^{2,\infty}(\brd))')$ such that
    \begin{align}\label{L1:thm1_1}
    	\lim_{n\to +\infty}\sup_{t\le s\le T}\|(m_{n}-m)(s,\cdot)\|_{(2,\infty)'}=0.
    \end{align}
	For any text function $\phi\in C_c^\infty([t,T)\times\brd)$, we have
	\begin{align*}
		&\int_\br \phi(t,x)\mu_n(x)dx+\int_t^T\int_\br [\phi_s(s,x)+a(x,m_n(s))\phi_{xx}(s,x)\\
		&\qquad+b(x,m_n(s))\phi_x(s,x)] m_n(s,x)dxds=0,\qquad n\geq 1.
	\end{align*}
	From \eqref{L1:thm1_1} and the fact that $\phi\in C_c^\infty([t,T)\times\brd)$, we have
	\begin{align*}
		&\lim_{n\to +\infty}\int_\br \phi(t,x)\mu_n(x)dx=\int_\br \phi(t,x)\mu(x)dx;\\
		&\lim_{n\to +\infty}\int_t^T\int_\br [\phi_s(s,x)+a(x,m_n(s))\phi_{xx}(s,x)+b(x,m_n(s))\phi_x(s,x)] m_n(s,x)dxds\\
		&=\int_t^T\int_\br [\phi_s(s,x)+a(x,m(s))\phi_{xx}(s,x)+b(x,m(s))\phi_x(s,x)] m(s,x)dxds.
	\end{align*}
	Therefore, m is a solution of FP equation \eqref{pde_m} with initial $\mu$. The uniqueness result is a direct consequence of Lemma~\ref{L1:lem1}. And since $\mu$ is a probability density, from the comparison theorem we know that for $s\in[t,T]$, $m(s,\cdot)$ is a probability density.
\end{proof}

Next we study the derivative of $m$ with respect to initial $\mu\in L^1(\brd)$. We first give the definition of differentiable functions on $L^1(\brd)$. We say a function $f:L^1(\brd)\to\br$ is differentiable, if there exists a function $L^1(\brd)\times\brd\ni(m,x)\mapsto \frac{\dd f}{\dd m}(m)(x)$ such that $\frac{\dd f}{\dd m}(m)(\cdot)\in L^\infty(\brd)$ for all $m\in L^1(\brd)$, and for any $\tilde{m}\in L^1(\brd)$,
\begin{equation*}
	\frac{d}{d \theta}f(m+\theta \tilde{m})\Big|_{\theta=0}=\int_{\brd} \frac{\dd f}{\dd m}(m)(x)\tilde{m}(x)dx.
\end{equation*}
We give an example.
\begin{example}
	Let $h\in L^{2}(\brd)\cap L^\infty(\brd)$. The function $f$ defined as
	\begin{align*}
		f(m):=\int_{\brd} h(x)m(x)dx, \quad m\in L^2(\brd)\cup L^1(\brd),
	\end{align*}
    is differentiable in both $L^2(\brd)$ and $L^1(\brd)$.
\end{example}
Besides Assumptions (H1), (H2) and \eqref{L1:A1}, we assume that the functionals $f(x,\cdot):=b_i,\sigma_{ij}(x,\cdot):L^1(\brd)\to \br$ are differentiable for $x\in\brd$ and $1\le i,j\le d$, and satisfy
\begin{align}\label{L1:A2}
	\|\frac{\dd f}{\dd m}(x,m)(\cdot)\|_{2,\infty}\le L,\qquad (x,m)\in \brd\times L^1(\brd).
\end{align}
With an approximation method similar to the proofs of Lemma~\ref{L1:lem1} and Theorem~\ref{L1:thm1}, we have the following existence and uniqueness results of equation \eqref{tm}.
\begin{theorem}\label{L1:thm2}
	Let Assumptions (H1), (H2), \eqref{L1:A1} and \eqref{L1:A2} be satisfied. Let $\mu\in L^{1}(\brd)$ be a probability density and $m^{t,\mu}$ be the solution of equation \eqref{pde_m} with initial $\mu$. Let $\tilde{\mu}\in L^1(\brd)$. Then, equation \eqref{tm} has a unique solution $\tm^{t,\mu}(\tilde{\mu})\in L^\infty([t,T];(W^{2,\infty}(\brd))')$, such that
	\begin{align*}
		\sup_{t\le s\le T}\|\tm^{t,\mu}(\tilde{\mu})(s,\cdot)\|_{(2,\infty)'}\le C(\gamma,L,T)\|\tilde{\mu}\|_{(2,\infty)'}.
	\end{align*}
\end{theorem}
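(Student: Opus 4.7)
The plan is to argue by approximation, mirroring the strategy of Lemma~\ref{L1:lem1} and Theorem~\ref{L1:thm1}. First I would choose sequences $\{\mu_n\}\subset W^{1,2}(\brd)$ of probability densities with $\mu_n\to\mu$ in $L^1(\brd)$ and $\{\tilde\mu_n\}\subset L^2(\brd)$ with $\tilde\mu_n\to\tilde\mu$ in $L^1(\brd)$. Theorem~\ref{L1:thm1} then provides $m^{t,\mu_n}\to m^{t,\mu}$ in $L^\infty([t,T];(W^{2,\infty}(\brd))')$, while Lemma~\ref{thm4} yields a smooth approximate solution $\tm^{t,\mu_n}(\tilde\mu_n)\in C^0([t,T];L^2(\brd))$ of the corresponding equation~\eqref{tm} built from the pair $(\mu_n,\tilde\mu_n)$. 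The task then reduces to obtaining a uniform-in-$n$ bound on $\tm^{t,\mu_n}(\tilde\mu_n)$ in $L^\infty([t,T];(W^{2,\infty}(\brd))')$ and passing to the limit.

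For the a priori estimate I would adapt the duality device of Lemma~\ref{L1:lem1}: for $s\in(t,T]$ and $\xi\in W^{2,\infty}(\brd)$, introduce the backward adjoint parabolic PDE on $[t,s]$,
\begin{equation*}
\phi_r(r,x)+\sum_{i,j=1}^d a_{ij}(x,m^{t,\mu_n}(r,\cdot))\phi_{x_ix_j}(r,x)+\sum_{i=1}^d b_i(x,m^{t,\mu_n}(r,\cdot))\phi_{x_i}(r,x)=0,\quad \phi(s,\cdot)=\xi.
\end{equation*}
The Schauder-type argument invoked for~\eqref{L1:lem1_2} (cf.\ \cite[Proposition~2.2]{TSE}) supplies an $n$-independent estimate $\sup_{r\in[t,s]}\|\phi(r,\cdot)\|_{2,\infty}\le C(\gamma,L)\|\xi\|_{2,\infty}$, using only the uniform ellipticity constant $\gamma$ from~(H1) and the uniform $C^{0,1}$-bound on $(a_{ij},b_i)(\cdot,m^{t,\mu_n}(r,\cdot))$ coming from~(H2). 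Pairing equation~\eqref{tm} with $\phi$ and integrating by parts (twice in space against the divergence-form diffusion part, once for the drift part, once in time) produces the duality identity
\begin{align*}
\int_\brd\xi(x)\tm^{t,\mu_n}(\tilde\mu_n)(s,x)\,dx
&=\int_\brd\phi(t,x)\tilde\mu_n(x)\,dx\\
&\quad+\sum_{i,j}\int_t^s\!\!\int_\brd\phi_{x_ix_j}(r,x)m^{t,\mu_n}(r,x)\Big[\int_\brd\tfrac{\dd a_{ij}}{\dd m}(x,m^{t,\mu_n}(r,\cdot))(\xi')\tm^{t,\mu_n}(\tilde\mu_n)(r,\xi')d\xi'\Big]dx\,dr\\
&\quad+\sum_i\int_t^s\!\!\int_\brd\phi_{x_i}(r,x)m^{t,\mu_n}(r,x)\Big[\int_\brd\tfrac{\dd b_i}{\dd m}(x,m^{t,\mu_n}(r,\cdot))(\xi')\tm^{t,\mu_n}(\tilde\mu_n)(r,\xi')d\xi'\Big]dx\,dr.
\end{align*}
By Assumption~\eqref{L1:A2}, each inner pairing against $\tm^{t,\mu_n}(\tilde\mu_n)(r,\cdot)$ is bounded by $L\|\tm^{t,\mu_n}(\tilde\mu_n)(r,\cdot)\|_{(2,\infty)'}$; combining this with $\|m^{t,\mu_n}(r,\cdot)\|_{L^1(\brd)}=1$ (since $m^{t,\mu_n}(r,\cdot)$ is a probability density) and the $W^{2,\infty}$-bound on $\phi$ then majorizes the right-hand side by $C(\gamma,L)\|\xi\|_{2,\infty}\bigl(\|\tilde\mu_n\|_{(2,\infty)'}+\int_t^s\|\tm^{t,\mu_n}(\tilde\mu_n)(r,\cdot)\|_{(2,\infty)'}dr\bigr)$. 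Taking the supremum over $\|\xi\|_{2,\infty}\le 1$ and invoking Gronwall's inequality yields the desired estimate $\sup_{t\le s\le T}\|\tm^{t,\mu_n}(\tilde\mu_n)(s,\cdot)\|_{(2,\infty)'}\le C(\gamma,L,T)\|\tilde\mu_n\|_{(2,\infty)'}$.

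To pass to the limit I would apply the same scheme to the difference $\tm^{t,\mu_{n_1}}(\tilde\mu_{n_1})-\tm^{t,\mu_{n_2}}(\tilde\mu_{n_2})$. The only additional source terms generated involve $a_{ij}(\cdot,m^{t,\mu_{n_1}})-a_{ij}(\cdot,m^{t,\mu_{n_2}})$, $\frac{\dd a_{ij}}{\dd m}(\cdot,m^{t,\mu_{n_1}})-\frac{\dd a_{ij}}{\dd m}(\cdot,m^{t,\mu_{n_2}})$, and their $b_i$-counterparts, all controlled via~\eqref{L1:A1}--\eqref{L1:A2} by $\sup_{r}\|m^{t,\mu_{n_1}}(r,\cdot)-m^{t,\mu_{n_2}}(r,\cdot)\|_{(2,\infty)'}\le C\|\mu_{n_1}-\mu_{n_2}\|_{L^1(\brd)}$, which vanishes by Lemma~\ref{L1:lem1}. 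This shows $\{\tm^{t,\mu_n}(\tilde\mu_n)\}$ is Cauchy in $L^\infty([t,T];(W^{2,\infty}(\brd))')$; its limit $\tm^{t,\mu}(\tilde\mu)$ inherits the uniform estimate, and a further limit passage in the weak formulation of~\eqref{tm} shows that it solves the equation. Uniqueness is then immediate from the a priori estimate applied to the difference of two solutions with vanishing datum.

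The main obstacle I anticipate is securing the $n$-independent $W^{2,\infty}$-bound on the adjoint PDE: its coefficients $a_{ij}(\cdot,m^{t,\mu_n}(r,\cdot))$ and $b_i(\cdot,m^{t,\mu_n}(r,\cdot))$ are uniformly bounded and Lipschitz in $x$ by~(H2), but only measurable in $r$, so one must verify that the parabolic Schauder theory invoked in~\cite{TSE,LO} extends to this setting with constants depending solely on $\gamma$ and $L$ and not on any Sobolev regularity of $m^{t,\mu_n}$ itself. Once this uniform parabolic estimate is established, the remaining steps are a routine variant of the arguments already given for Lemma~\ref{L1:lem1} and Theorem~\ref{L1:thm1}.
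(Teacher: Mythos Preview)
Your proposal is correct and follows precisely the route the paper indicates: the paper's own proof consists of the single sentence ``With an approximation method similar to the proofs of Lemma~\ref{L1:lem1} and Theorem~\ref{L1:thm1},'' and your outline unpacks exactly that---duality against the backward adjoint to obtain the $(W^{2,\infty})'$ a priori estimate, then approximation and Gronwall. One small point to tighten: in the Cauchy step you assert that the source term involving $\frac{\partial a_{ij}}{\partial m}(\cdot,m^{t,\mu_{n_1}})-\frac{\partial a_{ij}}{\partial m}(\cdot,m^{t,\mu_{n_2}})$ is controlled via \eqref{L1:A1}--\eqref{L1:A2}, but \eqref{L1:A2} furnishes only a uniform $W^{2,\infty}$ bound on $\frac{\partial f}{\partial m}$, not Lipschitz continuity in $m$ (that is \eqref{L1:A3}); the cleanest way around this is to run the contraction/fixed-point argument directly in $L^\infty([t,T];(W^{2,\infty}(\brd))')$ for the \emph{limit} coefficients $m^{t,\mu}$ rather than compare two approximate flows, since then the nonlocal source difference carries only $\frac{\partial a_{ij}}{\partial m}(\cdot,m^{t,\mu})$ paired against $\hat m_1-\hat m_2$ and \eqref{L1:A2} alone suffices.
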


Actually, if we further assume that the functionals $f:=b_i,\sigma_{ij},\ 1\le i,j\le d$, satisfy for $x\in\brd$ and $m,m'\in L^1(\brd)\subset (W^{2,\infty}(\brd))'$,
\begin{equation}\label{L1:A3}
	\begin{split}
		&\|\frac{\dd f}{\dd m}(x,m)(\cdot)\|_{4,\infty}\le L,\quad \|f(\cdot,m')-f(\cdot,m)\|_{2,\infty}\le L\|m'-m\|_{(2,\infty)'},\\
		&\|\frac{\dd f}{\dd m}(x,m')(\cdot)-\frac{\dd f}{\dd m}(x,m)(\cdot)\|_{2,\infty}\le L\|m'-m\|_{(2,\infty)'},
	\end{split}
\end{equation}
we have the following result, whose proof is similar to that of Lemma~\ref{L1:lem1} and is omitted here.

\begin{theorem}
	Let Assumptions (H1), (H2), \eqref{L1:A1}, \eqref{L1:A2} and \eqref{L1:A3} be satisfied. Let $\mu,\mu'\in L^{1}(\brd)$ be probability densities and $\tilde{\mu}:=\mu'-\mu$. Let $m^{t,\mu+h\tilde{\mu}}$ be the solution of equation \eqref{pde_m} with initial $\mu+h\tilde{\mu}$ for $h\in[0,1]$ and let $\tm^{t,\mu}(\tilde{\mu})$ be the solution of equation \eqref{tm}. Then,
	\begin{align*}
		\sup_{t\le s\le T}\|\frac{m^{t,\mu+h\tilde{\mu}}-m^{t,\mu}}{h}(s,\cdot)-\tm^{t,\mu}(\tilde{\mu})(s,\cdot)\|_{(4,\infty)'}\le C(\gamma,L,T)\|\tilde{\mu}\|^2_{(2,\infty)'}h.
	\end{align*}
    That is, $\tm^{t,\mu}(\tilde{\mu})(s,\cdot)$ is the derivative of $m^{t,\mu}(s,\cdot)$ along the direction $\tilde{\mu}$.
\end{theorem}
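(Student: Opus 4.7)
The plan is to mimic the duality argument from the proof of Lemma~\ref{L1:lem1}, applied to the error
$$\delta_h(s,\cdot) := \tm_h(s,\cdot) - \tm^{t,\mu}(\tilde\mu)(s,\cdot), \qquad \tm_h := \tfrac{m^{t,\mu+h\tilde\mu} - m^{t,\mu}}{h},$$
with zero initial condition $\delta_h(t,\cdot)=0$. First I would derive the PDE satisfied by $\delta_h$ by subtracting \eqref{tm} from the equation for $\tm_h$. The result takes the form $\dd_s \delta_h = \mathcal{L}^h[\delta_h] + \mathcal{N}^\mu[\delta_h] + F_h$, where $\mathcal{L}^h$ is the adjoint of the second-order parabolic operator with coefficients $a_{ij}(\cdot, m^{t,\mu+h\tilde\mu})$, $b_i(\cdot, m^{t,\mu+h\tilde\mu})$ acting on $\delta_h$; $\mathcal{N}^\mu$ is the linear nonlocal operator $\sum_{i,j}\dd^2_{x_ix_j}[\int \frac{\dd a_{ij}}{\dd m}(x, m^{t,\mu})(\xi)(\cdot)(\xi)d\xi \cdot m^{t,\mu}] - \sum_i\dd_{x_i}[\int \frac{\dd b_i}{\dd m}(x, m^{t,\mu})(\xi)(\cdot)(\xi)d\xi \cdot m^{t,\mu}]$; and $F_h$ collects the residual source terms.

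Next I would decompose $F_h = F_h^{(\text{coef})} + F_h^{(\text{Tay})}$. The first piece arises from the identity $a_{ij}(\cdot, m^{t,\mu+h\tilde\mu})\tm_h = a_{ij}(\cdot,m^{t,\mu+h\tilde\mu})\delta_h + [a_{ij}(\cdot,m^{t,\mu+h\tilde\mu}) - a_{ij}(\cdot,m^{t,\mu})]\tm + a_{ij}(\cdot,m^{t,\mu})\tm$ (and analogously for $b_i$), yielding
$$F_h^{(\text{coef})} = \sum_{i,j}\dd^2_{x_ix_j}\{[a_{ij}(\cdot, m^{t,\mu+h\tilde\mu}) - a_{ij}(\cdot,m^{t,\mu})]\tm\} - \sum_i\dd_{x_i}\{[b_i(\cdot, m^{t,\mu+h\tilde\mu}) - b_i(\cdot,m^{t,\mu})]\tm\}.$$
By the Lipschitz estimate in \eqref{L1:A3} and Lemma~\ref{L1:lem1}, the coefficient differences are bounded in $W^{2,\infty}(\brd)$ by $Ch\|\tilde\mu\|_{(2,\infty)'}$. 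The second piece $F_h^{(\text{Tay})}$ is the Taylor remainder of the expansion of $h^{-1}[a_{ij}(x,m^{t,\mu+h\tilde\mu}) - a_{ij}(x,m^{t,\mu})]$ around $m^{t,\mu}$, namely an integrand of the form $\int_0^1\!\int[\frac{\dd a_{ij}}{\dd m}(x,m^{t,\mu}+\theta h\tm_h)(\xi) - \frac{\dd a_{ij}}{\dd m}(x,m^{t,\mu})(\xi)]\tm_h(\xi)d\xi d\theta$; by the second Lipschitz bound in \eqref{L1:A3} this is bounded pointwise in $x$ by $Ch\|\tm_h\|^2_{(2,\infty)'}$, and hence by $Ch\|\tilde\mu\|^2_{(2,\infty)'}$ using that $\sup_s\|\tm_h(s,\cdot)\|_{(2,\infty)'}\le C\|\tilde\mu\|_{(2,\infty)'}$ (a direct consequence of Lemma~\ref{L1:lem1} applied to $m^{t,\mu+h\tilde\mu} - m^{t,\mu}$).

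For the duality step, fix $s \in (t,T]$ and $\phi \in W^{4,\infty}(\brd)$, and let $w(r,x)$ solve the backward linear PDE
\begin{equation*}
-\dd_r w = \sum_{i,j}a_{ij}(x, m^{t,\mu+h\tilde\mu}(r,\cdot)) w_{x_ix_j} + \sum_i b_i(x, m^{t,\mu+h\tilde\mu}(r,\cdot)) w_{x_i},\quad r\in[t,s),
\end{equation*}
with terminal condition $w(s,\cdot)=\phi$; iterating the classical parabolic regularity estimates used in the proof of Lemma~\ref{L1:lem1} (differentiating the equation twice in $x$ and invoking the bounded higher-order $x$-derivatives of the coefficients implicit in (H2)) yields $\sup_{r\in[t,s]}\|w(r,\cdot)\|_{4,\infty}\le C(\gamma,L)\|\phi\|_{4,\infty}$. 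Testing the PDE for $\delta_h$ against $w$, the principal parts cancel by formal adjointness, giving
$$\int_{\brd}\phi(x)\delta_h(s,x)dx = \int_t^s\langle \mathcal{N}^\mu[\delta_h](r,\cdot), w(r,\cdot)\rangle dr + \int_t^s \langle F_h(r,\cdot), w(r,\cdot)\rangle dr.$$
After moving the spatial derivatives onto $w$, I would bound each term as follows. The $\mathcal{N}^\mu$ contribution pairs $\delta_h(r,\cdot)\in (W^{4,\infty})'$ against $\frac{\dd a_{ij}}{\dd m}(x,m^{t,\mu}(r))(\cdot)\in W^{4,\infty}$ (valid by \eqref{L1:A3}), producing $\le CL\int_t^s\|\delta_h(r,\cdot)\|_{(4,\infty)'}\|\phi\|_{4,\infty}dr$, the standard Gronwall contribution. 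The $F_h^{(\text{coef})}$ term pairs $\tm(r,\cdot)\in (W^{2,\infty})'$ (whose norm is $\le C\|\tilde\mu\|_{(2,\infty)'}$ by Theorem~\ref{L1:thm2}) against the $W^{2,\infty}$-valued product of the coefficient difference and $w_{x_ix_j}\in W^{2,\infty}$, giving $\le Ch\|\tilde\mu\|^2_{(2,\infty)'}\|\phi\|_{4,\infty}$. The $F_h^{(\text{Tay})}$ term is integrated against $m^{t,\mu}(r,\cdot) w_{x_ix_j}(r,\cdot)$, using $\|m^{t,\mu}(r,\cdot)\|_{L^1(\brd)}=1$ to bound the result by $\|F_h^{(\text{Tay})}\|_{\infty}\|w\|_{2,\infty}\le Ch\|\tilde\mu\|^2_{(2,\infty)'}\|\phi\|_{4,\infty}$. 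Taking supremum over $\|\phi\|_{4,\infty}\le 1$ and applying Gronwall's inequality closes the estimate.

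The main obstacle is the combinatorial bookkeeping across three different scales of Sobolev regularity, namely $(W^{4,\infty})'$ for $\delta_h$, $(W^{2,\infty})'$ for $\tm$ and $\tm_h$, and the various $W^{k,\infty}$ ranges where the coefficients and their functional derivatives live, as well as verifying that assumption \eqref{L1:A3} is precisely the collection of Lipschitz and boundedness hypotheses needed to close each pairing. The choice of the test space $W^{4,\infty}$, hence the $(W^{4,\infty})'$ target norm, is forced by the fact that $\tm$ lives only in $(W^{2,\infty})'$ while coefficient differences have at best $W^{2,\infty}$ regularity, so $w_{x_ix_j}$ must itself be in $W^{2,\infty}$ in order for the duality pairing to be meaningful, which in turn requires $w\in W^{4,\infty}$; the $W^{4,\infty}$ regularity of $w$ is the second technical hurdle, requiring an iterative differentiation of the dual equation and careful use of the bounded higher-order $x$-derivatives of $a$.
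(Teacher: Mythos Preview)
Your proposal is correct and follows precisely the route the paper indicates, namely the duality argument of Lemma~\ref{L1:lem1} adapted to the error $\delta_h=\tm_h-\tm^{t,\mu}(\tilde\mu)$, with the test function now taken in $W^{4,\infty}(\brd)$ so that the pairing against $\tm\in (W^{2,\infty}(\brd))'$ multiplied by a $W^{2,\infty}$ coefficient difference is well defined. Your identification of the two source terms $F_h^{(\text{coef})}$ and $F_h^{(\text{Tay})}$, and the way each is controlled by \eqref{L1:A3} together with Lemma~\ref{L1:lem1} and Theorem~\ref{L1:thm2}, is exactly the intended mechanism; one minor caution is that the $W^{4,\infty}$ bound on the dual solution $w$ requires more $x$-regularity of $a$ and $b$ than (H2) alone supplies (you need roughly $a\in C^2_x$, $b\in C^2_x$ to iterate the estimate of \cite[Proposition~2.2]{TSE} twice), so this step implicitly uses the additional smoothness that the discussion in Section~\ref{l1} tacitly assumes rather than anything ``implicit in (H2)''.
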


Following steps in Sections~\ref{SDE}, \ref{regularity} and \ref{PDE_V}, the function $V:[0,T]\times\brd\times L^1(\brd)\to\br$ defined by
\begin{equation*}
	V(t,x,\mu)=\e[\Phi(X^{t,x,\mu}_T,m^{t,\mu}(T,\cdot))]
\end{equation*}
is expected to satisfy the following PDE with an approximation method
\begin{equation*}
	\left\{
	\begin{aligned}
		&\frac{\dd V}{\dd t}(t,x,\mu)+\sum_{i=1}^d\frac{\dd V}{\dd x_i}(t,x,\mu)b_i(x,\mu)+\sum_{i,j=1}^d\frac{\dd^2V}{\dd x_i\dd x_j}(t,x,\mu)a_{ij}(x,\mu)\\
		&\qquad+\int_{\brd}\Big[\sum_{i,j=1}^d a_{ij}(\xi,\mu(\cdot))\frac{\dd^2}{\dd \xi_i\dd\xi_j}\frac{\dd V}{\dd \mu}(t,x,\mu)(\xi)\\
		&\qquad+\sum_{i=1}^d b_i(\xi,\mu(\cdot))\frac{\dd}{\dd\xi_i}\frac{\dd V}{\dd \mu}(t,x,\mu)(\xi)\Big]\mu(\xi)d\xi=0,\\
		&\qquad(t,x,\mu)\in[0,T)\times\brd\times L^{1}(\brd);\\
		&V(T,x,\mu)=\Phi(x,\mu),\quad (x,\mu)\in\brd\times L^{1}(\brd).
	\end{aligned}
	\right.
\end{equation*}
 A rigorous proof still requires the differentiability property of the map $\mu\mapsto V(t,x,\mu)$ and relies on a preliminary study of the derivative $\frac{\dd V}{\dd \mu}$. It is challenging to estimate the $W^{2,\infty}(\brd)$-norm of the function $\frac{\dd V}{\dd \mu}(t,x,\mu)(\cdot):\brd\to\br$ for $(t,x,\mu)\in[0,T]\times\brd\times L^1(\brd)$.

\subsection{FP equation with Nemytskii-type coefficients}
Barbu and R\"ockner \cite{BAR} show the existence and uniqueness of the solution $m^{t,\mu}\in C([0,T];L^1(\brd))$ of the FP equation \eqref{pde_m} with initial $\mu\in L^1(\brd)$ for the case of Nemytskii-type coefficients
\begin{align*}
	&b_i(x,u):=\bar{b}_i(x,u(x)),\quad a_{ij}(x,u):=\bar{a}_{ij}(x,u(x)),\quad\bar{b}_i,\bar{a}_{ij}:\brd\times\br\to\br,
\end{align*}
under some regularity assumptions on coefficients $(\bar{a},\bar{b})$ and the non-degenerate condition
\begin{align}\label{l1_3}
	\sum_{i,j=1}^d(\bar{a}_{ij}(x,u)+(\bar{a}_{ij})_u(x,u))u)\xi_i\xi_j\geq\gamma|\xi|^2,\quad \forall\xi\in\brd,\quad (x,u)\in\brd\times \br,
\end{align}
where $\gamma>0$. For any $\tilde{\mu}\in L^1(\brd)$, the derivative $\tm^{t,\mu}(\tilde{\mu})(s,\cdot)$ of the flow $\mu \mapsto m^{t,\mu}(s,\cdot)$ at the density $m^{t,\mu}(s,\cdot)$ along the direction $\tilde{\mu}$ satisfies
\begin{equation}\label{l1_4}
	\left\{
	\begin{aligned}
		&\frac{\dd \tilde{m}^{t,\mu}(\tilde{\mu})}{\dd s}(s,x)-\sum_{i,j=1}^d\frac{\dd^2}{\dd x_i\dd x_j}[A_{ij}(s,x)\tilde{m}^{t,\mu}(\tilde{\mu})(s,x)]\\
		&\qquad+\sum_{i=1}^d\frac{\dd}{\dd x_i}[B_i(s,x)\tilde{m}^{t,\mu}(\tilde{\mu})(s,x)]=0,\quad(s,x)\in(t,T]\times\brd,\\
		&\tilde{m}^{t,\mu}(\tilde{\mu})(t,x)=\tilde{\mu}(x),\quad x\in\brd,
	\end{aligned}
	\right.
\end{equation}
where for $1\le i,j\le d$,
\begin{align*}
	&A_{ij}(s,x):=\bar{a}_{ij}(x,m^{t,\mu}(s,x))+(\bar{a}_{ij})_u(x,m^{t,\mu}(s,x))m^{t,\mu}(s,x),\\
	&B_i(s,x):=\bar{b}_{i}(x,m^{t,\mu}(s,x))+(\bar{b}_{i})_u(x,m^{t,\mu}(s,x))m^{t,\mu}(s,x).
\end{align*}
 Xia et al. \cite[Theorem 3.1]{Zhang} give the existence and uniqueness results on $W^{2,p}$-locally integrable solutions of second-order parabolic equations with the second order coefficient being non-degenerate and uniformly continuous in $x$, and the first order coefficient being locally integrable. From the existence and uniqueness of $m^{t,\mu}\in C([0,T];L^1(\brd))$ and the non-degenerate condition \eqref{l1_3}, we see that PDE \eqref{l1_4} is a linear second-order parabolic equation with integrable coefficients. Therefore, PDE \eqref{l1_4} is expected to have a solution at least when $\bar{a}$ is further independent of $u$.


\appendix

\section{Proof of Statements in Section 2}\label{pf_2}

\subsection{Proof of Lemma~\ref{thm1}}\label{pf_thm1}
The existence and uniqueness of solutions of equation \eqref{pde_m} is a consequence of the existence and uniqueness of solutions of second-order parabolic equations, Assumptions (H1)-(H2) and the application of Schauder fixed point theorem. We refer to \cite{ARA,PC,ELC,KVN,LO}. Here we only prove estimate \eqref{estimate_m}. We only consider the case of dimension $d=1$; the case $d\geq 1$ can be obtained by an easy extension with standard arguments for second-order parabolic equations \cite{ELC}. For notational convenience, we drop the superscript $(t,\mu)$ in the proof. Without loss of generality, let us temporarily suppose that $m$ is smooth enough. Multiplying both sides of PDE \eqref{pde_m} with $m(s,x)$ and integrating them with respect to $x$, from Assumptions (H1)-(H2), we have
\begin{align*}
	\int_{\br} m_sm(s,x)+\gamma|m_x(s,x)|^2dx&\le\int_{\br} 2L|m_x(s,x)||m(s,x)|dx,\quad s\in(t,T].
\end{align*}
From the average inequality and Gronwall's inequality, we have
\begin{align}\label{thm1_s2_m}
	\sup_{t\le s\le T}\|m(s,\cdot)\|_{L^2(\br)}^2+\gamma\int_t^T\|m_x(s,\cdot)\|^2_{L^2(\br)}ds\le \exp(\frac{4L^2T}{\gamma})\|\mu\|_{L^2(\br)}^2.
\end{align}
Multiplying both sides of PDE \eqref{pde_m} with $a(x,m(s,\cdot))^{-1}m_s(s,x)$ and integrating them with respect to $x$, from Assumption (H2) we have for $s\in(t,T]$,
\begin{align*}
	&\int_{\br} a(x,m(s,\cdot))^{-1}|m_s(s,x)|^2+m_{sx}m_{x}(s,x)dx\\
	&\le \int_{\br} 3La(x,m(s,\cdot))^{-1} |m_x(s,x)||m_s(s,x)|+2La(x,m(s,\cdot))^{-1}|m(s,x)||m_s(s,x)|dx.
\end{align*}
In a similar way, from Assumptions (H1)-(H2), estimate \eqref{thm1_s2_m}, the average inequality and Gronwall's inequality, we have
\begin{equation}\label{thm1_s2_mx}
	\begin{split}
		&\sup_{t\le s\le T}\|m_x(s,\cdot)\|^2_{L^2(\br)}+\frac{1}{L}\int_t^T\|m_s(s,\cdot)\|^2_{L^2(\br)}ds\\
		&\qquad\le\frac{\gamma+8L^2T}{\gamma}\exp(\frac{22L^2T}{\gamma})\|\mu\|^2_{W^{1,2}(\br)}.
	\end{split}
\end{equation}
From equation \eqref{pde_m}, Assumptions (H1)-(H2) and estimates \eqref{thm1_s2_m}-\eqref{thm1_s2_mx}, we have
\begin{equation}\label{thm1_s2_mxx}
\begin{split}
	&\int_t^T \|m_{xx}(s,\cdot)\|^2_{L^2(\br)}ds\\
	&\qquad\le \frac{(1+9LT)(3\gamma L+24L^3T)}{\gamma^3}\exp(\frac{22L^2T}{\gamma})\|\mu\|^2_{W^{1,2}(\br)}.
\end{split}
\end{equation}
For $s\ne s'$, from Cauchy's inequality and Assumption (H2), we have
\begin{align*}
	&\|m(s',\cdot)-m(s,\cdot)\|^2_{L^2(\br)}=\int_{\br}\Big|\int_{s}^{s'}\frac{\partial m}{\partial r}(r,x)dr\Big|^2dx\\
	&\qquad\le |s'-s|\int_t^{T} \int_{\br}3L^2|m_{xx}(r,x)|^2+27L^2|m_{x}(r,x)|^2+12L^2|m(r,x)|^2dxdr.
\end{align*}
Plugging \eqref{thm1_s2_m}-\eqref{thm1_s2_mxx} into the last inequality, we have
\begin{equation*}\label{thm1_s2_s}
	\begin{split}
		&\sup_{s\neq s'}\frac{\|m(s,\cdot)-m(s',\cdot)\|^2_{L^2(\br)}}{|s-s'|}\\
		&\qquad\le \frac{3L^2+13\gamma^2LT}{\gamma^3}{(1+9LT)(3\gamma L+24L^3T)}\exp(\frac{22L^2T}{\gamma})\|\mu\|^2_{W^{1,2}(\br)}.
	\end{split}
\end{equation*}

\subsection{Proof of Proposition~\ref{lemma6}}\label{pf_lemma6}
We continue to restrict ourselves within the one-dimensional case $d=1$, and without loss of generality, we temporarily suppose that $m^{t,\mu}$ and $m^{t,\mu'}$ are smooth enough. We set $\Delta m:=m^{t,\mu'}-m^{t,\mu}$. Then, $\Delta m(t,x)=\Delta \mu(x):=\mu'(x)-\mu(x)$ for $x\in\br$, and for $(s,x)\in(t,T]\times\br$,
\begin{equation}\label{lem6_1}
	\begin{split}
		&\frac{\dd \Delta m}{\dd s}(s,x)-\frac{\dd^2}{\dd x^2}[a(x,{m}^{t,\mu'}(s,\cdot))\Delta m(s,x)]+\frac{\dd}{\dd x}[b(x,{m}^{t,\mu'}(s,\cdot))\Delta m(s,x)]\\
		&\qquad -\frac{\dd^2}{\dd x^2}[(a(x,{m}^{t,\mu'}(s,\cdot))-a(x,{m}^{t,\mu'}(s,\cdot)))m^{t,\mu}(s,x)]\\
		&\qquad +\frac{\dd}{\dd x}[(b(x,{m}^{t,\mu'}(s,\cdot))-b(x,{m}^{t,\mu'}(s,\cdot)))m^{t,\mu}(s,x)]=0.
	\end{split}
\end{equation}
Multiplying both sides of equation \eqref{lem6_1} with $\Delta m(s,x)$ and integrating them with respect to $x$, from Assumptions (H1) and (H2) and the average inequality, we have
\begin{align*}
	&\frac{d}{ds}\|\Delta m(s,\cdot)\|^2_{L^2(\br)}+\gamma \|\Delta m_x(s,\cdot)\|_{L^2(\br)}^2\\
	&\qquad\le  C(\gamma,L)(1+\sup_{t\le s\le T}\|m^{t,\mu}(s,\cdot)\|^2_{W^{1,2}(\br)})\|\Delta m(s,\cdot)\|^2_{L^2(\br)},\quad s\in(t,T].
\end{align*}
So from Lemma~\ref{thm1} and Gronwall's inequality, we have
\begin{equation}\label{lem6_2}
	\begin{split}
		&\sup_{t\le s\le T}\|\Delta m(s,\cdot)\|^2_{L^{2}(\br)}+\int_t^T\|\Delta m_{x}(s,\cdot)\|^2_{L^2(\br)}ds\\
		&\qquad\le C\big(\gamma,L,T,\|\mu\|_{W^{1,2}(\br)}\big)\|\Delta \mu\|^2_{L^{2}(\br)}.
	\end{split}
\end{equation}
Next, we multiply both sides of equation \eqref{lem6_1} with $a(x,m^{t,\mu'}(s,\cdot))^{-1}\Delta m_s(s,x)$ and integrate them with respect to $x$, from Assumptions (H1) and (H2), the average inequality and estimate \eqref{lem6_2}, we have for $s\in(t,T]$,
\begin{align*}
	&\frac{1}{L}\|\Delta m_s(s,\cdot)\|^2_{L^2(\br)}+ \frac{d}{ds}\|\Delta m_{x}(s,\cdot)\|_{L^2(\br)}^2\\
	&\le C(\gamma,L)\Big[\|\Delta m_{x}(s,\cdot)\|_{L^2(\br)}^2+(1+\|m^{t,\mu}(s,\cdot)\|^2_{W^{2,2}})\|\Delta m(s,\cdot)\|_{L^2(\br)}^2\Big]\\
	&\le C(\gamma,L)\|\Delta m_{x}(s,\cdot)\|_{L^2(\br)}^2+C\big(\gamma,L,T,\|\mu\|_{W^{1,2}(\br)}\big)(1+\|m^{t,\mu}(s,\cdot)\|^2_{W^{2,2}})\|\Delta \mu\|^2_{L^{2}(\br)}.
\end{align*}
From Lemma~\ref{thm1} and Gronwall's inequality, we have
\begin{equation}\label{lem6_3}
	\begin{split}
		&\sup_{t\le s\le T}\|\Delta m_x(s,\cdot)\|^2_{L^{2}(\br)}+\int_t^T\|\Delta m_{s}(s,\cdot)\|^2_{L^2(\br)}ds\\
		&\qquad\le C\big(\gamma,L,T,\|\mu\|_{W^{1,2}(\br)}\big)\|\Delta \mu\|^2_{W^{1,2}(\br)}.
	\end{split}
\end{equation}
Last, from equation \eqref{lem6_1}, Assumptions (H1) and (H2) and estimate \eqref{lem6_2}, we have for $(s,x)\in[t,T]\times\br$,
\begin{align*}
	&|\Delta m_{xx}(s,x)|\le C(\gamma,L)\Big[|\Delta m_s(s,x)|+|\Delta m_x(s,x)|+|\Delta m(s,x)|\\
	&\qquad\qquad\qquad\qquad\qquad+\|\Delta m(s,\cdot)\|_{L^2(\br)}(|m^{t,\mu}_{xx}(s,x)|+|m^{t,\mu}_{x}(s,x)|+|m^{t,\mu}(s,x)|)\Big]\\
	&\le C(\gamma,L)\Big[|\Delta m_s(s,x)|+|\Delta m_x(s,x)|+|\Delta m(s,x)|\Big]\\
	&\quad+C\big(\gamma,L,T,\|\mu\|_{W^{1,2}(\br)}\big)\|\Delta \mu\|^2_{L^{2}(\br)}\Big[|m^{t,\mu}_{xx}(s,x)|+|m^{t,\mu}_{x}(s,x)|+|m^{t,\mu}(s,x)|\Big].
\end{align*}
Therefore, from estimates \eqref{lem6_2}-\eqref{lem6_3} and Lemma~\ref{thm1}, we have
\begin{equation*}
	\begin{split}
		\int_t^T\|\Delta m_{xx}(s,\cdot)\|^2_{L^2(\br)}ds\le C\big(\gamma,L,T,\|\mu\|_{W^{1,2}(\br)}\big)\|\Delta \mu\|^2_{W^{1,2}(\br)}.
	\end{split}
\end{equation*}

\subsection{Proof of Proposition~\ref{thm1'}}\label{pf_thm1'}
We continue to restrict ourselves within the one-dimensional case $d=1$, and without loss of generality, we temporarily suppose that $m^{t,\mu}$ and $m^{t,\mu'}$ are smooth enough. For notational convenience, we write $m$ for $m^{t,\mu}$ when there is no confusion. We first prove \eqref{thm1'_1}. Taking the derivative of equation \eqref{pde_m} with respect to $s$, we have for $(s,x)\in(t,T]\times\br$,
\begin{equation}\label{prop1'_1}
	\begin{split}
		&\frac{\dd^2 m}{\dd s^2}(s,x)-\frac{\dd^2}{\dd x^2}[a(x,{m}(s,\cdot))\frac{\dd m}{\dd s}(s,x)]+\frac{\dd}{\dd x}[b(x,{m}(s,\cdot))\frac{\dd m}{\dd s}(s,x)]\\
		&\qquad-\frac{\dd^2}{\dd x^2}[\int_\br\frac{\dd a}{\dd m}(x,m(s,\cdot))(\xi)\frac{\dd m}{\dd s}(s,\xi)d\xi m(s,x)]\\
		&\qquad+\frac{\dd }{\dd x}[\int_\br\frac{\dd b}{\dd m}(x,m(s,\cdot))(\xi)\frac{\dd m}{\dd s}(s,\xi)d\xi m(s,x)]=0.
	\end{split}
\end{equation}
Multiplying both sides of equation \eqref{prop1'_1} with $m_s$ and integrating them with respect to $x$, from Assumptions (H1) and (H2), the average inequality and Lemma~\ref{thm1}, we have
\begin{align*}
	&\frac{d}{ds}\| m_s(s,\cdot)\|^2_{L^2(\br)}+\gamma\int_{\br} |m_{sx}(s,x)|^2dx\\
	&\qquad\le C(\gamma,L)(1+\sup_{t\le s\le T}\|m(s,\cdot)\|_{W^{1,2}(\br)}^2)\| m_s(s,\cdot)\|^2_{L^2(\br)}\\
	&\qquad\le C(\gamma,L,T,\|\mu\|_{W^{1,2}(\br)})\| m_s(s,\cdot)\|^2_{L^2(\br)},\quad s\in(t,T].
\end{align*}
Note from Assumption (H2) that
\begin{align*}
	\| m_s(t,\cdot)\|^2_{L^2(\br)}&=\int_{\br} \Big|\frac{\dd^2}{\dd x^2}[a(x,\mu(\cdot))\mu(x)]-\frac{\dd}{\dd x}[b(x,\mu(\cdot))\mu(x)]\Big|^2dx\le C(L)\|\mu\|_{W^{2,2}(\br)}^2.
\end{align*}
Therefore, by Gronwall's inequality, we have
\begin{equation}\label{prop1'_2}
	\begin{split}
		\sup_{t\le s\le T}\|m_s(s,\cdot)\|_{L^2(\br)}^2+\int_t^T\|m_{sx}(s,\cdot)\|^2_{L^2(\br)}ds\le C\big(\gamma,L,T,\|\mu\|_{W^{2,2}(\br)}\big).
	\end{split}
\end{equation}
From equation \eqref{pde_m} and Assumptions (H1) and (H2), we have
\begin{align*}
	|m_{xx}|\le \frac{1}{\gamma}|m_s|+\frac{3L}{\gamma}|m_x|+\frac{2L}{\gamma}|m|.
\end{align*}
So from \eqref{prop1'_2} and Lemma~\ref{thm1}, we have
\begin{equation}\label{prop1'_2.5}
	\begin{split}
		\sup_{t\le s\le T}\| m_{xx}(s,\cdot)\|_{L^2(\br)}
		\le C\big(\gamma,L,T,\|\mu\|_{W^{2,2}(\br)}\big).
	\end{split}
\end{equation}

We now prove \eqref{thm1'_2}. We set $\Delta m:=m^{t,\mu'}-m^{t,\mu}$. The equation of $\Delta m$ is \eqref{lem6_1}. Taking the derivative of equation \eqref{lem6_1} with respect to $s$, we have for $(s,x)\in(t,T]\times\br$,
\begin{align*}
	&\frac{\dd^2 \Delta m}{\dd s^2}(s,x)-\frac{\dd^2}{\dd x^2}\Big[a(x,{m}^{t,\mu'}(s,\cdot))\frac{\dd \Delta m}{\dd s}(s,x)\label{prop1'_3}\\
	&\quad+ \int_{\br} \frac{\dd a}{\dd m}(x,m^{t,\mu'}(s,\cdot))(\xi)\frac{\dd m^{t,\mu'}}{\dd s}(s,\xi)d\xi\Delta m(s,x)\notag\\
	&\quad+(a(x,{m}^{t,\mu'}(s,\cdot))-a(x,{m}^{t,\mu'}(s,\cdot)))\frac{\dd m^{t,\mu}}{\dd s}(s,x)\notag\\
	&\quad+ \int_{\br} \frac{\dd a}{\dd m}(x,m^{t,\mu'}(s,\cdot))(\xi)\frac{\dd \Delta m}{\dd s}(s,\xi)d\xi m^{t,\mu}(s,x)\notag\\
	&\quad+\int_{\br} \big(\frac{\dd a}{\dd m}(x,m^{t,\mu'}(s,\cdot))(\xi)-\frac{\dd a}{\dd m}(x,m^{t,\mu}(s,\cdot))(\xi)\big)\frac{\dd m^{t,\mu}}{\dd s}(s,\xi)d\xi m^{t,\mu}(s,x)\Big]\notag\\
	&\quad+\frac{\dd}{\dd x}\Big[b(x,{m}^{t,\mu'}(s,\cdot))\frac{\dd \Delta m}{\dd s}(s,x)\notag\\
	&\quad+ \int_{\br} \frac{\dd b}{\dd m}(x,m^{t,\mu'}(s,\cdot))(\xi)\frac{\dd m^{t,\mu'}}{\dd s}(s,\xi)d\xi\Delta m(s,x)\notag\\
	&\quad+(b(x,{m}^{t,\mu'}(s,\cdot))-b(x,{m}^{t,\mu'}(s,\cdot)))\frac{\dd m^{t,\mu}}{\dd s}(s,x)\notag\\
	&\quad+ \int_{\br} \frac{\dd b}{\dd m}(x,m^{t,\mu'}(s,\cdot))(\xi)\frac{\dd \Delta m}{\dd s}(s,\xi)d\xi m^{t,\mu}(s,x)]\notag\\
	&\quad+ \int_{\br} \big(\frac{\dd b}{\dd m}(x,m^{t,\mu'}(s,\cdot))(\xi)-\frac{\dd b}{\dd m}(x,m^{t,\mu}(s,\cdot))(\xi)\big)\frac{\dd m^{t,\mu}}{\dd s}(s,\xi)d\xi m^{t,\mu}(s,x)\Big]=0.\notag
\end{align*}
Multiplying both sides of the last equation with $\Delta m_s$ and integrating them with respect to $x$, from Assumptions (H1) and (H2), the average inequality, Lemma~\ref{thm1}, Proposition~\ref{lemma6},  and estimate \eqref{prop1'_2}, we have for $t<s\le T$,
\begin{align*}
	&\frac{d}{ds}\|\Delta m_s(s,\cdot)\|^2_{L^2(\br)}+\|\Delta m_{sx}(s,\cdot)\|^2_{L^2(\br)}\\
	&\le C(\gamma,L)\Big[\big(1+\|m^{t,\mu}(s,\cdot)\|^2_{W^{1,2}(\br)}+\|m^{t,\mu'}(s,\cdot)\|^2_{W^{1,2}(\br)}\big)\|\Delta m_s(s,\cdot)\|^2_{L^2(\br)}\\
	&\quad+\big(1+\|m^{t,\mu}(s,\cdot)\|^2_{W^{1,2}(\br)}+\|m^{t,\mu'}(s,\cdot)\|^2_{W^{1,2}(\br)}\big)\\
	&\quad\times\big(\|m_s^{t,\mu}(s,\cdot)\|^2_{L^2(\br)}+\|m_s^{t,\mu'}(s,\cdot)\|^2_{L^2(\br)}\big)\|\Delta m(s,\cdot)\|^2_{W^{1,2}(\br)}\\
	&\quad+\big(\|m_{sx}^{t,\mu}(s,\cdot)\|^2_{L^2(\br)}+\|m_{sx}^{t,\mu'}(s,\cdot)\|^2_{L^2(\br)}\big)\|\Delta m(s,\cdot)\|^2_{L^2(\br)}\Big]\\
	&\le C(\gamma,L,T,\|\mu\|_{W^{2,2}(\br)},\|\mu'\|_{W^{2,2}(\br)})\Big[\|\Delta m_s(s,\cdot)\|^2_{L^2(\br)}\\
	&\quad+\big(1+\|m_{sx}^{t,\mu}(s,\cdot)\|^2_{L^2(\br)}+\|m_{sx}^{t,\mu'}(s,\cdot)\|^2_{L^2(\br)}\big)\|\Delta\mu\|^2_{W^{1,2}(\br)}\Big]
\end{align*}
Note from Assumption (H2) that
\begin{align*}
	\|\Delta m_s(t,\cdot)\|^2_{L^2(\br)}&=\int_{\br} \Big|\frac{\dd^2}{\dd x^2}[a(x,\mu'(\cdot))\mu'(x)-a(x,\mu(\cdot))\mu(x)]\\
	&\qquad-\frac{\dd}{\dd x}[b(x,\mu'(\cdot))\mu'(x)-b(x,\mu(\cdot))\mu(x)]\Big|^2dx\\
	&\le C(L,\|\mu\|_{W^{2,2}(\br)})\|\Delta\mu\|^2_{W^{2,2}(\br)}.
\end{align*}
Therefore, by Gronwall's inequality and estimate \eqref{prop1'_2}, we have
\begin{equation}\label{prop1'_4}
	\begin{split}
		&\sup_{t\le s\le T}\|\Delta m_s(s,\cdot)\|_{L^2(\br)}^2+\int_t^T\|\Delta m_{sx}(s,\cdot)\|^2_{L^2(\br)}ds\\
		&\qquad\le C\big(\gamma,L,T,\|\mu\|_{W^{2,2}(\br)},\|\mu'\|_{W^{2,2}(\br)}\big)\|\Delta\mu\|^2_{W^{2,2}(\br)}.
	\end{split}
\end{equation}
Last, from equation \eqref{lem6_1} and Assumptions (H1) and (H2), we have
\begin{align*}
	|\Delta m_{xx}|\le C(\gamma,L)\Big[|\Delta m_s|+|\Delta m_x|+|\Delta m|+\|\Delta m\|_{L^2(\br)}(|m_{xx}^{t,\mu}|+|m_x^{t,\mu}|+|m^{t,\mu}|)\Big].
\end{align*}
So from Lemma~\ref{thm1}, Proposition~\ref{lemma6} and estimates \eqref{prop1'_2.5} and \eqref{prop1'_4}, we have
\begin{equation*}
	\begin{split}
		\sup_{t\le s\le T}\|\Delta m_{xx}(s,\cdot)\|^2_{L^2(\br)}\le C\big(\gamma,L,T,\|\mu\|_{W^{2,2}(\br)},\|\mu'\|_{W^{2,2}(\br)}\big)\|\Delta\mu\|^2_{W^{2,2}(\br)}.
	\end{split}
\end{equation*}

We now prove \eqref{thm1''_1}. Taking the derivative of equation \eqref{pde_m} with respect to $x$, we have for $(s,x)\in(t,T]\times\br$,
\begin{equation}\label{thm1''_2}
	\begin{split}
		\frac{\dd^2m}{\dd s\dd x}(s,x)-\frac{\dd^3}{\dd x^3}[a(x,m(s,\cdot))m(s,x)]+\frac{\dd^2}{\dd x^2}[b(x,m(s,\cdot))m(s,x)]=0.
	\end{split}
\end{equation}
From Assumptions (H1) and (H2'), we have
\begin{align}\label{thm1''_4}
	|m_{xxx}|\le C(\gamma,L)\Big[|m_{sx}|+|m_{xx}|+|m_{x}|+|m|\Big].
\end{align}
From Lemma~\ref{thm1} and estimate \eqref{thm1'_1}, we have
\begin{align*}
	&\|m\|_{L^2([t,T];W^{3,2}(\br))}\le C(\gamma,L,T,\|\mu\|_{W^{2,2}(\br)}).
\end{align*}
For $s,s'\in[t,T]$, from equation \eqref{thm1''_2}, Cauchy's inequality and estimate \eqref{thm1'_1}, we have
\begin{align*}
	&\|m_x(s',\cdot)-m_x(s,\cdot)\|^2_{L^2(\br)}\le |s'-s|\int_s^{s'}\int_{\br}\Big|m_{rx}(r,x)\Big|^2dxdr\\
	&\le |s'-s|\int_t^T \|m_{sx}(s,\cdot)\|^2_{L^2(\br)}ds\le C\big(\gamma,L,T,\|\mu\|_{W^{2,2}(\br)}\big) |s'-s|.
\end{align*}

We now prove \eqref{cor2_1}. Multiplying both sides of \eqref{prop1'_1} with $a(x,m(s,\cdot))^{-1}m_{ss}(s,x)$ and integrating them with respect to $x$, from Assumptions (H1) and (H2), the average inequality, Lemma~\ref{thm1} and estimate \eqref{thm1'_1}, we have
\begin{align*}
	&\|m_{ss}(s,\cdot)\|_{L^2(\br)}^2+\frac{d}{ds}\|m_{sx}(s,\cdot)\|^2_{L^2(\br)}\\
	&\qquad\le C(\gamma,L)\Big[\|m_{sx}(s,\cdot)\|^2+(1+\|m(s,\cdot)\|^2_{W^{2,2}(\br)})\|m_s(s,\cdot)\|^2\Big]\\
	&\qquad\le C(\gamma,L)\|m_{sx}(s,\cdot)\|^2+C(\gamma,L,T,\|\mu\|_{W^{2,2}(\br)}),\quad s\in(t,T].
\end{align*}
Note from equation \eqref{thm1''_2} and Assumption (H2') that
\begin{align*}
	\| m_{sx}(t,\cdot)\|^2_{L^2(\br)}&=\int_{\br} \Big|\frac{\dd^3}{\dd x^3}[a(x,\mu(\cdot))\mu(x)]-\frac{\dd^2}{\dd x^2}[b(x,\mu(\cdot))\mu(x)]\Big|^2dx\\
	&\le C(L)\|\mu\|_{W^{3,2}(\br)}^2.
\end{align*}
Therefore, using Gronwall's inequality, we have
\begin{equation}\label{cor2_2}
	\begin{split}
		&\sup_{t\le s\le T}\|m_{sx}(s,\cdot)\|_{L^2(\br)}^2+\int_t^T\|m_{ss}(s,\cdot)\|^2_{L^2(\br)}ds\le C\big(\gamma,L,T,\|\mu\|_{W^{3,2}(\br)}\big).
	\end{split}
\end{equation}
From Lemma~\ref{thm1} and estimates \eqref{thm1'_1}, \eqref{thm1''_4} and \eqref{cor2_2}, we have
\begin{equation*}
	\begin{split}
		&\sup_{t\le s\le T}\|m_{xxx}(s,\cdot)\|_{L^2(\br)}\le C\big(\gamma,L,T,\|\mu\|_{W^{3,2}(\br)}\big).
	\end{split}
\end{equation*}
From equation \eqref{prop1'_1}, Assumptions (H1) and (H2), we have
\begin{align*}
	|m_{sxx}|\le C(\gamma,L)\Big[ |m_{ss}|+|m_{sx}|+|m_{s}|+\|m_s\|_{L^2(\br)}(|m_{xx}|+|m_x|+|m|)\Big].
\end{align*}
Therefore, from Lemma~\ref{thm1} and estimates \eqref{thm1'_1} and \eqref{cor2_2}, we have
\begin{equation*}
	\begin{split}
		&\int_t^T\|m_{sxx}(s,\cdot)\|^2_{L^2(\br)}ds\le C\big(\gamma,L,T,\|\mu\|_{W^{3,2}(\br)}\big).
	\end{split}
\end{equation*}

We now prove \eqref{thm1'''_1}. From Cauchy's inequality and estimate \eqref{cor2_1}, we have for $s,s'\in(t,T]$,
\begin{align*}
	&\|m_s(s',\cdot)-m_s(s,\cdot)\|^2_{L^2(\br)}\le |s'-s|\int_s^{s'}\int_{\br}\Big|m_{rr}(r,x)\Big|^2dxdr\\
	&\qquad\le |s'-s|\int_t^T \|m_{ss}(s,\cdot)\|^2_{L^2(\br)}\le C\big(\gamma,L,T,\|\mu\|_{W^{3,2}(\br)}\big) |s'-s|;\\
	&\|m_{xx}(s',\cdot)-m_{xx}(s,\cdot)\|^2_{L^2(\br)}\le |s'-s|\int_s^{s'}\int_{\br}\Big|m_{rxx}(r,x)\Big|^2dxdr\\
	&\qquad\le |s'-s|\int_t^T \|m_{sxx}(s,\cdot)\|^2_{L^2(\br)}\le C\big(\gamma,L,T,\|\mu\|_{W^{3,2}(\br)}\big) |s'-s|.
\end{align*}
The proof is completed.

\section{Proof of Statements in Section 3}\label{pf_3}
\subsection{Proof of Lemma~\ref{thm4}}\label{pf_thm4}
We continue to restrict ourselves within the one-dimensional case $d=1$. For notational convenience, we write $\tm$ instead of $\tm^{t,\mu}(\tilde{\mu})$ in the proof when there is no ambiguity. We first prove the existence and uniqueness result. The proof relies on the use of Banach fixed point theorem. We define a map $\Phi:C^0([t,T];L^2(\br))\to C^0([t,T];L^2(\br))$ as: for any $\hat{m}\in C^0([t,T];L^2(\br))$, we set $\Phi(\hat{m})=\tm$ as the solution of the following PDE:
\begin{equation}\label{tm_1}
	\left\{
	\begin{aligned}
		&\frac{\dd \tilde{m}}{\dd s}(s,x)-\frac{\dd^2}{\dd x^2}\Big[a(x,m^{t,\mu}(s,\cdot))\tilde{m}(s,x)\\
		&\qquad+\int_{\br}\frac{\dd a}{\dd m}(x,m^{t,\mu}(s,\cdot))(\xi)\hm(s,\xi)d\xi \cdot m^{t,\mu}(s,x)\Big]\\
		&\qquad+\frac{\dd}{\dd x}\Big[b(x,m^{t,\mu}(s,\cdot))\tilde{m}(s,x)\\
		&\qquad+\int_{\br}\frac{\dd b}{\dd m}(x,m^{t,\mu}(s,\cdot))(\xi)\hm(s,\xi)d\xi \cdot m^{t,\mu}(s,x)\Big]=0, \\
		&\qquad\qquad\qquad\qquad\qquad\qquad\qquad\qquad (s,x)\in(t,T]\times\br,\\
		&\tilde{m}(t,x)=\tilde{\mu}(x),\quad x\in\br.
	\end{aligned}
	\right.
\end{equation}
Note from \eqref{estimate_m} that the function
\begin{align*}
	(s,x)\to&\frac{\dd^2}{\dd x^2}\Big[\int_{\br}\frac{\dd a}{\dd m}(x,m^{t,\mu}(s,\cdot))\hm(s,\xi)d\xi \cdot m^{t,\mu}(s,x)\Big]\\ &-\frac{\dd}{\dd x}\Big[\int_{\br}\frac{\dd b}{\dd m}(x,m^{t,\mu}(s,\cdot))\hm(s,\xi)d\xi \cdot m^{t,\mu}(s,x)\Big]
\end{align*}
belongs to $\in L^2([t,T]\times\br)$. So from \cite[Definition and Remark, p.374; Theorems 3 and 4, p.378]{ELC}, we know that equation \eqref{tm_1} has a unique solution $\tm\in C^0([t,T];L^2(\br))$. Therefore, $\Phi$ is well-defined. Let $\hat{m}^i\in C^0([t,T];L^2(\br))$ and $\tm^i=\Phi(\hat{m}^i)$ for $i=1,2$, we set $\Delta \hm=\hm^2-\hm^1$ and $\Delta \tm=\tm^2-\tm^1$. Then, $\Delta \tm$ satisfies
\begin{align*}
	&\frac{\dd \Delta \tilde{m}}{\dd s}(s,x)-\frac{\dd^2}{\dd x^2}\Big[a(x,m^{t,\mu}(s,\cdot))\Delta \tilde{m}(s,x)\\
	&\qquad+\int_{\br}\frac{\dd a}{\dd m}(x,m^{t,\mu}(s,\cdot))(\xi)\Delta \hm(s,\xi)d\xi \cdot m^{t,\mu}(s,x)\Big]\\
	&\qquad+\frac{\dd}{\dd x}\Big[b(x,m^{t,\mu}(s,\cdot))\Delta \tilde{m}(s,x)\\
	&\qquad+\int_{\br}\frac{\dd b}{\dd m}(x,m^{t,\mu}(s,\cdot))(\xi)\Delta \hm(s,\xi)d\xi \cdot m^{t,\mu}(s,x)\Big]=0, \quad (s,x)\in(t,T]\times\br,
\end{align*}
and the initial condition $\Delta \tilde{m}(t,x)=0$  for $x\in\br$. Without loss of generality, we temporarily suppose that the functions $(m,\tm^1,\tm^2)$ are smooth enough. Multiplying both sides of the last equation with $\Delta\tm$ and integrating with respect to $x$, from Assumptions (H1) and (H2), the average inequality and \eqref{estimate_m}, we have for $s\in(t,T]$,
\begin{align*}
	&\frac{d}{ds}\|\Delta\tilde{m}(s,\cdot)\|^2_{L^2(\br)}+\gamma\|\Delta\tilde{m}_x(s,\cdot)\|^2_{L^2(\br)}\\
	&\qquad\le \frac{12L^2}{\gamma}\|\Delta\tm(s,\cdot)\|^2_{L^2(\br)}+\frac{12L^2}{\gamma}\|m^{t,\mu}(s,\cdot)\|^2_{W^{1,2}(\br)}\|\Delta\hm(s,\cdot)\|^2_{L^2(\br)}\\
	&\qquad\le \frac{12L^2}{\gamma}\|\Delta\tm(s,\cdot)\|^2_{L^2(\br)}+ C(\gamma,L,T,\|\mu\|_{W^{1,2}(\br)})\|\Delta\hm(s,\cdot)\|^2_{L^2(\br)}.
\end{align*}
Note that $\|\Delta \tilde{m}(t,\cdot)\|^2_{L^2(\br)}=0$. Therefore, from Gronwall's inequality, we have
\begin{align*}
	\|\Delta\tm\|_{C^0([t,T];L^2(\br))}\le C(\gamma,L,T,\|\mu\|_{W^{1,2}(\br)})|T-t|^{\frac{1}{2}}\|\Delta\hm\|_{C^0([t,T];L^2(\br))}.
\end{align*}
So $\Phi$ is a contraction map for sufficiently small $(T-t)$. For general $(T-t)$, we can divide $(T-t)$ into several parts and on each part $\Phi$ is a contraction. From Banach fixed point theorem, we get the existence and uniqueness result.

We now prove \eqref{estimate_tm1}. We temporarily suppose that the functions $(m,\tm)$ are smooth enough. Multiplying both sides of equation \eqref{tm} with $\tm$ and integrating them with respect to $x$, from Assumptions (H1) and (H2), the average inequality and \eqref{estimate_m}, we have
\begin{align*}
	&\frac{d}{ds}\|\tm(s,\cdot)\|_{L^2(\br)}^2+\gamma\|\tm_x(s,\cdot)\|_{L^2(\br)}^2\le \frac{12L^2}{\gamma}(1+\|m^{t,\mu}(s,\cdot)\|_{W^{1,2}(\br)}^2)\|\tm(s,\cdot)\|_{L^2(\br)}^2\\
	&\qquad\le C(\gamma,L,T,\|\mu\|_{W^{1,2}(\br)})\|\tm(s,\cdot)\|_{L^2(\br)}^2,\quad s\in(t,T].
\end{align*}
Note that
\begin{align*}
	\|\tilde{m}(t,\cdot)\|^2_{L^2(\br)}=\|\tilde{\mu}\|_{L^2(\br)}^2.
\end{align*}
Therefore, from Gronwall's inequality, we have \eqref{estimate_tm1}.

We now prove \eqref{estimate_tm2}. We set for $(s,x)\in[t,T]\times\br$,
\begin{align*}
	&v:=\tm^{t,\mu'}(\tilde{\mu})-\tm^{t,\mu}(\tilde{\mu}),\quad\Delta m :=m^{t,\mu'}-m^{t,\mu},\quad \Delta \mu:=\mu'-\mu,\\
	&\Delta a(s,x):=a(x,m^{t,\mu'}(s,\cdot))-a(x,m^{t,\mu}(s,\cdot)),\\
	&\Delta b(s,x):=b(x,m^{t,\mu'}(s,\cdot))-b(x,m^{t,\mu}(s,\cdot)).
\end{align*}
Then, $v(t,x)=0$ for $x\in\br$, and for $(s,x)\in(t,T]\times\br,$
\begin{align*}
	&\frac{\dd v}{\dd s}(s,x)-\frac{\dd^2}{\dd x^2}\Big[a(x,m^{t,\mu'}(s,\cdot))v(s,x)+\Delta a(s,x)\tm^{t,\mu}(\tilde{\mu})(s,x)\notag\\
	&\qquad+\int_{\br}\frac{\dd a}{\dd m}(x,m^{t,\mu'}(s,\cdot))(\xi)v(s,\xi)d\xi \cdot m^{t,\mu}(s,x)\\
	&\qquad+\int_{\br}\frac{\dd \Delta a}{\dd m}(s,x)(\xi)\tm^{t,\mu}(\tilde{\mu})(s,\xi)d\xi \cdot m^{t,\mu}(s,x)\Big]\notag\\
	&\qquad+\int_{\br}\frac{\dd a}{\dd m}(x,m^{t,\mu'}(s,\cdot))(\xi)\tm^{t,\mu'}(\tilde{\mu})(s,\xi)d\xi \cdot \Delta m(s,x)\notag\\
	&\qquad+\frac{\dd}{\dd x}\Big[b(x,m^{t,\mu'}(s,\cdot))v(s,x)+\Delta b(s,x)\tm^{t,\mu}(\tilde{\mu})(s,x)\notag\\
	&\qquad+\int_{\br}\frac{\dd b}{\dd m}(x,m^{t,\mu'}(s,\cdot))(\xi)v(s,\xi)d\xi \cdot m^{t,\mu}(s,x)\\
	&\qquad+\int_{\br}\frac{\dd \Delta b}{\dd m}(s,x)(\xi)\tm^{t,\mu}(\tilde{\mu})(s,\xi)d\xi \cdot m^{t,\mu}(s,x)\Big]\notag\\
	&\qquad+\int_{\br}\frac{\dd b}{\dd m}(x,m^{t,\mu'}(s,\cdot))(\xi)\tm^{t,\mu'}(\tilde{\mu})(s,\xi)d\xi \cdot \Delta m(s,x)=0.
\end{align*}
We temporarily suppose that the functions $(m^{t,\mu},m^{t,\mu'},\tm^{t,\mu}(\tilde{\mu}),\tm^{t,\mu'}(\tilde{\mu}))$ are smooth enough. Multiplying both sides of the last equation with $v$ and integrating with respect to $x$, from Assumptions (H1) and (H2), the average inequality, Lemma~\ref{thm1}, Proposition~\ref{lemma6} and estimate \eqref{estimate_tm1}, we have for $s\in(t,T]$,
\begin{align*}
	&\frac{d}{ds}\|v(s,\cdot)\|^2_{L^2(\br)}+\|v_x(s,\cdot)\|^2_{L^2(\br)}\\
	&\le C(\gamma,L)\Big[\|v(s,\cdot)\|^2_{L^2(\br)}(1+\|m^{t,\mu}(s,\cdot)\|^2_{W^{1,2}(\br)})\\
	&\quad+\|\Delta m(s,\cdot)\|^2_{L^2(\br)}\|\tm^{t,\mu}(\tilde{\mu})(s,\cdot)\|^2_{W^{1,2}(\br)}+\|\Delta m(s,\cdot)\|^2_{W^{1,2}(\br)}\|\tm^{t,\mu'}(\tilde{\mu})(s,\cdot)\|^2_{L^{2}(\br)}\\
	&\quad+\|\Delta m(s,\cdot)\|^2_{L^{2}(\br)}\|\tm^{t,\mu}(\tilde{\mu})(s,\cdot)\|^2_{L^{2}(\br)}\|m^{t,\mu}(s,\cdot)\|^2_{W^{1,2}(\br)}\Big]\\
	&\le C(\gamma,L,T,\|\mu\|_{W^{1,2}(\br)},\|\mu'\|_{W^{1,2}(\br)})\Big[\|v(s,\cdot)\|^2_{L^2(\br)}+\|\Delta \mu\|^2_{L^2(\br)}\|\tm^{t,\mu}(\tilde{\mu})(s,\cdot)\|^2_{W^{1,2}(\br)}\\
	&\quad+\|\Delta m(s,\cdot)\|^2_{W^{1,2}(\br)}\|\tilde{\mu}\|^2_{L^{2}(\br)}+\|\Delta \mu\|^2_{L^{2}(\br)}\|\tilde{\mu}\|^2_{L^{2}(\br)}\Big].
\end{align*}
By using Gronwall's inequality, Proposition~\ref{lemma6} and estimate \eqref{estimate_tm1}, we have
\begin{align*}
	&\sup_{t\le s\le T}\|v(s,\cdot)\|_{L^2(\br)}\le C(\gamma,L,T,\|\mu\|_{W^{1,2}(\br)},\|\mu'\|_{W^{1,2}(\br)})\|\tilde{\mu}\|_{L^{2}(\br)}\|\Delta \mu\|_{L^2(\br)}.
\end{align*}

\subsection{Proof of Lemma~\ref{lem5}}\label{pf_lem5}
We continue to restrict ourselves within the one-dimensional case $d=1$. For notational convenience, we write $\tm_h$ instead of $\tm^{t,\mu}_h(\tilde{\mu})$ defined in \eqref{m_h:def} and write $\tm$ instead of $\tm^{t,\mu}(\tilde{\mu})$ in the proof. We set $u^h:=\tm_h-\tm$. Then, $u^h(t,x)=0$ for $x\in\br$, and for $(s,x)\in(t,T]\times\br$,
\begin{align*}
	&0=\frac{\dd u^h}{\dd s}(s,x)-\frac{\dd^2}{\dd x^2}\Big[a(x,m^{t,\mu_h}(s,\cdot))u^h(s,x)\\
	&\quad+[a(x,m^{t,\mu_h}(s,\cdot))-a(x,m^{t,\mu}(s,\cdot))]\tm(s,x)\notag\\
	&\quad+\int_0^1\int_{\br}\frac{\dd a}{\dd m}(x,m^{t,\mu}+\lambda h\tm_h(s,\cdot))(\xi)u^h(s,\xi)d\xi d\lambda \cdot m^{t,\mu}(s,x)\notag\\
	&\quad+\int_0^1\int_{\br}[\frac{\dd a}{\dd m}(x,m^{t,\mu}+\lambda h\tm_h(s,\cdot))-\frac{\dd a}{\dd m}(x,m^{t,\mu}(x,\cdot))]\tm(s,\xi)d\xi d\lambda \cdot m^{t,\mu}(s,x)\Big]\notag\\
	&\quad+\frac{\dd}{\dd x}\Big[b(x,m^{t,\mu_h}(s,\cdot))u^h(s,x)+[b(x,m^{t,\mu_h}(s,\cdot))-b(x,m^{t,\mu}(s,\cdot))]\tm(s,x)\notag\\
	&\quad+\int_0^1\int_{\br}\frac{\dd b}{\dd m}(x,m^{t,\mu}+\lambda h\tm_h(s,\cdot))(\xi)u^h(s,\xi)d\xi d\lambda \cdot m^{t,\mu}(s,x)\notag\\
	&\quad+\int_0^1\int_{\br}[\frac{\dd b}{\dd m}(x,m^{t,\mu}+\lambda h\tm_h(s,\cdot))-\frac{\dd b}{\dd m}(x,m^{t,\mu}(x,\cdot))]\tm(s,\xi)d\xi d\lambda \cdot m^{t,\mu}(s,x)\Big].
\end{align*}
Without loss of generality, we temporarily suppose that the functions $(u^h,\tm,m^{t,\mu})$ are smooth enough. Multiplying both sides of the last equation with $u^h$ and integrating with respect to $x$, from Assumptions (H1) and (H2), the average inequality, Lemma~\ref{thm1} and estimate \eqref{lem3_1}, we have for $s\in(t,T]$,
\begin{align*}
	&\frac{d}{ds}\|u^h(s,\cdot)\|_{L^2(\br)}^2+\|u^h_x(s,\cdot)\|^2_{L^2(\br)}\\
	&\le C(\gamma,L)\big(1+\|m^{t,\mu}(s,\cdot)\|^2_{W^{1,2}(\br)}\big)\Big[\|u^h(s,\cdot)\|^2_{L^2(\br)}\\
	&\qquad+h^2\|\tm_h(s,\cdot)\|^2_{L^2(\br)}\|\tm(s,\cdot)\|^2_{W^{1,2}(\br)}\Big]\\
	&\le C(\gamma,L,T,\|\mu\|_{W^{1,2}(\br)})\Big[\|u^h(s,\cdot)\|^2_{L^2(\br)}+h^2\|\tilde{\mu}\|^2_{L^2(\br)}\|\tm(s,\cdot)\|^2_{W^{1,2}(\br)}\Big].
\end{align*}
Therefore, from Gronwall's inequality and Lemma~\ref{thm4}, we have
\begin{align}\label{lem5_2}
	\sup_{t\le s\le T}\|u^h(s,\cdot)\|_{L^2(\br)}^2\le C\big(\gamma,L,T,\|\mu\|_{W^{1,2}(\br)}\big)\|\tilde{\mu}\|_{L^{2}(\br)}^4h^2.
\end{align}

\subsection{Proof of Proposition~\ref{lem7}}\label{pf_lem7}
From the uniqueness result in Lemma~\ref{thm4}, we only need to check that the function
\begin{align*}
	u^{t,\mu,\tilde{\mu}}(s,x):=\int_{\brd} k^{t,\mu}(s,x,y)\tilde{\mu}(y)dy,\quad (s,x)\in [t,T]\times\brd,
\end{align*}
satisfies equation \eqref{tm}. Actually, from \eqref{k}, we have for $(s,x)\in(t,T]\times\brd$,
\begin{align*}
	&\frac{\dd u^{t,\mu,\tilde{\mu}}}{\dd s}(s,x)=\int_{\brd} \frac{\dd k^{t,\mu}}{\dd s}(s,x,y)\tilde{\mu}(y)dy\\
	&=\int_{\brd} \sum_{i,j=1}^d\frac{\dd^2}{\dd x_i\dd x_j}\Big[a_{ij}(x,m^{t,\mu}(s,\cdot))k^{t,\mu}(s,x,y)\tilde{\mu}(y)\\
	&\qquad\qquad+\int_{\brd}\frac{\dd a_{ij}}{\dd m}(x,m^{t,\mu}(s,\cdot))(\xi)k^{t,\mu}(s,\xi,y)\tilde{\mu}(y)d\xi \cdot m^{t,\mu}(s,x)\Big] dy\\
	&\qquad-\sum_{i=1}^d\frac{\dd}{\dd x_i}\Big[b_i(x,m^{t,\mu}(s,\cdot))k^{t,\mu}(s,x,y)\tilde{\mu}(y)\\
	&\qquad\qquad+\int_{\brd}\frac{\dd b_i}{\dd m}(x,m^{t,\mu}(s,\cdot))(\xi)k^{t,\mu}(s,\xi,y)\tilde{\mu}(y)d\xi \cdot m^{t,\mu}(s,x)\Big] dy\\
	&=\sum_{i,j=1}^d\frac{\dd^2}{\dd x_i\dd x_j}\Big[a_{ij}(x,m^{t,\mu}(s,\cdot))\int_{\brd} k^{t,\mu}(s,x,y)\tilde{\mu}(y)dy\\
	&\qquad\qquad+\int_{\brd}\frac{\dd a_{ij}}{\dd m}(x,m^{t,\mu}(s,\cdot))(\xi)\int_{\brd} k^{t,\mu}(s,\xi,y)\tilde{\mu}(y)dyd\xi \cdot m^{t,\mu}(s,x)\Big]\\
	&\quad-\sum_{i=1}^d\frac{\dd}{\dd x_i}\Big[b_i(x,m^{t,\mu}(s,\cdot))\int_{\brd} k^{t,\mu}(s,x,y)\tilde{\mu}(y)dy\\
	&\qquad\qquad+\int_{\brd}\frac{\dd b_i}{\dd m}(x,m^{t,\mu}(s,\cdot))(\xi)\int_{\brd} k^{t,\mu}(s,\xi,y)\tilde{\mu}(y)dyd\xi \cdot m^{t,\mu}(s,x)\Big]\\
	&=\sum_{i,j=1}^d\frac{\dd^2}{\dd x_i\dd x_j}\Big[a_{ij}(x,m^{t,\mu}(s,\cdot))u^{t,\mu,\tilde{\mu}}(s,x)\\
	&\qquad\qquad+\int_{\brd}\frac{\dd a_{ij}}{\dd m}(x,m^{t,\mu}(s,\cdot))(\xi)u^{t,\mu,\tilde{\mu}}(s,\xi)d\xi \cdot m^{t,\mu}(s,x)\Big]\\
	&\quad-\sum_{i=1}^d\frac{\dd}{\dd x_i}\Big[b_i(x,m^{t,\mu}(s,\cdot))u^{t,\mu,\tilde{\mu}}(s,x)\\
	&\qquad\qquad+\int_{\brd}\frac{\dd b_i}{\dd m}(x,m^{t,\mu}(s,\cdot))(\xi)u^{t,\mu,\tilde{\mu}}(s,\xi)d\xi \cdot m^{t,\mu}(s,x)\Big].
\end{align*}
And from the definition of the function $\delta$, we have
\begin{align*}
	u^{t,\mu,\tilde{\mu}}(t,x)=\int_{\brd}\delta(x-y)\tilde{\mu}(y)dy=\tilde{\mu}(x),\quad x\in\brd.
\end{align*}
The proof is complete.

\subsection{Proof of Lemma~\ref{lem9}}\label{pf_lem9}
We continue to restrict ourselves within the one-dimensional case $d=1$. For notational convenience, we write $(f,g)$ instead of $(f^{t,\mu},g^{t,\mu})$ when there is no ambiguity. We first prove \eqref{lem9_1}. For notational convenience, we set for $(s,x,y)\in[t,T]\times\br\times\br$,
\begin{align*}
	l(s,x,y):=&\frac{\dd^2}{\dd x^2}\Big[\int_{\br}\frac{\dd a}{\dd m}(m^{t,\mu}(s,\cdot))(\xi)f(s,\xi,y)d\xi \cdot m^{t,\mu}(s,x)\Big]\\
	&-\frac{\dd^2}{\dd x^2}\Big[\int_{\br}\frac{\dd b}{\dd m}(m^{t,\mu}(s,\cdot))(\xi)f(s,\xi,y)d\xi \cdot m^{t,\mu}(s,x)\Big].
\end{align*}
We have already proved that
\begin{align}\label{lem9_1.5}
	\|l\|_{L^2([t,T];L^2(\br^2))}\le C(\gamma,L,T,\|\mu\|_{W^{1,2}(\br)}).
\end{align}
Without loss of generality, we temporarily suppose that $g$ is smooth enough. Multiplying both sides of equation \eqref{g} with $g$ and integrating with respect to $x$ and $y$, from Assumptions (H1) and (H2), the average inequality and Lemma~\ref{thm1}, we have for $s\in(t,T]$,
\begin{align*}
	&\frac{d}{ds}\|g(s,\cdot,\cdot)\|^2_{L^2(\br^2)}+\|g_x(s,\cdot,\cdot)\|^2_{L^2(\br^2)}\\
	&\qquad\le C(\gamma,L,T)\big[(1+\|m^{t,\mu}(s,\cdot)\|^2_{W^{1,2}(\br)})\|g(s,\cdot,\cdot)\|^2_{L^2(\br^2)}+\|l(s,\cdot,\cdot)\|^2_{L^2(\br^2)}\big]\\
	&\qquad\le C(\gamma,L,T,\|\mu\|_{W^{1,2}(\br)})\big[\|g(s,\cdot,\cdot)\|^2_{L^2(\br^2)}+\|l(s,\cdot,\cdot)\|^2_{L^2(\br^2)}\big].
\end{align*}
Therefore, from Gronwall's inequality and estimate \eqref{lem9_1.5}, we have
\begin{equation}\label{lem9_2}
	\begin{split}
		\sup_{s\in[t,T]}\|g(s,\cdot,\cdot)\|^2_{L^{2}(\br^2)}+\int_t^T\|g_{x}(s,\cdot,\cdot)\|^2_{L^2(\br^2)}ds\le C\big(\gamma,L,T,\|\mu\|_{W^{1,2}(\br)}\big).
	\end{split}
\end{equation}
Multiplying both sides of equation \eqref{g} with $a(s,m^{t,\mu}(s,\cdot))^{-1}g_s(s,x,y)$ and integrating with respect to $x$ and $y$, from Assumptions (H1) and (H2), the average inequality and estimate \eqref{lem9_2}, we have for $s\in(t,T]$,
\begin{align*}
	&\frac{d}{ds}\|g_x(s,\cdot,\cdot)\|^2_{L^2(\br^2)}+\|g_s(s,\cdot,\cdot)\|^2_{L^2(\br^2)}\\
	&\le C(\gamma,L,T)\big[\|g_x(s,\cdot,\cdot)\|^2_{L^2(\br^2)}+(1+\|m^{t,\mu}(s,\cdot)\|^2_{W^{2,2}(\br)})\|g(s,\cdot,\cdot)\|^2_{L^2(\br^2)}\\
	&\qquad+\|l(s,\cdot,\cdot)\|^2_{L^2(\br^2)}\big]\\
	&\le C(\gamma,L,T,\|\mu\|_{W^{1,2}(\br)})\big[\|g_x(s,\cdot,\cdot)\|^2_{L^2(\br^2)}+1+\|m^{t,\mu}(s,\cdot)\|^2_{W^{2,2}(\br)}\\
	&\qquad+\|l(s,\cdot,\cdot)\|^2_{L^2(\br^2)}\big].
\end{align*}
Therefore, from Gronwall's inequality, Lemma~\ref{thm1} and estimate \eqref{lem9_1.5}, we have
\begin{equation}\label{lem9_3}
	\begin{split}
		\sup_{s\in[t,T]}\|g_x(s,\cdot,\cdot)\|^2_{L^{2}(\br^2)}+\int_t^T\|g_{s}(s,\cdot,\cdot)\|^2_{L^2(\br^2)}ds\le C\big(\gamma,L,T,\|\mu\|_{W^{1,2}(\br)}\big).
	\end{split}
\end{equation}

We now prove \eqref{lem11_1}. We set for $(s,x,y)\in[t,T]\times\br\times\br$,
\begin{align*}
	&\Delta f:=f^{t,\mu'}-f^{t,\mu},\  \Delta g :=g^{t,\mu'}-g^{t,\mu},\quad \Delta m :=m^{t,\mu'}-m^{t,\mu},\  \Delta \mu:=\mu'-\mu,\\
	&\Delta a(s,x):=a(x,m^{t,\mu'}(s,\cdot))-a(x,m^{t,\mu}(s,\cdot)),\\
	&\Delta b(s,x):=b(x,m^{t,\mu'}(s,\cdot))-b(x,m^{t,\mu}(s,\cdot)).
\end{align*}
From equation \eqref{g}, we know that $\Delta g(t,x,y)=0$ for $(x,y)\in\br^2$, and for $(s,x,y)\in(t,T]\times\br\times\br$,
\begin{align}
		&\frac{\dd \Delta g}{\dd s}(s,x,y)-\frac{\dd^2}{\dd x^2}\Big[a(x,m^{t,\mu'}(s,\cdot))\Delta g(s,x,y)+\Delta a(s,x)g^{t,\mu}(s,x,y)\label{lem11_2}\\
		&\qquad+\int_{\br}\frac{\dd a}{\dd m}(x,m^{t,\mu'}(s,\cdot))(\xi)\Delta g(s,\xi,y)d\xi \cdot m^{t,\mu'}(s,x)\notag\\
		&\qquad+\int_{\br}\frac{\dd \Delta a}{\dd m} (s,x)(\xi)g^{t,\mu}(s,\xi,y)d\xi \cdot m^{t,\mu'}(s,x)\notag\\
		&\qquad+\int_{\br}\frac{\dd a}{\dd m}(x,m^{t,\mu}(s,\cdot))(\xi)g^{t,\mu}(s,\xi,y)d\xi \cdot \Delta m(s,x) \Big]\notag\\
		&\qquad+\frac{\dd}{\dd x}\Big[b(x,m^{t,\mu'}(s,\cdot))\Delta g(s,x,y)+\Delta b(s,x)g^{t,\mu}(s,x,y)\notag\\
		&\qquad+\int_{\br}\frac{\dd  b}{\dd m}(x,m^{t,\mu'}(s,\cdot))(\xi)\Delta g(s,\xi,y)d\xi \cdot m^{t,\mu'}(s,x)\notag\\
		&\qquad+\int_{\br}\frac{\dd \Delta b}{\dd m}(s,x)(\xi)g^{t,\mu}(s,\xi,y)d\xi \cdot m^{t,\mu'}(s,x)\notag\\
		&\qquad+\int_{\br}\frac{\dd b}{\dd m}(x,m^{t,\mu}(s,\cdot))(\xi)g^{t,\mu}(s,\xi,y)d\xi \cdot \Delta m(s,x) \Big]=\Delta l(s,x,y),\notag
\end{align}
where for $(s,x,y)\in(t,T]\times\br\times\br$,
\begin{align*}
	\Delta l(s,x,y):=&\frac{\dd^2}{\dd x^2}\Big[\int_{\br} \frac{\dd a}{\dd m}(x,m^{t,\mu'}(s,\cdot))(\xi)\Delta f(s,\xi,y) d\xi \cdot m^{t,\mu'}(s,x)\\
	&\qquad+\int_{\br} \frac{\dd \Delta a}{\dd m}(s,x)(\xi) f^{t,\mu}(s,\xi,y) d\xi\cdot m^{t,\mu'}(s,x)\\
	&\qquad+\int_{\br} \frac{\dd a}{\dd m}(x,m^{t,\mu}(s,\cdot))(\xi) f^{t,\mu}(s,\xi,y) d\xi \cdot \Delta m(s,x)\Big]\\
	&+\frac{\dd}{\dd x}\Big[\int_{\br} \frac{\dd b}{\dd m}(x,m^{t,\mu'}(s,\cdot))(\xi)\Delta f(s,\xi,y) d\xi \cdot m^{t,\mu'}(s,x)\\
	&\qquad+\int_{\br} \frac{\dd \Delta b}{\dd m}(s,x)(\xi) f^{t,\mu}(s,\xi,y) d\xi \cdot m^{t,\mu'}(s,x)\\
	&\qquad+\int_{\br} \frac{\dd b}{\dd m}(x,m^{t,\mu}(s,\cdot))(\xi) f^{t,\mu}(s,\xi,y) d\xi \cdot \Delta m(s,x)\Big].
\end{align*}
From Assumptions (H2) and (H3), Proposition~\ref{lemma6} and Lemma~\ref{thm1}, we have
\begin{equation}\label{lem11_5}
	\begin{split}
		&\|\Delta l\|^2_{L^2([t,T];L^2(\br^2))}\\
		&\le C(\gamma,L,T,\|\mu\|_{W^{1,2}(\br)})\int_t^T\Big[\|\Delta \mu\|^2_{L^2(\br)}+\|\Delta m(s,\cdot)\|^2_{L^2(\br)}\Big]\\
		&\qquad\times \|m^{t,\mu'}(s,\cdot)\|^2_{W^{2,2}(\br)}+\|\Delta m(s,\cdot)\|_{W^{2,2}(\br)}^2ds\\
		&\le C(\gamma,L,T,\|\mu\|_{W^{1,2}(\br)})\Big[\|\Delta \mu\|^2_{L^2(\br)}\int_t^T\|m^{t,\mu'}(s,\cdot)\|^2_{W^{2,2}(\br)}ds\\
		&\qquad+\int_t^T\|\Delta m(s,\cdot)\|_{W^{2,2}(\br)}^2ds\Big]\\
		&\le C(\gamma,L,T,\|\mu\|_{W^{1,2}(\br)},\|\mu'\|_{W^{1,2}(\br)})\|\Delta\mu\|^2_{W^{1,2}(\br)}.
	\end{split}
\end{equation}
We temporarily suppose that $\Delta g$ is smooth enough. Multiplying both sides of equation \eqref{lem11_2} with $\Delta g$ and integrating them with respect to $x$ and $y$, from Assumptions (H1) and (H2), the average inequality, Lemma~\ref{thm1}, Proposition~\ref{lemma6} and estimates \eqref{lem9_2} and \eqref{lem9_3}, we have for $t<s\le T$,
\begin{align*}
	&\frac{d}{ds}\|\Delta g(s,\cdot,\cdot)\|^2_{L^2(\br^2)}+\|\Delta g_x(s,\cdot,\cdot)\|^2_{L^2(\br^2)}\\
	&\le C(\gamma,L)\Big[ \|\Delta l(s,\cdot,\cdot)\|^2_{L^2(\br)}+\big(1+\|m^{t,\mu'}(s,\cdot)\|^2_{W^{1,2}(\br)}\big)\\
	&\qquad\times\big(\|\Delta g(s,\cdot,\cdot)\|^2_{L^2(\br^2)}+\|g^{t,\mu}(s,\cdot,\cdot)\|^2_{W^{1,2}(\br^2)}\|\Delta m(s,\cdot)\|^2_{W^{1,2}(\br)}\big)\Big]\\
	&\le C(\gamma,L,T,\|\mu\|_{W^{1,2}(\br)},\|\mu'\|_{W^{1,2}(\br)})\Big[\|\Delta\mu\|^2_{W^{1,2}(\br)}+\|\Delta l(s,\cdot,\cdot)\|^2_{L^2(\br)}\\
	&\qquad+\|\Delta g(s,\cdot,\cdot)\|^2_{L^2(\br^2)}\Big].
\end{align*}
Therefore, from Gronwall's inequality and estimate \eqref{lem11_5}, we have
\begin{align*}
	\sup_{t\le s\le T}\|\Delta g(s,\cdot,\cdot)\|^2_{L^2(\br^2)}&\le C\big(\gamma,L,T,\|\mu\|_{W^{1,2}(\br)},\|\mu'\|_{W^{1,2}(\br)}\big)\|\Delta\mu\|^2_{W^{1,2}(\br)}.
\end{align*}

\section{Proof of Statements in Section 4}\label{pf_4}
\subsection{Proof of Lemma~\ref{lem2.3}}\label{pf_lem2.3}
We continue to restrict ourselves within the one-dimensional case $d=1$. For $h\in(0,1]$ and $(s,x)\in[t,T]\times\br$, we set
\begin{align*}
	&\rho_s^{t,x,\mu}(\tilde{\mu},h):=Y^{t,x,\mu}_s(\tilde{\mu},h)-Y^{t,x,\mu}_s(\tilde{\mu});\\
	&u_h^{t,\mu}(\tilde{\mu})(s,x):=\tm_h^{t,\mu}(\tilde{\mu})(s,x)-\tm^{t,\mu}(\tilde{\mu})(s,x).
\end{align*}
From Lemma~\ref{lem5}, we have the estimate
\begin{equation}\label{lem2.3_2}
	\sup_{t\le s\le T}\|u_h^{t,\mu}(\tilde{\mu})(s,\cdot)\|_{L^2(\br)}\le C\big(\gamma,L,T,\|\mu\|_{W^{1,2}(\br)},\|\tilde{\mu}\|_{L^2(\br)}\big)h.
\end{equation}
From SDEs \eqref{sde_Y} and \eqref{sde_Y'}, we know that $\rho^{t,x,\mu}(\tilde{\mu},h)$ satisfies the following SDE:
\begin{align*}
	&\rho^{t,x,\mu}_s(\tilde{\mu},h)\\
	&=\int_t^s\int_0^1\Big[b_x(X^{t,x,\mu}_r+\lambda hY^{t,x,\mu}_r(\tilde{\mu},h),m^{t,\mu+h\tilde{\mu}}(r,\cdot))\cdot \rho^{t,x,\mu}_r(\tilde{\mu},h)\\
	&\quad +[b_x(X^{t,x,\mu}_r+\lambda hY^{t,x,\mu}_r(\tilde{\mu},h),m^{t,\mu+h\tilde{\mu}}(r,\cdot))-b_x(X^{t,x,\mu}_r,m^{t,\mu}(r,\cdot))]\cdot Y^{t,x,\mu}_r(\tilde{\mu}) \\
	&\quad +\int_{\br} \frac{\dd b}{\dd m}(X^{t,x,\mu}_r,m^{t,\mu}(r,\cdot)+\lambda h\tm_h^{t,\mu}(\tilde{\mu})(r,\cdot))(\xi)\cdot u_h^{t,\mu}(\tilde{\mu})(r,\xi) d\xi \\
	&\quad +\int_{\br}\big[ \frac{\dd b}{\dd m}(X^{t,x,\mu}_r,m^{t,\mu}(r,\cdot)+\lambda h\tm_h^{t,\mu}(\tilde{\mu})(r,\cdot))(\xi)\\
	&\qquad\qquad\quad- \frac{\dd b}{\dd m}(X^{t,x,\mu}_r,m^{t,\mu}(r,\cdot))(\xi)\big]\cdot \tm^{t,\mu}(\tilde{\mu})(r,\xi) d\xi\Big]d\lambda dr\\
	&\quad+\int_t^s\int_0^1\Big[\sigma_x(X^{t,x,\mu}_r+\lambda hY^{t,x,\mu}_r(\tilde{\mu},h),m^{t,\mu+h\tilde{\mu}}(r,\cdot))\cdot \rho^{t,x,\mu}_r(\tilde{\mu},h)\\
	&\quad + [\sigma_x(X^{t,x,\mu}_r+\lambda hY^{t,x,\mu}_r(\tilde{\mu},h),m^{t,\mu+h\tilde{\mu}}(r,\cdot))-\sigma_x(X^{t,x,\mu}_r,m^{t,\mu}(r,\cdot))]\cdot Y^{t,x,\mu}_r(\tilde{\mu}) \\
	&\quad +\int_{\br} \frac{\dd \sigma}{\dd m}(X^{t,x,\mu}_r,m^{t,\mu}(r,\cdot)+\lambda h\tm_h^{t,\mu}(\tilde{\mu})(r,\cdot))(\xi)\cdot u_h^{t,\mu}(\tilde{\mu})(r,\xi) d\xi \\
	&\quad +\int_{\br}\big[ \frac{\dd \sigma}{\dd m}(X^{t,x,\mu}_r,m^{t,\mu}(r,\cdot)+\lambda h\tm_h^{t,\mu}(\tilde{\mu})(r,\cdot))(\xi)\\
	&\qquad\qquad\quad- \frac{\dd \sigma}{\dd m}(X^{t,x,\mu}_r,m^{t,\mu}(r,\cdot))(\xi)\big]\cdot \tm^{t,\mu}(\tilde{\mu})(r,\xi) d\xi\Big]d\lambda dB_r,
\end{align*}
for $s\in[t,T]$. From standard arguments of SDEs, Cauchy's inequality, Assumption (H2), estimate \eqref{lem3_1}, Lemma~\ref{thm4}, and estimates \eqref{estimate_Y}, \eqref{estimate_Y'} and \eqref{lem2.3_2},  we have
\begin{align*}
	&\e[\sup_{t\le s\le T}|\rho^{t,x,\mu}_s(\tilde{\mu},h)|^2]\\
	&\le C(L,T)\Big[\e[\sup_{t\le s\le T}|Y^{t,x,\mu}_s(\tilde{\mu})|^4]^\frac{1}{2}\e[\sup_{t\le s\le T}|Y^{t,x,\mu}_s(\tilde{\mu},h)|^4]^\frac{1}{2}h^2\\
	&\qquad+\e[\sup_{t\le s\le T}|Y^{t,x,\mu}_s(\tilde{\mu})|^2]\sup_{t\le s\le T}\|\tm_h^{t,\mu}(s,\cdot)\|_{L^2(\br)}^2h^2\\
	&\qquad+\sup_{t\le s\le T}\|\tm_h^{t,\mu}(s,\cdot)\|_{L^2(\br)}^2\sup_{t\le s\le T}\|\tm^{t,\mu}(s,\cdot)\|_{L^2(\br)}^2h^2+\sup_{t\le s\le T}\|u_h^{t,\mu}(s,\cdot)\|_{L^2(\br)}\Big]\\
	&\le C(\gamma,L,T,\|\mu\|_{W^{1,2}(\br)},\|\tilde{\mu}\|_{L^2(\br)})h^2.
\end{align*}

\subsection{Proof of Proposition~\ref{lem2.5}}\label{pf_lem2.5}
We continue to restrict ourselves within the one-dimensional case $d=1$. With standard arguments of SDEs, we have for any $y\in\br$,
\begin{align*}
	\e[\sup_{t\le s\le T}|U^{t,x,\mu}_s(y)|^2]&\le C(L,T)\e\Big[\int_t^T\Big|\int_{\br} \frac{\dd b}{\dd m}(X^{t,x,\mu}_s,m^{t,\mu}(s,\cdot))(\xi)k^{t,\mu}(s,\xi,y) d\xi\Big|^2\\
	&\quad\qquad\qquad +\Big|\int_{\br} \frac{\dd \sigma}{\dd m}(X^{t,x,\mu}_s,m^{t,\mu}(s,\cdot))(\xi)k^{t,\mu}(s,\xi,y) d\xi\Big|^2ds\Big].
\end{align*}
So from Assumption (H2) and Proposition~\ref{lem12}, we have \eqref{lem2.5_1}. Applying It\^o's lemma for $\int_\br |U^{t,x,\mu}(y)|^2dy$, with standard arguments of SDEs, we have
\begin{align*}
	&\e\Big[\sup_{t\le s\le T}\Big|\int_{\br}|U^{t,x,\mu}_s(y)|^2dy\Big|^p\Big]\\
	&\le C(p,L,T)\e\Big[\int_t^T \Big(\int_{\br} |\int_{\br} \frac{\dd b}{\dd m}(X^{t,x,\mu}_s,m^{t,\mu}(s,\cdot))(\xi)k^{t,\mu}(s,\xi,y) d\xi|^2dy\\
	&\quad\qquad\qquad\qquad\qquad+\int_{\br}|\int_{\br} \frac{\dd \sigma}{\dd m}(X^{t,x,\mu}_s,m^{t,\mu}(s,\cdot))(\xi)k^{t,\mu}(s,\xi,y) d\xi|^2 dy\Big)^p ds\Big].
\end{align*}
So from Assumption (H2) and Proposition~\ref{lem12}, we have \eqref{lem2.5_1'}. Now we prove \eqref{lem2.5_2}. We set for $(s,x,y)\in[t,T]\times\br\times\br$,
\begin{align*}
	&\Delta x:=x'-x,\quad \Delta \mu:=\mu'-\mu,\quad \Delta m(s,x):=m^{t,\mu'}(s,x)-m^{t,\mu}(s,x);\\
	&\Delta X_s:=X_s^{t,x',\mu'}-X_s^{t,x,\mu},\quad \Delta U_s(y):=U^{t,x',\mu'}_s(y)-U^{t,x,\mu}_s(y);\\
	&\Delta B_s(\xi):=\frac{\dd b}{\dd m}(X^{t,x',\mu'}_s,m^{t,\mu'}(s,\cdot))(\xi)-\frac{\dd b}{\dd m}(X^{t,x,\mu}_s,m^{t,\mu}(s,\cdot))(\xi);\\
	&\Delta \Sigma_s(\xi):=\frac{\dd \sigma}{\dd m}(X^{t,x',\mu'}_s,m^{t,\mu'}(s,\cdot))(\xi)-\frac{\dd \sigma}{\dd m}(X^{t,x,\mu}_s,m^{t,\mu}(s,\cdot))(\xi).
\end{align*}
With standard arguments of SDEs and Assumption (H2), we have
\begin{equation}\label{lem2.5_3}
	\begin{split}
		&\e[\int_{\br}\sup_{t\le s\le T}|\Delta U_s(y)|^2dy]\\
		&\le C(L,T)\e\Big[\int_t^T (\int_{\br}|U_s^{t,x,\mu}|^2dy)(|\Delta X_s|^2+\|\Delta m(s,\cdot)\|^2_{L^2(\br)}) \\
		&\quad +\int_{\br} \Big|\int_{\br} \frac{\dd b}{\dd m}(X_s^{t,x',\mu'},m^{t,\mu'}(s,\cdot))(\xi)[k^{t,\mu'}(s,\xi,y)-k^{t,\mu}(s,\xi,y)] d\xi\Big|^2 dy\\
		&\quad +\int_{\br} \Big|\int_{\br} \frac{\dd \sigma}{\dd m}(X_s^{t,x',\mu'},m^{t,\mu'}(s,\cdot))(\xi)[k^{t,\mu'}(s,\xi,y)-k^{t,\mu}(s,\xi,y)] d\xi\Big|^2 dy\\
		&\quad + \int_{\br} \Big|\int_{\br} \Delta B_s(\xi) k^{t,\mu}(s,\xi,y) d\xi\Big|^2 dy \\
		&\quad+ \int_{\br} \Big|\int_{\br} \Delta\Sigma_s(\xi) k^{t,\mu}(s,\xi,y) d\xi\Big|^2 dyds\Big].
	\end{split}
\end{equation}
From Cauchy's inequality, Proposition~\ref{lemma6} and estimates \eqref{deltaX}, \eqref{lem2.5_1} and \eqref{lem2.5_1'}, we have
\begin{equation}\label{lem2.5_4}
	\begin{split}
		&\e\Big[\int_t^T (\int_{\br}|U_s^{t,x,\mu}|^2dy)(|\Delta X_s|^2+\|\Delta m(s,\cdot)\|^2_{L^2(\br)})ds\Big]\\
		&\le C(T)\Big(\e[\sup_{t\le s\le T}|\int_{\br}|U_s^{t,x,\mu}|^2dy|^{2}]^{\frac{1}{2}}\e[\sup_{t\le s\le T}|\Delta X_s|^{4}]^{\frac{1}{2}}\\
		&\qquad\qquad+\e[\sup_{t\le s\le T}\int_{\br}|U_s^{t,x,\mu}|^2dy]\sup_{t\le s\le T}\|\Delta m(s,\cdot)\|^2_{L^2(\br)}\Big)\\
		&\le C(\gamma,L,T,\|\mu\|_{W^{1,2}(\br)})(|\Delta x|^2+\|\Delta\mu\|^2_{L^2(\br)}).
	\end{split}
\end{equation}
From Assumption (H2) and Proposition~\ref{lem12}, we have for any $s\in[t,T]$ and $c=b,\sigma$,
\begin{equation}\label{lem2.5_5}
	\begin{split}
		&\int_{\br} \Big|\int_{\br} \frac{\dd c}{\dd m}(X_s^{t,x',\mu'},m^{t,\mu'}(s,\cdot))(\xi)[k^{t,\mu'}(s,\xi,y)-k^{t,\mu}(s,\xi,y)] d\xi\Big|^2 dy\\
		&\qquad\le C(\gamma,L,T,\|\mu\|_{W^{1,2}(\br)},\|\mu'\|_{W^{1,2}(\br)})\|\Delta\mu\|^2_{W^{1,2}(\br)}.
	\end{split}
\end{equation}
From Assumption (H2), estimate \eqref{deltaX} and Propositions~\ref{lemma6} and \ref{lem12}, we have for any $s\in[t,T]$,
\begin{equation}\label{lem2.5_6}
	\begin{split}
		&\e\Big[ \int_{\br} \Big|\int_{\br} \Delta B_s(\xi) k^{t,\mu}(s,\xi,y) d\xi\Big|^2 dy+ \int_{\br} \Big|\int_{\br} \Delta\Sigma_s(\xi) k^{t,\mu}(s,\xi,y) d\xi\Big|^2 dy\Big]\\
		&\qquad\le C(\gamma,L,T,\|\mu\|_{W^{1,2}(\br)})\e\Big[\int_{\br} |\Delta B_s(\xi)|^2+|\Delta \Sigma_s(\xi)|^2d\xi\Big]\\
		&\qquad\le C(\gamma,L,T,\|\mu\|_{W^{1,2}(\br)})(\e[\sup_{t\le s\le T}|\Delta X_s|^2]+\sup_{t\le s\le T}\|\Delta m(s,\cdot)\|^2_{L^2(\br)})\\
		&\qquad\le C(\gamma,L,T,\|\mu\|_{W^{1,2}(\br)})(|\Delta x|^2+\|\Delta\mu\|^2_{L^2(\br)}).
	\end{split}
\end{equation}
Plugging \eqref{lem2.5_4}-\eqref{lem2.5_6} into \eqref{lem2.5_3}, we have  \eqref{lem2.5_2}.

\section{Proof of Proposition~\ref{thm3.1}}\label{pf_thm3.1}
We continue to restrict ourselves within the one-dimensional case $d=1$. Equalities \eqref{V_x}-\eqref{V_mu} are direct consequences of Sections~\ref{PDE2} and \ref{SDE}. The boundedness of $(V,\frac{\dd V}{\dd x},\frac{\dd^2 V}{\dd x^2})$ follows from Assumption (H4) and Proposition~\ref{lem2.0}. From Assumption (H4) and Propositions~\ref{lem12} and \ref{lem2.5}, we have
\begin{align*}
	&\|\frac{\dd V}{\dd \mu}(t,x,\mu)(\cdot)\|_{L^2(\br)}^2\\
	&\qquad\le C(L)\e\Big[\int_{\br} |U_T^{t,x,\mu}(y)|^2dy+\int_{\br}\Big|\int_{\br}\frac{\dd \Phi}{\dd m}(X_T^{t,x,\mu},m_T^{t,\mu})(\xi)k^{t,\mu}(T,\xi,y)d\xi\Big|^2 dy\Big]\\
	&\qquad\le C(\gamma,L,T,\|\mu\|_{W^{1,2}(\br)}).
\end{align*}
The Lipschitz-continuity of $(V,\frac{\dd V}{\dd x},\frac{\dd^2 V}{\dd x^2})$ with respect to $(x,\mu)$ follows from Assumption (H4), Propositions~\ref{lemma6} and \ref{lem2.0} and estimate \eqref{deltaX}. And from Assumption (H4), Cauchy's inequality, estimate \eqref{deltaX} and Propositions~\ref{lemma6}, \ref{lem12} and \ref{lem2.5}, we have
\begin{align*}
	&\|\frac{\dd V}{\dd \mu}(t,x',\mu')(\cdot)-\frac{\dd V}{\dd \mu}(t,x,\mu)(\cdot)\|_{L^2(\br)}^2\\
	&\le C(L)\Big\{\e\Big[\int_{\br} |U_T^{t,x',\mu'}-U_T^{t,x,\mu}|^2 dy\Big]+\e[|X_T^{t,x',\mu'}-X_T^{t,x,\mu}|^4]^{\frac{1}{2}}\e\Big[\Big|\int_{\br} |U_T^{t,x,\mu}|^2 dy\Big|^2\Big]^{\frac{1}{2}}\\
	&\quad+\|m_T^{t,\mu'}-m_T^{t,\mu}\|^2\e\Big[\int_{\br} |U_T^{t,x,\mu}|^2 dy\Big]\\
	&\quad+\e\Big[\int_{\br} \Big|\int_{\br}\frac{\dd \Phi}{\dd m}(X_T^{t,x',\mu'},m_T^{t,\mu'})(\xi)[k^{t,\mu'}(T,\xi,y)-k^{t,\mu}(T,\xi,y)]d\xi\Big|^2 dy\\
	&\quad+\int_{\br} \Big|\int_{\br}[\frac{\dd \Phi}{\dd m}(X_T^{t,x',\mu'},m_T^{t,\mu'})(\xi)-\frac{\dd \Phi}{\dd m}(X_T^{t,x,\mu},m_T^{t,\mu})(\xi)]k^{t,\mu}(T,\xi,y)d\xi\Big|^2 dy\Big]\Big\}\\
	&\le C(\gamma,L,T,\|\mu\|_{W^{1,2}(\br)},\|\mu'\|_{W^{1,2}(\br)})(|x'-x|^2+\|\mu'-\mu\|^2_{W^{1,2}(\br)}).
\end{align*}

It remains to prove the $\frac{1}{2}$-H\"older-continuity with respect to $t$. Using the time-shifted Brownian motion $B_s^t:=B_{t+s}-B_t$, $s\geq 0$, we see that $X^{t,x,\mu}$ solve the following SDE
\begin{equation*}
	X_{s+t}^{t,x,\mu}=x+\int_0^s b(X_{r+t}^{t,x,\mu},m_{r+t}^{t,\mu})dr+\int_0^s \sigma(X_{r+t}^{t,x,\mu},m_{r+t}^{t,\mu})dB_r^t,\quad s\in[0,T-t].
\end{equation*}
Consequently, $(m^{t,\mu}_{\cdot+t},X^{t,x,\mu}_{\cdot+t})$ and $(m^{0,\mu},X^{0,x,\mu})$ are solutions of the same system of PDE-SDE, only driven by different Brownian motions, $B^t$ and $B$, respectively. It follows that the laws of $(m^{t,\mu}_{\cdot+t},X^{t,x,\mu}_{\cdot+t})$ and $(m^{0,\mu},X^{0,x,\mu})$ coincide, and hence,
\begin{equation}\label{thm3.1_4}
	V(t,x,\mu)=\e[\Phi(X_T^{t,x,\mu},m^{t,\mu}(T,\cdot))]=\e[\Phi(X_{T-t}^{0,x,\mu},m^{0,\mu}(T-t,\cdot))].
\end{equation}
For two different initial times $0\le t\le t' \le T$,
\begin{align*}
	X_{T-t'}^{0,x,\mu}-X_{T-t}^{0,x,\mu}=-\int_{T-t'}^{T-t}b(X_{s}^{0,x,\mu},m_{s}^{0,x,\mu})ds-\int_{T-t'}^{T-t}\sigma(X_{s}^{0,x,\mu},m_{s}^{0,x,\mu})dB_s.
\end{align*}
With Assumption (H2) and standard argument of SDEs, we have for any $p\geq 2$,
\begin{align}\label{thm3.1_1}
	\e[|X_{T-t'}^{0,x,\mu}-X_{T-t}^{0,x,\mu}|^p]\le C(p,L,T)|t'-t|^{\frac{p}{2}}.
\end{align}
From Assumption (H4), Lemma~\ref{thm1}, \eqref{thm3.1_4} and \eqref{thm3.1_1}, we prove the $\frac{1}{2}$-H\"older-continuity of $V$ with respect to $t$. The proof of the remaining estimate of the $\frac{1}{2}$-H\"older-continuity with respect to $t$ for the derivatives of $V$ is carried out by using the same kind of argument. From Proposition~\ref{lem2.0}, we have for any $p\geq 2$ and $0\le t\le t' \le T$,
\begin{align}\label{thm3.1_3}
	\e[|\dd_xX_T^{t',x,\mu}-\dd_xX_T^{t,x,\mu}|^p+|\dd_x^2X_T^{t',x,\mu}-\dd_x^2X_T^{t,x,\mu}|^p]\le C(L,T)|t'-t|^{\frac{p}{2}}.
\end{align}
The $\frac{1}{2}$-H\"older continuity of $(V,\frac{\dd V}{\dd x},\frac{\dd^2 V}{\dd x^2})$ with respect to $t$ then follows from Assumption (H4), estimates \eqref{thm3.1_1} and \eqref{thm3.1_3}, Lemma~\ref{thm1},  and Proposition~\ref{lem2.0}. From equation \eqref{sde_U} and Propositions~\ref{lem12} and \ref{lem2.5}, we have for $0\le t\le t' \le T$,
\begin{equation}\label{thm3.1_2}
	\begin{split}
		&\e\Big[\int_{\br}|U_T^{t',x,\mu}(y)-U_T^{t,x,\mu}(y)|^2dy\Big]\\
		&\le C(L,T)\e\Big[\int_{T-t'}^{T-t}\int_{\br} |U_s^{0,x,\mu}(y)|^2\\
		&\qquad+\Big|\int_{\br} \frac{\dd \sigma}{\dd m}(X_s^{0,x,\mu},m^{0,\mu}(s,\cdot))(\xi)k^{0,\mu}(s,\xi,y)d\xi\Big|^2\\
		&\qquad+\Big|\int_{\br} \frac{\dd b}{\dd m}(X_s^{0,x,\mu},m^{0,\mu}(s,\cdot))(\xi)k^{0,\mu}(s,\xi,y)d\xi\Big|^2 dyds\Big]\\
		&\le C(\gamma,L,T,\|\mu\|_{W^{1,2}(\br)})|t'-t|.
	\end{split}
\end{equation}
Therefore, from Assumption (H4), Cauchy's inequality, Lemma~\ref{thm1}, Propositions~\ref{lem12} and \ref{lem2.5},  and estimates \eqref{thm3.1_1} and \eqref{thm3.1_2}, we have
\begin{align*}
	&\|\frac{\dd V}{\dd \mu}(t',x,\mu)(\cdot)-\frac{\dd V}{\dd \mu}(t,x,\mu)(\cdot)\|_{L^2(\br)}^2\\
	&\le C(L)\Big\{\e\Big[\int_{\br} |U_T^{t',x,\mu}-U_T^{t,x,\mu}|^2 dy\Big]+\e[|X_T^{t',x,\mu}-X_T^{t,x,\mu}|^4]^{\frac{1}{2}}\e\Big[\Big|\int_{\br} |U_T^{t,x,\mu}|^2 dy\Big|^2\Big]^{\frac{1}{2}}\\
	&\quad+\|m_{T-t'}^{0,\mu}-m_{T-t}^{0,\mu}\|^2\e\Big[\int_\br |U_T^{t,x,\mu}|^2 dy\Big]\\
	&\quad+\e\Big[\int_{\br} \Big|\int_{\br}\frac{\dd \Phi}{\dd m}(X_{T-t'}^{0,x,\mu},m_{T-t'}^{0,\mu})(\xi)[k^{0,\mu}(T-t',\xi,y)-k^{0,\mu}(T-t,\xi,y)]d\xi\Big|^2 dy\\
	&\quad+\int_{\br} \Big|\int_{\br}[\frac{\dd \Phi}{\dd m}(X_T^{t',x,\mu},m_{T}^{t',\mu})(\xi)-\frac{\dd \Phi}{\dd m}(X_T^{t,x,\mu},m_{T}^{t,\mu})(\xi)]k^{t,\mu}(T,\xi,y)d\xi\Big|^2 dy\Big]\Big\}\\
	&\le C(\gamma,L,T,\|\mu\|_{W^{1,2}(\br)})|t'-t|.
\end{align*}

\section{Proof of Lemma~\ref{forall}}\label{pf_forall}
We continue to restrict ourselves within the one-dimensional case $d=1$. Let $0\le t\le s\le T$, $x\in\br$ and $\mu\in W^{3,2}(\br)$. Note from Proposition~\ref{thm1'} that $m^{t,\mu}(s,\cdot)\in W^{3,2}(\br)$. We first consider $f(s,x,m^{t,\mu}(s,\cdot))-f(t,x,\mu)$. We put $t_i^n:=t+i(s-t)2^{-n}$ for $0\le i\le 2^n$ and $n\geq 1$. Then
\begin{align*}
	f(s,x,m^{t,\mu}(s,\cdot))-f(t,x,\mu)=\sum_{i=0}^{2^n-1}[f(t_{i+1}^n,x,m^{t,\mu}(t_{i+1}^n,\cdot))-f(t_i^n,x,m^{t,\mu}(t_{i}^n,\cdot))].
\end{align*}
Actually,
\begin{align*}
	&f(t_{i+1}^n,x,m^{t,\mu}(t_{i+1}^n,\cdot))-f(t_i^n,x,m^{t,\mu}(t_{i}^n,\cdot))\\
	&=\int_{t_i^n}^{t_{i+1}^n}\frac{\dd f}{\dd r}(r,x,m^{t,\mu}(t^n_{i+1},\cdot))dr\\
	&\quad+\int_{\br} \frac{\dd f}{\dd \mu}(t_i^n,x,m^{t,\mu}(t_{i}^n,\cdot))(\xi)[m^{t,\mu}(t_{i+1}^n,\xi)-m^{t,\mu}(t_{i}^n,\xi)]d\xi\\
	&\quad +\int_{\br} \int_0^1\Big[\frac{\dd f}{\dd \mu}(t_i^n,x,m^{t,\mu}(t_{i}^n,\cdot)+\lambda[m^{t,\mu}(t_{i+1}^n,\cdot)-m^{t,\mu}(t_{i}^n,\cdot)])(\xi)\\
	&\qquad\qquad\quad-\frac{\dd f}{\dd \mu}(t^n_i,x,m^{t,\mu}(t_{i}^n,\cdot))(\xi)\Big]\cdot[m^{t,\mu}(t_{i+1}^n,\xi)-m^{t,\mu}(t_{i}^n,\xi)]d\lambda d\xi\\
	&=\int_{t_i^n}^{t_{i+1}^n}\frac{\dd f}{\dd r}(r,x,m^{t,\mu}(t^n_{i+1},\cdot))dr+\int_{t_i^n}^{t_{i+1}^n}\int_{\br} \frac{\dd f}{\dd \mu}(t^n_i,x,m^{t,\mu}(t_{i}^n,\cdot))(\xi)\frac{\dd m^{t,\mu}}{\dd r}(r,\xi)d\xi dr\\
	&\quad +\int_{t_i^n}^{t_{i+1}^n}\int_{\br}\int_0^1 \Big[\frac{\dd f}{\dd \mu}(t^n_i,x,m^{t,\mu}(t_{i}^n,\cdot)+\lambda[m^{t,\mu}(t_{i+1}^n,\cdot)-m^{t,\mu}(t_{i}^n,\cdot)])(\xi)\\
	&\qquad\qquad\qquad\qquad-\frac{\dd f}{\dd \mu}(t_i^n,x,m^{t,\mu}(t_{i}^n,\cdot))(\xi)\Big]\cdot\frac{\dd m^{t,\mu}}{\dd r}(r,\xi)d\lambda d\xi dr.
\end{align*}
From Lemma~\ref{thm1}, Proposition~\ref{thm1'} and the fact that $f\in C_b^{1,2,1}$, we have
\begin{align*}
	&\Big|\int_{t_i^n}^{t_{i+1}^n}\frac{\dd f}{\dd r}(r,x,m^{t,\mu}(t^n_{i+1},\cdot))-\frac{\dd f}{\dd r}(r,x,m^{t,\mu}(r,\cdot))dr\Big|\\
	&\qquad\le C(\sup_{t^n_i\le r\le t_{i+1}^n}\|m^{t,\mu}(r,\cdot)\|_{ W^{3,2}(\br)})\int_{t_i^n}^{t_{i+1}^n}\|m^{t,\mu}(t^n_{i+1},\cdot)-m^{t,\mu}(r,\cdot)\|_{W^{2,2}(\br)}dr\\
	&\qquad\le C(\gamma,L,T,\|\mu\|_{W^{3,2}(\br)})\int_{t_i^n}^{t_{i+1}^n} |t_{i+1}^n-r|^{\frac{1}{2}} dr\le C(\gamma,L,T,\|\mu\|_{W^{3,2}(\br)})2^{-\frac{3n}{2}}.
\end{align*}
So we have
\begin{align*}
	&\Big|\sum_{i=0}^{2^n-1}\int_{t_i^n}^{t_{i+1}^n}\frac{\dd f}{\dd r}(r,x,m^{t,\mu}(t^n_{i+1},\cdot))dr-\int_{t}^{s}\frac{\dd f}{\dd r}(r,x,m^{t,\mu}(r,\cdot))dr\Big|\\
	&\qquad\le C(\gamma,L,T,\|\mu\|_{W^{3,2}(\br)})2^{-\frac{n}{2}}\stackrel{n\to+\infty}{\longrightarrow}0.
\end{align*}
From Lemma~\ref{thm1}, Proposition~\ref{thm1'} and the fact that $f\in C_b^{1,2,1}$, we have
\begin{align*}
	&\Big|\int_{t_i^n}^{t_{i+1}^n}\int_{\br} \Big[\frac{\dd f}{\dd \mu}(t_i^n,x,m^{t,\mu}(t_i^n,\cdot))(\xi) - \frac{\dd f}{\dd \mu}(r,x,m^{t,\mu}(r,\cdot))(\xi)\Big]\frac{\dd m^{t,\mu}}{\dd r}(r,\xi)  d\xi dr\Big|\\
	&\le \int_{t_i^n}^{t_{i+1}^n}\Big(\int_{\br}\Big|\frac{\dd f}{\dd \mu}(t_i^n,x,m^{t,\mu}(t_i^n,\cdot))(\xi)-\frac{\dd f}{\dd \mu}(r,x,m^{t,\mu}(r,\cdot))(\xi)\Big|^2 d\xi\Big)^{\frac{1}{2}}\\
	&\qquad\times\|\frac{\dd m^{t,\mu}}{\dd r}(r,\cdot)\|_{L^2(\br)} dr\\
	&\le  C(\sup_{t_i^n\le r\le t_{i+1}^n}\|m^{t,\mu}(r,\cdot)\|_{ W^{3,2}(\br)})\sup_{t_i^n\le r\le t_{i+1}^n}\|\frac{\dd m^{t,\mu}}{\dd r}(r,\cdot)\|_{L^2(\br)}\\
	&\qquad \times \int_{t^n_i}^{t^n_{i+1}}|r-t_i^n|^{\frac{1}{2}}+\|m^{t,\mu}(r,\cdot)-m^{t,\mu}(t_i^n,\cdot)\|_{W^{2,2}(\br)} dr\\
	&\le C(\gamma,L,T,\|\mu\|_{W^{3,2}(\br)})\int_{t^n_i}^{t^n_{i+1}}|r-t^n_i|^\frac{1}{2} dr\le C(\gamma,L,T,\|\mu\|_{W^{3,2}(\br)})2^{-\frac{3n}{2}}.
\end{align*}
So we have
\begin{align*}
	&\Big|\sum_{i=0}^{2^n-1}\int_{t_i^n}^{t_{i+1}^n}\int_{\br}  \frac{\dd f}{\dd \mu}(t_i^n,x,m^{t,\mu}(t_i^n,\cdot))(\xi)\frac{\dd m^{t,\mu}}{\dd r}(r,\xi)  d\xi dr\\
	&\qquad-\int_{t}^{s}\int_{\br}  \frac{\dd f}{\dd \mu}(r,x,m^{t,\mu}(r,\cdot))(\xi)\frac{\dd m^{t,\mu}}{\dd r}(r,\xi)  d\xi dr\Big|\\
	&\le C(\gamma,L,T,\|\mu\|_{W^{3,2}(\br)})2^{-\frac{n}{2}}\stackrel{n\to+\infty}{\longrightarrow}0.
\end{align*}
From Lemma~\ref{thm1}, Propositions~\ref{thm1'} and the fact that $f\in C_b^{1,2,1}$, we have
\begin{align*}
	&\Big|\int_{t_i^n}^{t_{i+1}^n}\int_{\br}\int_{\br} \Big[\frac{\dd f}{\dd \mu}(t_i^n,x,m^{t,\mu}(t_{i}^n,\cdot)+\lambda[m^{t,\mu}(t_{i+1}^n,\cdot)-m^{t,\mu}(t_{i}^n,\cdot)])(\xi)\\
	&\qquad-\frac{\dd f}{\dd \mu}(t_i^n,x,m^{t,\mu}(t_{i}^n,\cdot))(\xi)\Big]\cdot\frac{\dd m^{t,\mu}}{\dd r}(r,\xi)d\lambda d\xi dr\Big|\\
	&\le C(\sup_{t_i^n\le r\le t_{i+1}^n}\|m^{t,\mu}(r,\cdot)\|_{ W^{3,2}(\br)})\|m^{t,\mu}(t^n_{i+1},\cdot)-m^{t,\mu}(t^n_i,\cdot)\|_{W^{2,2}(\br)}\\
	&\qquad\times\int_{t_i^n}^{t_{i+1}^n}\|\frac{\dd m^{t,\mu}}{\dd r}(r,\cdot)\|_{L^2(\br)} dr\\
	&\le C(\gamma,L,T,\|\mu\|_{W^{3,2}(\br)})2^{-\frac{n}{2}}\int_{t^n_i}^{t^n_{i+1}}\|\frac{\dd m^{t,\mu}}{\dd r}(r,\cdot)\|_{L^2(\br)} dr.
\end{align*}
So from Lemma~\ref{thm1}, we have
\begin{align*}
	&\Big|\sum_{i=0}^{2^n-1}\int_{t_i^n}^{t_{i+1}^n}\int_{\br}\int_{\br} \Big[\frac{\dd f}{\dd \mu}(t_i^n,x,m^{t,\mu}(t_{i}^n,\cdot)+\lambda[m^{t,\mu}(t_{i+1}^n,\cdot)-m^{t,\mu}(t_{i}^n,\cdot)])(\xi)\\
	&\qquad-\frac{\dd f}{\dd \mu}(t_i^n,x,m^{t,\mu}(t_{i}^n,\cdot))(\xi)\Big]\cdot\frac{\dd m^{t,\mu}}{\dd r}(r,\xi)d\lambda d\xi dr\Big|\\
	&\le C(\gamma,L,T,\|\mu\|_{W^{3,2}(\br)})2^{-\frac{n}{2}}\int_{t}^{s}\|\frac{\dd m^{t,\mu}}{\dd r}(r,\cdot)\|_{L^2(\br)} dr\\
	&\le  C(\gamma,L,T,\|\mu\|_{W^{3,2}(\br)})2^{-\frac{n}{2}}\stackrel{n\to+\infty}{\longrightarrow}0.
\end{align*}
Up to now, we have
\begin{equation}\label{forall_2}
	\begin{split}
		&f(s,x,m^{t,\mu}(s,\cdot))-f(t,x,\mu)\\
		&\qquad=\int_t^s \Big[\frac{\dd f}{\dd r}(r,x,m^{t,\mu}(r,\cdot))+\int_{\br}\frac{\dd f}{\dd \mu}(r,x,m^{t,\mu}(r,\cdot))(\xi)\frac{\dd m^{t,\mu}}{\dd r}(r,\xi)d\xi \Big]ds.
	\end{split}
\end{equation}
We now set $\phi (s,x):=f(s,x,m^{t,\mu}(s,\cdot))$. Then,
\begin{align*}
	\phi(s,X_s^{t,x,\mu})=f(s,X_s^{t,x,\mu},m^{t,\mu}(s,\cdot)),\quad \phi(t,x)=f(t,x,\mu).
\end{align*}
From \eqref{forall_2} we know that
\begin{align*}
	\frac{\dd\phi}{\dd s}(s,x)=\frac{\dd f}{\dd s}(s,x,m^{t,\mu}(s,\cdot))+\int_{\br}\frac{\dd f}{\dd \mu}(s,x,m^{t,\mu}(s,\cdot))(\xi)\frac{\dd m^{t,\mu}}{\dd s}(s,\xi)d\xi.
\end{align*}
Therefore, by applying classical It\^o's formula for $\phi(s,X_s^{t,x,\mu})$, we have \eqref{forall_1}.

\bibliographystyle{amsplain}

\begin{thebibliography}{10}
\bibitem{ARA}{R. A. Adams} and {J. J. F. Fournier}, \textit{Sobolev Spaces}, 2nd ed, Elsevier, 2003.
\bibitem{BAN}{D. Ba\~nos}, \textit{The Bismut-Elworthy-Li formula for mean-field stochastic differential equations}, Ann. I. H. Poincar\'e-Pr., Institut Henri Poincar\'e, \textbf{54} (2018), no. 1, 220-233.
\bibitem{BAR}{V. Barbu} and {M. R\"ockner}, \textit{From nonlinear Fokker–Planck equations to solutions of distribution dependent SDE}, Ann. Probab. \textbf{48} (2020), no. 4, 1902-1920.
\bibitem{AB0}{A. Bensoussan}, {J. Frehse} and {S. C. P. Yam}, \textit{Mean field games and mean field type control theory}, vol. 101, New York: Springer, 2013.
\bibitem{AB2}{A. Bensoussan}, {J. Frehse} and {S. C. P. Yam}, \textit{The master equation in mean field theory}, J. Math. Pures Appl. \textbf{103} (2015), no. 6, 1441-1474.
\bibitem{AB4}{A. Bensoussan}, {J. Frehse} and {S. C. P. Yam}, \textit{On the interpretation of the master equation}, Stoch. Proc. Appl. \textbf{127} (2017), no. 7, 2093 – 2137.
\bibitem{AB3}{A. Bensoussan}, {K. C. J. Sung}, {S. C. P. Yam} and {S. P. Yung}, \textit{Linear-quadratic mean field games}, J. Optimiz. Theory App. \textbf{169} (2016), no. 2, 496-529.
\bibitem{BR}{R. Buckdahn}, {J. Li}, {S. Peng} and {C. Rainer}, \textit{Mean-field stochastic differential equations and associated PDEs}, Ann. Probab. \textbf{45} (2017), no. 2, 824-878.
\bibitem{PC}{P. Cardaliaguet}. \textit{Notes on mean field games}, From P. L. Lions' lectures at College de France, 2010.
\bibitem{PC1}{P. Cardaliaguet}, {F. Delarue}, {J. M. Lasry} and {P. L. Lions}, \textit{The master equation and the convergence problem in mean field games: (AMS-201)}, Annals of Mathematics Studies, Princeton University Press, 2019.
\bibitem{CAR}{R. Carmona} and {F. Delarue},  \textit{Probabilistic Theory of Mean Field Games with Applications I: Mean Field FBSDEs, Control, and Games}, Probability Theory and Stochastic Modelling, Springer International Publishing, 2018.
\bibitem{CHA}{J. F. Chassagneux}, {D. Crisan} and {F. Delarue}, \textit{A probabilistic approach to classical solutions of the master equation for large population equilibria}, arXiv preprint arXiv:1411.3009, 2014.
\bibitem{CHA1}{J. F. Chassagneux}, {L. Szpruch} and {A. Tse}, \textit{Weak quantitative propagation of chaos via differential calculus on the space of measures}, arXiv preprint arXiv:1901.02556, 2019
\bibitem{CD}{D. Crisan} and {E. McMurray}, \textit{Smoothing properties of McKean-Vlasov SDEs}, Probab. Theory Rel. \textbf{171} (2018), no. 1, 97-148.
\bibitem{de1}{P. E. C. de Raynal}, \textit{Strong well posedness of McKean–Vlasov stochastic differential equations with H\"older drift}, Stoch. Proc. Appl. \textbf{130} (2020), no. 1, 79-107.
\bibitem{de}{P. E. C. de Raynal} and {N. Frikha}, \textit{Well-posedness for some non-linear diffusion processes and related PDE on the Wasserstein space}, arXiv preprint arXiv:1811.06904, 2018.
\bibitem{F.D}{F. Delarue} and {G. Guatteri}, \textit{Weak existence and uniqueness for forward–backward SDEs}, Stoch. Proc. Appl. \textbf{116} (2006), no. 12, 1712-1742.
\bibitem{ELC}{L. C. Evans}, \textit{Partial Differential Equations}, 2nd ed \textbf{19} (2010), AMS, Providence, R.I.
\bibitem{KVN}{V. N. Kolokoltsov}, \textit{Nonlinear Markov Processes and Kinetic Equations}, \textbf{182} (2010), Cambridge University Press.
\bibitem{LO}{O. A. Ladyženskaja}, {V. A. Solonnokov} and {N. N. Ural'ceva}, \textit{Linear and Quasilinear Equations for Parabolic Type, Translations of Mathematical Monographs}, \textbf{23} (1967). AMS, Providence, R.I.
\bibitem{PL}{P. L. Lions}, \textit{Cours au collège de France}, 2014, http://www.college-de-france.fr/site/pierre-louis-lions/seminar-2014-11-14-11h15.htm.
\bibitem{TSE}{A. Tse}, \textit{Higher order regularity of nonlinear Fokker-Planck PDEs with respect to the measure component}, J. Math. Pure. Appl. \textbf{150} (2021), 134-180.
\bibitem{WFY}{F. Wang}, \textit{Regularity estimates and intrinsic-Lions derivative formula for singular McKean-Vlasov SDEs}, arXiv preprint arXiv:2109.02030, 2021.
\bibitem{Zhang}{P. Xia}, {L. Xie}, {X. Zhang} and {G. Zhao}, \textit{Lq (Lp)-theory of stochastic differential equations}, Stoch. Proc. Appl. \textbf{130} (2020), no. 8, 5188-5211.
\end{thebibliography}

\end{document}